\theoremstyle{definition} 
\newtheorem{definition}{Definition}[section]
\newtheorem{remark}[definition]{Remark}
\newtheorem{example}[definition]{Example}
\theoremstyle{plain}      
 \newtheorem{Definition}[definition]{Definition}
 \newtheorem{proposition}[definition]{Proposition}
 \newtheorem{theorem}[definition]{Theorem}
 \newtheorem{corollary}[definition]{Corollary}
 \newtheorem{lemma}[definition]{Lemma}
\newtheorem*{theorem*}{Theorem}
\newcommand{\N}{\mathbf N}
\newcommand{\R}{\mathbf R}
\renewcommand{\H}{\mathbf H}
\newcommand{\F}{\mathbf F}
\newcommand{\simplelimit}{\xrightarrow[]{\mathrm{c.s.}}}
\DeclareMathOperator{\Aut}{Aut}
\DeclareMathOperator{\Isom}{Isom}
\DeclareMathOperator{\Cone}{Cone}
\DeclareMathOperator{\CAT}{CAT}
\DeclareMathOperator{\Hdim}{Hdim}
\DeclareMathOperator{\Repr}{Rep}
\newcommand{\RepX}{\Repr_X}
\newcommand{\Schottkyspace}{\Repr^{\mathrm{Schottky}}}
\newcommand{\SchottkyspaceX}{\RepX^{\mathrm{Schottky}}}
\DeclareMathOperator{\diam}{diam}
\newcommand{\omegalim}{\omega-\lim}
\begin{document}

\author{G. Courtois\thanks{IMJ-PRG, Sorbonne Université and Université Paris Cité,
    CNRS, \texttt{gilles.courtois@imj-prg.fr}; work partially funded by ANR-23-CE40-0012-03 \href{https://hilbertxfield.pages.math.cnrs.fr/}{HilbertXField}} \and A. Guilloux\thanks{IMJ-PRG and OURAGAN, Sorbonne Université and Université Paris Cité,
    CNRS, INRIA, \texttt{antonin.guilloux@imj-prg.fr}; work partially funded by ANR-23-CE40-0012-03 \href{https://hilbertxfield.pages.math.cnrs.fr/}{HilbertXField}}}

\title{Hausdorff dimension, diverging Schottky representations and the infinite dimensional hyperbolic space}

\date{}

\maketitle

\begin{abstract}
One of our main goals in this paper is to understand the behavior of limit sets of a diverging sequence of Schottky groups in $\Isom\,\H^N$. This leads us to a generalization of a classical theorem of Bowen on variations of Hausdorff dimension of limit sets; and to a method of transforming a diverging sequence of Schottky groups in $\Isom\,\H^N$ into an almost converging sequence in $\Isom\,\H^\infty$. Our results apply in particular to an example of McMullen and generalize a previous work by Mehmeti and Dang. 
\end{abstract}

\section{Introduction}

One of our main goals in this paper is to understand the behavior of limit sets of a diverging sequence of Schottky groups in  $\Isom\,\H^N$. 

Schottky groups are generated by a finite set
of hyperbolic isometries $S = \{s_1, ... , s_r \}$, $2 \leq r$, of $\Isom\,\H^N$ such that there exists open subsets with disjoint closure
$(D_{i} ^{\pm})_{1\leq i \leq r}$ in $\H^N \cup \partial \H^N$ satisfying 
\begin{equation}\label{schottkydisk}
s_{i}^{\pm} (D_{i}^{\pm}) \subset D_{i}^{\mp}.
\end{equation}
By the classical 'ping-pong' argument, such a group is discrete and free. It may be then considered as the image $\rho (\F_r)$ of the free group on
$r$ generators by a discrete and faithful representation
$\rho : \F _r \rightarrow  \Isom\,\H^N$. 

In this paper, the space of representations of a finitely generated group $\Gamma$ into the isometry group $\Isom\, X$ of a metric space is denoted by $$\RepX(\Gamma).$$ Moreover, we denote by $\Schottkyspace_{\H^N} (\F_r)$ the subspace of $\Repr_{\H^N}(\Gamma)$ such that $\rho (\F_r)$ is a Schottky group. For any
$\rho$ in $\Schottkyspace_{\H^N} (\F_r)$, we denote by
$\Lambda _\rho$ the limit set of $\rho (\F _r)$. The Hausdorff dimension $\Hdim \Lambda_\rho$ of a subset $\Lambda \subset \partial\H ^N$ is computed with 
respect to the visual distance at a base point $o\in \H ^N$ on $\partial \H ^N$, see \Cref{sec:visualmetric}.

The variations of the limit set $\Lambda_\rho$ and its Hausdorff dimension $\Hdim \Lambda_\rho$ ,when $\rho$ varies in 
$\Schottkyspace_{\H^N} (\F_r)$, has been studied thoroughly. 
The following celebrated continuity result is a consequence of the work of Bowen \cite{Bowen-IHES}. 
\begin{theorem}[\cite{Bowen-IHES}]\label{bowen}
Let $\rho _k$, $k\in \N$, and $\rho$ be Schottky representations in the space
$\Schottkyspace_{\H^N} (\F_{r})$. We assume that 
$\lim_k \rho _k (g) =\rho (g)$ for every 
$g\in \F _r$. Then, 
$$
\lim _k \Hdim \Lambda _{\rho _k} = \Hdim \Lambda _\rho .
$$
\end{theorem}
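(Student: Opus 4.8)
The plan is to use the thermodynamic formalism: encode the limit set of a Schottky group via the symbolic dynamics of the free group, so that $\Hdim\Lambda_\rho$ is characterized as the unique zero of a pressure function, and then show this zero varies continuously with $\rho$.

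\textbf{Step 1: Symbolic coding and the Poincaré series.} First I would fix the free generating set $S=\{s_1,\dots,s_r\}$ and recall that, thanks to the ping-pong dynamics, every reduced word $g=a_1\cdots a_n$ in $\F_r$ (with $a_i\in\{s_1^{\pm},\dots,s_r^{\pm}\}$) nests a shrinking sequence of the disks $D_i^{\pm}$, so the limit set $\Lambda_\rho$ is naturally parametrized by the space of infinite reduced words, i.e.\ a subshift of finite type $\Sigma$ on the alphabet of $2r$ letters with the single forbidden rule ``$a$ followed by $a^{-1}$''. For each $\rho$ one gets a Hölder potential $\varphi_\rho\colon\Sigma\to\R$ by $\varphi_\rho(\xi)=-\log|(\rho(a_1)^{-1})'(\rho(a_2)\rho(a_3)\cdots\,\xi)|$ measuring the derivative (in the visual metric at $o$) of the corresponding Möbius-type contraction; the key geometric input is that $\rho(g)$ contracts $D$ at an exponential rate comparable to $e^{-d(o,\rho(g)o)}$, uniformly over compact families of $\rho$. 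The Bowen–Series / Poincaré series $P_\rho(s)=\sum_{g\in\F_r}e^{-s\,d(o,\rho(g)o)}$, and more precisely its dynamical counterpart $\sum_g \sup|({\rho(g)})'|^s$ over the tree of admissible words, converges for $s>\delta(\rho)$ and diverges for $s<\delta(\rho)$, where $\delta(\rho)$ is the critical exponent; by Bowen's theorem (the original statement in \cite{Bowen-IHES}) one has $\Hdim\Lambda_\rho=\delta(\rho)$, and $\delta(\rho)=s$ is equivalent to $\Pr(\Sigma,s\varphi_\rho)=0$ where $\Pr$ is the topological pressure.

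\textbf{Step 2: Continuity of the pressure.} Next I would invoke the standard fact that for a subshift of finite type the pressure function $\Pr(\Sigma,\cdot)$ is continuous — indeed real-analytic — with respect to the Hölder norm on the potential, and that $s\mapsto\Pr(\Sigma,s\varphi_\rho)$ is strictly decreasing (with derivative $-\int|\varphi_\rho|\,d\mu_s<0$), so its unique zero depends continuously on the potential. It therefore suffices to show that $\rho\mapsto\varphi_\rho$ is continuous in, say, the $C^0$ norm (Hölder control being uniform on compacta by the uniform contraction estimate). Since $\rho_k(g)\to\rho(g)$ for the finitely many generators $g\in S$, the maps $\rho_k(a_i)$ converge to $\rho(a_i)$ in $\Isom\,\H^N$, hence uniformly on the compact set $\H^N\cup\partial\H^N$ together with their derivatives in the visual metric away from fixed points; because the cylinders $[a_2a_3\cdots]$ all lie inside the disk $D_{a_2}^{\pm}$ which has positive distance from the fixed points of $\rho(a_1)$, the derivative cocycle $\varphi_{\rho_k}$ converges uniformly to $\varphi_\rho$. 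One also must check that the subshift $\Sigma$ itself does not change — it is purely combinatorial (determined by $r$), so this is automatic, and the disjointness condition~\eqref{schottkydisk} is an open condition, so it persists for large $k$.

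\textbf{Step 3: Conclusion and the main obstacle.} Combining Steps 1 and 2: $\Hdim\Lambda_{\rho_k}$ is the zero of $s\mapsto\Pr(\Sigma,s\varphi_{\rho_k})$, these functions converge uniformly on compact $s$-intervals to $s\mapsto\Pr(\Sigma,s\varphi_\rho)$, the limiting function has a transversal (strictly negative slope) zero at $\Hdim\Lambda_\rho$, and an elementary argument (or the implicit function theorem) gives convergence of the zeros, which is exactly $\lim_k\Hdim\Lambda_{\rho_k}=\Hdim\Lambda_\rho$. The step I expect to be the main obstacle is Step~1 in the regime where the Schottky disks $D_i^{\pm}$ are allowed to touch the boundary $\partial\H^N$ (rather than sitting strictly inside $\H^N$): there the contraction of $\rho(a_i)$ on the relevant cylinder is still exponential but the Hölder exponent of $\varphi_\rho$ can degenerate near tangency points, so one must either restrict to the ``classical'' Schottky case with disks of definite mutual distance, or quotient out by a conformal change of metric that restores uniform Hölder control; making the uniform contraction estimate genuinely uniform over the sequence $(\rho_k)$ — not just for each fixed $\rho_k$ — is the technical heart of the matter and is where Bowen's original hypotheses are really used.
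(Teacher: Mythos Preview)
Your approach is essentially the same as the paper's, which proves this statement as the $X=\H^N$ case of its more general Theorem~\ref{thm:bowen-CAT(-1)}: symbolic coding of $\Lambda_\rho$ by $\partial\F_r$, a H\"older potential (the paper writes it via Busemann functions, $f_\rho(\zeta)=B(\rho(\sigma_1)o,o,\tau_\rho(\zeta))$, which in $\H^N$ coincides with the log-conformal-derivative you use), identification of $\Hdim\Lambda_\rho$ with the associated Gibbs/pressure exponent (Proposition~\ref{prop:delta_rho}), and continuity of that exponent under uniform convergence of the potential (Proposition~\ref{prop:convergenceLimitSets} and the second part of Theorem~\ref{gibbs}). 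Your anticipated ``main obstacle'' about degenerating H\"older constants is handled in the paper simply by the uniform quasi-isometry constant supplied by Theorem~\ref{thm:quasi_isometric_open}, i.e.\ the Schottky condition being open.
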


One motivating question for this paper is the following: what happens when the sequence $\rho_k$ is diverging? In the sequel, we study the asymptotic behavior of $\Hdim \Lambda _{\rho _k}$ in the general situation where 
$\rho _k \in \Schottkyspace_{\H^N} (\F_{r})$ diverges, i.e. the minimal joint displacement $r_k$ of the generators goes to $+\infty$ (see Section \ref{diverging} for a precise definition), where:
$$
r_k := \inf _{x\in \H^N} \max _{1\leq j \leq r} d(x, \rho _k (s_j) x).
$$ 
In this case, we expect
that each Schottky set $D_i^{\pm}$ as defined in (\ref{schottkydisk}) tends to a point and 
that the Hausdorff dimension $\Hdim \Lambda _{\rho _k}$ tends to $0$. We want 
to give an asymptotic expansion for the sequence.

This question will lead us to a generalization of Bowen \Cref{bowen}, see \Cref{thm:bowen-CAT(-1)}; and to a method of transforming a diverging sequence of Schottky groups in $\Isom\,\H^N$ into an almost converging sequence in $\Isom\,\H^\infty$, see \Cref{thm:intro-partial-hausdorff-convergence}. 

\subsection{Revisiting an example of McMullen}

Our results apply in particular to an example of McMullen, that can be used as a guideline to our paper.  This example indeed describes such an asymptotic. 
\begin{example}[McMullen example]\label{ex:mcmullen}
In \cite{McM}, C. McMullen studies the family of discrete groups $G _\theta < \Isom\,\H ^2$ generated by three reflections $\sigma_1$, $\sigma_2$ and $\sigma_3$
in three disjoint symmetric circles, each orthogonal to $\partial \H^2$
in an arc of length $0<\theta <2\pi /3$. The index $2$ subgroup $\Gamma _\theta < G_\theta$ of orientation preserving isometries of $G_\theta$ is a Schottky group on two generators 
$\rho _\theta (s_1):= \sigma_1\, \sigma_2$ and $\rho _\theta (s_2):= \sigma_1 \, \sigma_3$.
When $\theta$ goes to $0$, the family of representations $\rho _\theta$ of the free group on two generators such that 
$\rho _\theta (\F _2) = \Gamma _\theta$ is diverging and McMullen shows that 
\begin{equation}\label{McMullenexample}
\Hdim \Lambda _{\rho _{\theta}} \sim \frac{\log 2}{2 |\log \theta |}.
\end{equation}
\end{example}
Drawing a parallel with \Cref{bowen}, we need a limiting representation to describe the asymptotic. A classical construction due to Morgan-Shalen \cite{MorganShalen}, through an ultrafilter, renormalization and asymptotic cones, see \Cref{diverging}, gives that the sequence $(\H ^N, r_{k}^{-1} d)$ converges to an action of $\F_r$ on a minimal tree $(T, d_T)$ in the equivariant Gromov-Hausdorff topology. We denote by $\rho_\infty : \F_r \to \Isom\,T$ the associated representation.

Let us assume, as in \Cref{bowen} for converging sequences, that the limiting representation $\rho_\infty$ is a Schottky representation, with the same definition as for representations into $\Isom\,\H^n$ and the disc $D_i$'s being inside $\partial T$. Let $\Lambda_\infty$ be the limit set of $\rho _{\infty} (\F_r ) \subset \partial T$. The tree is a $\CAT(-1)$-metric space and its boundary $\partial T$ may naturally be equipped with a visual distance $d_p$ (for a given base-point $p\in T$), see \Cref{sec:visualmetric}.
For a subset $\Lambda \subset \partial T$, we still denote by $\Hdim \Lambda$ the Hausdorff dimension of $\Lambda$ with respect to the distance $d_p$. 
The following Theorem is one of the main result in this paper and explains the asymptotic expansion in McMullen example:
\begin{theorem}\label{hausdimasymp}
Let $(\rho _k)_{k\in \N}$ be a sequence of diverging representations in the space
$\Repr_{\H^N} (\F_{r})$ with joint 
displacement $r_k$. Assume that the limit action of $\rho _\infty (\F _\nu)$ on the minimal tree $T$ is Schottky. Then, up to a subsequence,
$$
\lim _k \; r_k \cdot\Hdim \Lambda _{\rho _k} = \Hdim \Lambda _{\rho _\infty}.
$$
\end{theorem}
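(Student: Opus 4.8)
\emph{Proof plan.} The first move is a rescaling observation that turns the statement into a Bowen-type continuity statement. Write $X_k$ for $\H^N$ equipped with the rescaled metric $r_k^{-1}d$; this is again a $\CAT(-1)$ space (indeed $\CAT(-r_k^{2})$), on which $\rho_k$ acts with minimal joint displacement normalized to $1$, and its boundary at infinity is $\partial\H^N$ as a set. Since Gromov products scale by $r_k^{-1}$, the visual distance $d_o^{(k)}$ of $X_k$ at $o$ is the snowflake $d_o^{\,1/r_k}$ of the visual distance $d_o$ of $\H^N$ at $o$. Consequently $\Hdim_{d_o^{(k)}}\Lambda = r_k\cdot\Hdim_{d_o}\Lambda$ for every $\Lambda\subset\partial\H^N$, and in particular
\[
r_k\cdot\Hdim\Lambda_{\rho_k}\;=\;\Hdim\,(\Lambda_{\rho_k},d_o^{(k)}).
\]
So the theorem asserts exactly that the Hausdorff dimension of the Schottky limit set of $\rho_k$, measured in the \emph{rescaled} ambient space $X_k$, converges to $\Hdim\Lambda_{\rho_\infty}$ — a continuity statement for a sequence of $\CAT(-1)$ spaces $X_k$ \emph{degenerating} to the tree $T$, rather than for a sequence of representations inside a fixed space.

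Next I would record the convergence of the relevant data along a subsequence. By the Morgan--Shalen construction recalled in \Cref{diverging}, after passing to a subsequence the pointed spaces $(X_k,o)$ converge in the equivariant Gromov--Hausdorff topology to $(T,d_T,p)$ with $\F_r$ acting by $\rho_\infty$, and, restricted to the limit sets, the visual distances $d_o^{(k)}$ converge to $d_p$. The point to check is that the \emph{Schottky structures} converge. As $\rho_\infty$ is Schottky, each $\rho_\infty(s_i)$ is hyperbolic, and one may fix discs $D_i^{\pm,\infty}\subset\partial T$ with pairwise disjoint closures separated by a definite gap. Because the translation lengths of $\rho_k(s_i)$ stay bounded away from $0$ (they are close to those of $\rho_\infty(s_i)$) and hyperbolic isometries have North--South dynamics at a rate that can be taken uniform in $k$, one produces for all large $k$ discs $D_i^{\pm,k}$ in $X_k\cup\partial X_k$, with pairwise disjoint closures, a uniform gap, and $\rho_k(s_i^{\pm})(D_i^{\pm,k})\subset D_i^{\mp,k}$, converging to $D_i^{\pm,\infty}$. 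These need not be the discs originally witnessing $\rho_k\in\Schottkyspace_{\H^N}(\F_r)$, which is harmless since $\Hdim\Lambda_{\rho_k}$ does not depend on the choice; in passing this also shows that ``$\rho_k$ Schottky'' is automatic for $k$ large once $\rho_\infty$ is Schottky.

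With this in hand, the conclusion follows from the generalized Bowen theorem \Cref{thm:bowen-CAT(-1)}: the equivariant Gromov--Hausdorff convergence $X_k\to T$ together with the convergence of the Schottky data gives $\Hdim(\Lambda_{\rho_k},d_o^{(k)})\to\Hdim(\Lambda_{\rho_\infty},d_p)$, which combined with the first paragraph is the claim. Should one prefer to argue directly rather than invoke \Cref{thm:bowen-CAT(-1)} in this degenerating form, the substitute is classical Bowen machinery: code $\Lambda_{\rho_k}$ and $\Lambda_{\rho_\infty}$ by infinite reduced words, write $\Hdim(\Lambda_{\rho_k},d_o^{(k)})$, resp. $\Hdim\Lambda_{\rho_\infty}$, as the unique zero $s_k$, resp. $s_\infty$, of a pressure equation $\mathcal P_k(s)=0$, resp. $\mathcal P_\infty(s)=0$, built from $-s$ times the symbolic length cocycle on the space of words (logarithms of ratios of diameters of nested Schottky discs), deduce from the convergence of the Schottky data that $\mathcal P_k\to\mathcal P_\infty$ uniformly on compacts (the cocycles converging uniformly, the shift combinatorics being fixed), and conclude $s_k\to s_\infty$ since $\mathcal P_\infty$ is strictly decreasing with a simple zero.

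I expect the main obstacle to be precisely the uniformity implicit in the second paragraph. Equivariant Gromov--Hausdorff convergence only yields, a priori, that $r_k^{-1}d(o,\rho_k(g)o)\to d_T(p,\rho_\infty(g)p)$ for each \emph{fixed} $g\in\F_r$, with no control of the rate as the word length of $g$ grows; what the argument needs is convergence of the rescaled length cocycle \emph{uniformly over all reduced words}, uniformly in $k$, so that the pressures — which see only exponential growth rates — genuinely converge. Establishing this amounts to transporting, with uniform constants, the uniform hyperbolicity of the ping-pong dynamics from the limit tree $T$ back to the spaces $X_k$, i.e. to controlling how tree-like $X_k$ is at the scale of the Schottky discs, uniformly in $k$. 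This is the technical core of the proof and the reason for passing to a subsequence.
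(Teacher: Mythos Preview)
Your rescaling observation in the first paragraph is exactly right and matches the paper's Eq.~\eqref{eq:relations-Hdim}. The gap is in the third paragraph: you cannot invoke \Cref{thm:bowen-CAT(-1)} ``in this degenerating form'', because that theorem is stated for a \emph{single fixed} ambient space $X$, with all $R_k$ and $R_\infty$ taking values in $\Isom\,X$ and the orbit maps converging pointwise \emph{inside} $X$. Equivariant Gromov--Hausdorff convergence of $X_k$ to $T$ does not place you in that setting, and there is no version of the theorem for a varying sequence of spaces. You recognize this yourself in the last paragraph, naming the uniformity obstacle correctly, but you do not indicate how to overcome it; ``transporting uniform hyperbolicity back from $T$ to the $X_k$'' is precisely the hard part, and passing to a subsequence alone does not produce it.

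The paper's resolution is different in kind from what you sketch. Rather than extracting uniform estimates across the family $X_k$, it \emph{embeds} all the $X_k$ and $T$ into a single complete $\CAT(-1)$ space, namely $\H^\infty$, using the Monod--Py kernels of hyperbolic type (\Cref{sec:EmbeddingKernel}): the maps $f_{r_k^{-1}}:\H^N\to\H^\infty$ are $(r_k^{-1},\ln 2)$-quasi-isometries, so they realize the rescaling you want while landing in a fixed target. After conjugating by suitable isometries $\mathcal I_k$ of $\H^\infty$ (\Cref{thm:partial-hausdorff-convergence}), one obtains representations $R_k,R_\infty:\F_r\to\Isom\,\H^\infty$ whose orbit maps genuinely converge pointwise in $\H^\infty$, and \Cref{thm:bowen-CAT(-1)} applies verbatim. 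The uniformity you were missing is now automatic: once the orbit maps converge in a fixed space and $R_\infty$ is quasi-isometric, \Cref{thm:quasi_isometric_open} gives a \emph{uniform} quasi-isometry constant for all large $k$, and the whole Bowen machinery (\Cref{sec:Bowen-extension}) runs with constants depending only on that $K$. Your direct pressure-function route could in principle be made to work, but you would still need exactly this uniform control of the length cocycle, and the paper's point is that the $\H^\infty$ embedding is what supplies it.
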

\begin{remark}
We will give two different interpretations of the expression \emph{up to a subsequence}. Indeed, as already briefly mentioned, the actual construction of $\rho_\infty$ depends on a choice of an ultrafilter $\omega$, see \Cref{sec:asymptotic-cones}. We will prove first that $\Hdim \Lambda _{\rho _\infty}$ is the $\omega$-limit of  $r_k \cdot\Hdim \Lambda _{\rho _k}$, see \Cref{hausdimasymp-ultrafiltre}.
But, then, in \Cref{prop:convergent_subsequences}, we will explain how to construct an actual subsequence with convergence. Still, this subsequence is not very natural because we cannot guarantee that its set of indices belongs to the ultrafilter $\omega$, see \Cref{sec:convergent_subsequences}.
\end{remark}
\begin{remark}
The question of the asymptotic expansion for Hausdorff dimension of limit sets of diverging Schottky groups has already been studied. In particular, the previous Theorem \ref{hausdimasymp} has been proved by V. Mehmeti and N.B. Dang in the case $N=2, 3$, \cite{MD}, under a slightly more stringent assumption on $\rho_\infty$. The proof, though in several points close to ours, goes through the theory of analytic spaces of Berkovich to deal with the convergence, where we will mostly use non locally compact $\CAT(-1)$-spaces, such as the (unique up to isometry) separable infinite dimensional hyperbolic space $\H^\infty$, see \Cref{sec:GromovBusemann}.
\end{remark}
Let us now come back to McMullen example:
\begin{example}[McMullen example, continued]
In the above example $\rho _\theta$ of McMullen, we can compute that the joint displacement of $\rho_\theta$ verifies: $$r_\theta \sim 4 | \log (\theta/2) |.$$
Notice that the limit group $\rho _\infty (\F _2)$ acts on a trivalent tree $T$ with 
quotient a graph with two vertices joined
by three edges $\alpha, \beta, \gamma$ of length $1/2$ where the loops $\gamma .\alpha$ and $\gamma . \beta$
generate $\rho _\infty (\F _2)$, see \Cref{fig:McMullen}. 
\begin{figure}[t]
\centering 
    \includegraphics[width=10cm]{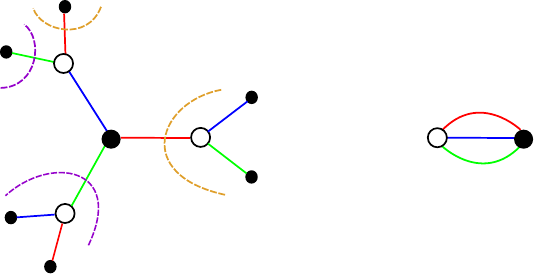}

    \caption{McMullen example: Schottky action of  $\rho_\infty$ on its minimal tree (left); quotient of this action (right).}
    \label{fig:McMullen}
\end{figure}
Therefore, the limit set of 
$\rho _\infty (\F _2)$ coincides with $\partial T$ and its Hausdorff dimension satisfies 
$\Hdim \Lambda _{\rho _\infty} = 2 \log 2$.  Theorem \ref{hausdimasymp}, or more precisely its version \Cref{prop:convergent_subsequences}, states that (see \Cref{ex:mcmullen3}):
$$
\lim _{\theta\to 0} r_\theta \Hdim \Lambda _{\rho _\theta} = 
\lim _{\theta\to 0} 4 |\log (\theta /2)| \Hdim \Lambda _{\rho _{\theta\to 0}} =
\Hdim \Lambda _{\rho _\infty} = 2\log 2
$$
and we recover the estimate \eqref{McMullenexample} of McMullen:
$\Hdim \Lambda _{\rho _{\theta}} \sim_{\theta\to 0} \frac{\log 2}{2 |\log \theta |}$.
\end{example}

We prove \Cref{hausdimasymp} following closely Bowen's method, see \Cref{sec:Bowen-extension}. The main obstacle for doing
so is that the limit sets $\Lambda_\infty$ and $\Lambda_k$'s do not live in the same space, which seems 
to rule out all approximation arguments needed. So our strategy has two steps, which are mainly independent and have each their own interest:
\begin{enumerate}
    \item The first step is to actually transform the Gromov-Hausdorff convergence of $\rho_k$ to $\rho_\infty$ into a weak kind of convergence of a sequence $R_k : \F_r\to \Isom\,\H^\infty$ to $R_\infty: \F_r\to \Isom\,\H^\infty$, see \Cref{intro-embedding} below for more details. Moreover, still denoting by $r_k$ the joint displacement of $\rho_k$, we will have the following formula for the Hausdorff dimensions of limit sets:
    \begin{equation}\label{eq:relations-Hdim}
        r_k\Hdim \Lambda_k = \Hdim \Lambda_{R_k}  \quad \textrm{and} \quad \Hdim \Lambda_\infty  = \Hdim \Lambda_{R_\infty} 
    \end{equation}
    Beware that in these equations, the $\Lambda_k$'s are inside $\partial \H^N$, $\Lambda_\infty$ inside $\partial T$ and all the $\Lambda_{R_k}$ inside $\partial \H^\infty$.
    \item The second step is to state and prove a general version of Bowen that encompass our weak type of convergence. It is best stated in the realm of non locally compact $\CAT(-1)$-spaces, see \Cref{intro-bowen}. It implies the convergence, up to a subsequence: 
    \begin{equation}\label{eq:Bowen-convergence}
        \Hdim \Lambda_{R_k} \to \Hdim \Lambda_{R_\infty}.
    \end{equation}
\end{enumerate}
Eqs. \eqref{eq:relations-Hdim} and \eqref{eq:Bowen-convergence} together give \Cref{hausdimasymp}, see \Cref{sec:hausdimasymp} for more details. We now describe more thoroughly those two steps. We choose to begin with the second step, to reflect the structure of our paper.

\subsection{Bowen Theorem for non locally compact $\CAT(-1)$-spaces}\label{intro-bowen}

\Cref{sec:Bowen-extension} of this paper will be devoted to prove the extension of Bowen's Theorem. We will work in the quite general setting of complete $\CAT(-1)$-spaces without assuming local compactness. One example of such a space is the infinite dimensional separable hyperbolic space $\H^\infty$, see \Cref{H-infini}. Needed facts about $\CAT(-1)$-geometry are reviewed in \Cref{CAT(-1)}.
Matters of convergence, discreteness, convex-cocompactness and related topics for the isometry group of such a space can be delicate \cite{DSU,Duchesne,Xu}. A non trivial notion is the one of \emph{Schottky representations}. We define it in \Cref{def:Schottky} as quasi-isometric representations of $\F_r$ to $\Isom\, X$, i.e. representations $\rho$ such that the orbit map $\tau_\rho: \F_r\to X$, defined by $\tau_\rho(g)=\rho(g)\cdot o$ for all $g\in \F_r$, is a quasi isometric embedding\footnote{The classical notion of $K$-quasi isometric embedding is recalled in \Cref{def:convex_cocompact}.} of the (vertices of the) Cayley graph of $\F_r$ to $X$. The space of such representations is denoted by $\SchottkyspaceX(\F_r)$. 
We prove the following, without assuming $X$ to be locally compact.
\begin{theorem}[Bowen theorem for $\CAT(-1)$-spaces]\label{thm:bowen-CAT(-1)}
Let $(X,d,o)$ be a complete pointed $\CAT(-1)$-space; consider a sequence $(R_k)_{k\in \N}$ in $\RepX(\F_r)$ and $R_\infty\in \SchottkyspaceX(\F_r)$ a Schottky representation.
Assume that the sequence of orbit maps $(\tau_k)_{k\in \N}$ converges point-wise to $\tau_\infty$: 
    $$\forall g\in\F_r,\quad \tau_k(g) = R_k(g) \cdot o \xrightarrow{k \to \infty} \tau_\infty(g) = R_\infty(g) \cdot o.$$   
Then we have:
\begin{enumerate}
    \item The sequence of limit sets $(\Lambda_k)_{k\in \N}$ of $(R_k)_{k\in \N}$ converges, in the Hausdorff topology of $\partial X$, to the limit set $\Lambda_\infty$ of $R_\infty$.
    \item The sequence $\Hdim \Lambda_k$ converges to $\Hdim \Lambda_\infty$.
\end{enumerate}
\end{theorem}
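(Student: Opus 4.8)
The plan is to argue in three largely independent steps, the first of which I expect to be the crux. Step~1 converts the very weak hypothesis --- pointwise convergence of orbit maps, which in a non locally compact $\CAT(-1)$-space carries no compactness at all --- into a \emph{uniform} statement: $R_k\in\SchottkyspaceX(\F_r)$ for all large $k$, with a quasi-isometry constant independent of $k$. Step~2 deduces from this uniformity that the boundary extensions $\partial\tau_k\colon\partial\F_r\to\partial X$ converge \emph{uniformly} to $\partial\tau_\infty$, which yields assertion~(1). Step~3, where Bowen's method proper enters, upgrades this to convergence of Hausdorff dimensions by comparing Poincar\'e series. The design principle throughout is that pointwise convergence of $\tau_k$ is automatically uniform on any \emph{fixed finite} ball of $\F_r$, so every argument must be arranged so that only such finite balls are ever tested.

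\textbf{Step 1 (uniform Schottky constant).} A $\CAT(-1)$-space is $\delta_X$-hyperbolic (see \Cref{CAT(-1)}), so one may invoke the classical local-to-global property of quasi-geodesics in hyperbolic spaces: given $(\lambda,\varepsilon)$ there is a scale $L=L(\lambda,\varepsilon,\delta_X)$ such that any path which is a $(\lambda,\varepsilon)$-quasi-geodesic in restriction to every subsegment of length $\le L$ is a global $(\lambda',\varepsilon')$-quasi-geodesic. Since $R_\infty$ is Schottky, $\tau_\infty$ is a $K$-quasi-isometric embedding (\Cref{def:Schottky,def:convex_cocompact}), so the geodesic-concatenation orbit path of every reduced word of $\F_r$ is a $(\lambda,\varepsilon)$-quasi-geodesic of $X$; in particular this holds for the finitely many reduced words of length $\le N:=\lceil L\rceil$. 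Being a $(\lambda,\varepsilon)$-quasi-geodesic through prescribed points is an open condition on those (finitely many) points, and by hypothesis $\tau_k\to\tau_\infty$ on the finite ball $B_{\F_r}(N)$; hence for $k$ large the $\tau_k$-orbit path of every reduced word of length $\le N$ is a $(2\lambda,2\varepsilon)$-quasi-geodesic. By $\F_r$-equivariance every $\tau_k$-orbit path is then a $(2\lambda,2\varepsilon)$-quasi-geodesic at scale $N\ge L$, hence a global $(\lambda',\varepsilon')$-quasi-geodesic with $(\lambda',\varepsilon')$ independent of $k$. Thus $R_k\in\SchottkyspaceX(\F_r)$ with a uniform constant; in particular $d(o,\tau_k(g))\ge (\lambda')^{-1}|g|-\varepsilon'$ for all $g\in\F_r$ and all large $k$. (Alternatively one may pass to high powers $s_i^M$ of the generators to produce an honest ping-pong table for $R_\infty$, stable under small perturbation of the generators, and work throughout with the resulting finite-index Schottky subgroup, which has the same limit set and critical exponent.)

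\textbf{Step 2 (Hausdorff convergence of limit sets).} For $k$ large (and for $\infty$) the Schottky representation $R_k$ carries a continuous $\F_r$-equivariant boundary map $\partial\tau_k\colon\partial\F_r\to\partial X$ with $\Lambda_k=\partial\tau_k(\partial\F_r)$, a Cantor set. Fix $\xi\in\partial\F_r$ with length-$n$ prefix $g_n$. The uniform quasi-isometry bound from Step~1 gives, uniformly in $k$ and in $\xi$, the linear lower bound $(\partial\tau_k(\xi)\mid\tau_k(g_n))_o\ge n/\lambda'-c$ for the Gromov product (extended to $\partial X$ as in \Cref{CAT(-1)}), and likewise for $\infty$. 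For fixed $n$ there are finitely many possible prefixes $g_n$, and $\tau_k(g_n)\to\tau_\infty(g_n)$, so $(\tau_k(g_n)\mid\tau_\infty(g_n))_o\to d(o,\tau_\infty(g_n))\ge n/K-K$ as $k\to\infty$. Two applications of the $\delta_X$-inequality for Gromov products then yield, for $k$ large enough depending only on $n$, the estimate $(\partial\tau_k(\xi)\mid\partial\tau_\infty(\xi))_o\ge n/\lambda'-c'$ for \emph{all} $\xi\in\partial\F_r$. In the visual metric $d_o$ this reads $d_o\bigl(\partial\tau_k(\xi),\partial\tau_\infty(\xi)\bigr)\le c''\,e^{-n/\lambda'}$ uniformly in $\xi$; letting $n\to\infty$ shows $\partial\tau_k\to\partial\tau_\infty$ uniformly on $\partial\F_r$, and hence $d_{\mathrm{Haus}}(\Lambda_k,\Lambda_\infty)\to 0$, which is assertion~(1).

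\textbf{Step 3 (convergence of Hausdorff dimensions).} As in Bowen \cite{Bowen-IHES}, one first records that for a Schottky representation $R$ the Hausdorff dimension of $\Lambda_R$ equals the critical exponent $\delta_R$ of the Poincar\'e series $s\mapsto\sum_{g\in\F_r}e^{-s\,d(o,\tau_R(g))}$ --- the convex-cocompact case of the Patterson--Sullivan/Bishop--Jones correspondence, which holds for $\CAT(-1)$-spaces without local compactness \cite{DSU}. It then suffices to prove $\delta_{R_k}\to\delta_{R_\infty}$, for which the key input is a uniform comparison of displacements: because $\tau_\infty$ is a quasi-isometric embedding, thin triangles together with $\F_r$-equivariance give, for a reduced product $g=g^{(1)}\cdots g^{(m)}$, the quasi-additivity $d(o,\tau_\infty(g))=\sum_{j=1}^{m} d(o,\tau_\infty(g^{(j)}))+O(m)$, and likewise for $\tau_k$ with uniform constants. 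Cutting $g$ into $m=\lceil|g|/N\rceil$ blocks of length $\le N$ and using $\tau_k\to\tau_\infty$ on $B_{\F_r}(N)$ then gives: for every $\varepsilon>0$ there are $N$ and $k_0$ with
\[
\bigl|\,d(o,\tau_k(g))-d(o,\tau_\infty(g))\,\bigr|\ \le\ \varepsilon\,|g|+C_N \qquad\text{for all }g\in\F_r,\ k\ge k_0 .
\]
Together with the uniform lower bound $d(o,\tau_\infty(g))\ge K^{-1}|g|-K$, this shows the two Poincar\'e series have the same abscissa of convergence up to a multiplicative factor $1\pm\varepsilon K$, so $\delta_{R_\infty}/(1+\varepsilon K)\le\liminf_k\delta_{R_k}\le\limsup_k\delta_{R_k}\le\delta_{R_\infty}/(1-\varepsilon K)$; letting $\varepsilon\to 0$ gives $\delta_{R_k}\to\delta_{R_\infty}$, hence $\Hdim\Lambda_k\to\Hdim\Lambda_\infty$, which is assertion~(2). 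The one genuine difficulty is Step~1: extracting uniformity from a hypothesis that, in the absence of local compactness, offers control only on finitely many orbit points at a time --- once that is in place, Steps~2 and~3 are essentially bookkeeping around hyperbolic geometry and the standard thermodynamic picture.
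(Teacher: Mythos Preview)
Your Steps~1 and~2 are essentially the paper's argument. The paper records Step~1 as \Cref{thm:quasi_isometric_open} (attributed to Xu, with a sketch via the local-to-global principle of Coornaert--Delzant--Papadopoulos, exactly as you do), and Step~2 is the paper's \Cref{prop:convergenceLimitSets}, proved by the same mechanism: approximate $\tau_k(\xi)$ and $\tau_\infty(\xi)$ by the shadow of a ball around the orbit point $\tau_\infty(g_n(\xi))$, using uniform quasi-isometry plus convergence on the finite ball of radius $n$.

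Step~3 is where you genuinely diverge from the paper. The paper follows Bowen's thermodynamic formalism literally: it introduces the H\"older potential $f_\rho(\zeta)=B(\rho(\sigma_1)o,o,\tau_\rho(\zeta))$ on $\partial\F_r$, invokes the Gibbs-measure theorem to obtain a measure and an exponent $\delta_\rho$, shows via a balls-versus-cylinders comparison (\Cref{diam}, \Cref{ballestimate}) that the pushforward satisfies the Frostman-type hypothesis of \Cref{hausdorffmeasure}, hence $\Hdim\Lambda_\rho=\delta_\rho$, and then deduces $\delta_k\to\delta_\infty$ from the uniform convergence $f_k\to f_\infty$ together with the pressure-stability clause of the Gibbs theorem. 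Your route bypasses the Gibbs machinery entirely: you take $\Hdim\Lambda_R=\delta_R$ (Poincar\'e critical exponent) as a black box from \cite{DSU}, and prove $\delta_{R_k}\to\delta_{R_\infty}$ by a direct quasi-additivity/block-decomposition estimate on orbit displacements. This is correct and arguably more elementary once the $\Hdim=\delta$ identity is granted; the paper's approach, by contrast, is more self-contained (it \emph{proves} that identity in \Cref{prop:delta_rho}) and stays closer to Bowen's original thermodynamic picture, at the cost of importing the Gibbs/pressure formalism.
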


The assumption in the previous Theorem is a consequence of the convergence of $R_k$ to $R_\infty$ in $\RepX(\Gamma)$ (see \Cref{CAT(-1)} for precisions on the compact-open topology of this set). This leads to the following Corollary; it is not surprising, but has not been stated to the best of our knowledge: 
\begin{corollary}\label{coro:BowenAlgConv}
    Let $X$ be a complete $\CAT(-1)$-space, consider $(R _k)_{k\in \N}$ a sequence of representations in $\RepX (\F_{r})$ and $R_\infty$ in $\SchottkyspaceX(\F_{r})$. Assume that 
    $\lim_k R _k (g) =R_\infty (g)$ for every 
    $g\in \F _r$. Then, 
    $$
    \lim _k \Hdim (\Lambda _{R _k}) = \Hdim (\Lambda _{R _\infty}) .
    $$
\end{corollary}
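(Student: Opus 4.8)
The plan is to deduce the statement directly from \Cref{thm:bowen-CAT(-1)}; the only thing to do is to translate the algebraic convergence of the $R_k$ into the point-wise convergence of orbit maps required there. First I would fix a base point $o\in X$ and recall what $\lim_k R_k(g) = R_\infty(g)$ means in $\RepX(\F_r)$: the group $\Isom\,X$ carries the compact-open topology, so this convergence says that for each fixed $g\in\F_r$ the isometries $R_k(g)$ converge to $R_\infty(g)$ uniformly on every compact subset of $X$ (see \Cref{CAT(-1)}). Restricting to the compact singleton $\{o\}\subset X$ gives, for every $g\in\F_r$,
$$\tau_k(g) = R_k(g)\cdot o \xrightarrow{k\to\infty} R_\infty(g)\cdot o = \tau_\infty(g)$$
in $X$, which is exactly the point-wise convergence $\tau_k\to\tau_\infty$ hypothesized in \Cref{thm:bowen-CAT(-1)}.

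Since moreover $R_\infty$ is assumed to lie in $\SchottkyspaceX(\F_r)$, all hypotheses of \Cref{thm:bowen-CAT(-1)} are satisfied. That theorem then yields $\Hdim \Lambda_{R_k}\to\Hdim \Lambda_{R_\infty}$, which is the asserted conclusion (and, as a bonus, the Hausdorff convergence $\Lambda_{R_k}\to\Lambda_{R_\infty}$ in $\partial X$). Note that the value of $\Hdim \Lambda_{R_\infty}$ — and of each $\Hdim \Lambda_{R_k}$ — does not depend on the choice of $o$, since changing the base point replaces the visual distance by a bi-Lipschitz equivalent one, so the statement is independent of this choice.

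In this reduction there is essentially no obstacle to overcome: all the analytic content — approximation of limit sets, control of critical exponents, continuity of Hausdorff dimension — is packaged inside \Cref{thm:bowen-CAT(-1)}, whose proof occupies \Cref{sec:Bowen-extension}. The one mild subtlety worth spelling out in the write-up is that $X$ is not assumed locally compact, so one should resist invoking joint continuity of the evaluation map $\Isom\,X\times X\to X$; this is unnecessary, since convergence in the compact-open topology already forces convergence on the singleton $\{o\}$, which is all that is used.
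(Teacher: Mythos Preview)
Your proposal is correct and follows essentially the same route as the paper: verify that the compact-open convergence $R_k(g)\to R_\infty(g)$ yields $\tau_k(g)\to\tau_\infty(g)$ by evaluating at the base point $o$, then invoke \Cref{thm:bowen-CAT(-1)}. The paper's argument is slightly terser (it simply cites that the compact-open topology on $\Isom\,X$ amounts to pointwise convergence, as noted in \Cref{sec:convex-cocompact}), but the content is the same.
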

We now present more precisely the construction of the representations $R_k$ a,d $R_\infty$.

\subsection{Turning diverging sequences of representations into convergent ones}\label{intro-embedding}

We work here with a finitely generated $\Gamma$, not especially a free group. When considering diverging sequences of representations of $\Gamma$ into  $\Isom\,\H^N$, it has become classical to consider \emph{asymptotic cones} of sequences of rescaled hyperbolic spaces $X_k = (\H^N, r_k^{-1} d_{\H^N}, o_k)$, where $r_k$ is a sequence of positive real numbers tending to infinity and $o_k\in \H^N$. 
Paulin \cite{Paulin-ENS}, see also \cite{Bestvina}, has explained how the sequence of pointed metric spaces $X_k$ converges in the \emph{equivariant Gromov-Hausdorff} topology to its asymptotic cone, denoted\footnote{We need in fact the choice of an ultrafilter $\omega$, but we will wilfully ignore this technicality in this introduction and refer to \Cref{sec:asymptotic-cones} and \Cref{sec:partial-hausdorff-convergence} for an actually correct version of the story we tell now.} by $X_\omega$.

However, this type of convergence does not allow to apply Theorem \ref{thm:bowen-CAT(-1)}. In \Cref{sec:partial-hausdorff-convergence}, we examine how to promote this equivariant Gromov-Hausdorff convergence to one in the realm of the Theorem. When looking for a better type of convergence, one could hope to get an actual Hausdorff convergence as subsets of a fixed metric space: is it possible to embed isometrically and equivariantly each $X_k$ and $X_\omega$ in a metric space $Y$ such that $X_k$ goes to $X_\omega$ in the Hausdorff topology of subsets of $Y$? It is indeed the case for compact spaces, cf \cite[Section 7.3]{BuragoIvanov}

Note that $X_\omega$ is not locally compact, so this would imply that $Y$ is not locally compact either. Natural candidates for $Y$ are, as such, infinite dimensional hyperbolic spaces. Indeed, nice equivariant embeddings of hyperbolic spaces \cite{MonodPy,Monod-Py-2019} and trees \cite{DSU,Monod-Py-2019} into infinite dimensional hyperbolic spaces have been defined and studied. We will use these embeddings. However, the answer to this crude question is \emph{no}! 
\begin{proposition}\label{nohausdorffconv}
    There is no metric space $Y$, and sequences of isometric embeddings $f_k: X_k \to Y$ and $f:X_\omega \to Y$ such that:
    \begin{itemize}
        \item The limit of any sequence $f_k(x_k)$ equals the image by $f$ of the point $[(x_i)]$ in $X_\omega$;
        \item $f_i(X_i)$ converges, in the Hausdorff topology of $Y$, to $f(X_\omega)$. 
    \end{itemize}
\end{proposition}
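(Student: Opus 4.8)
The plan is to argue by contradiction, exploiting the tension between the two bullet points: the first forces $Y$ to "remember" the asymptotic cone $X_\omega$ faithfully, while the second forces the images $f_k(X_k)$ to be Hausdorff-close to the compact-looking target $f(X_\omega)$ — but $X_\omega$ is far from compact (it branches at scale $1$), so the $X_k$, which are honest hyperbolic spaces at scale $r_k^{-1}$, cannot fit into a bounded neighborhood of $f(X_\omega)$ in a way compatible with convergence of points. Concretely, I would pick a branch point of $X_\omega$, i.e. a point $m = [(m_i)]$ and three geodesic rays emanating from it that are pairwise disjoint past $m$; such configurations exist because the asymptotic cone of $\H^N$ is an $\R$-tree with dense branching (or at the very least, in our setting where $\rho_\infty$ acts on a nontrivial minimal tree $T\subset X_\omega$, we have genuine tripods). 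Lift these to approximate tripods in $X_k$: points $a_i, b_i, c_i, m_i$ in $\H^N$ with $r_k^{-1} d$-distances converging to the tree distances, and with the three $r_k^{-1} d$-geodesics $[m_i a_i], [m_i b_i], [m_i c_i]$ pairwise diverging.

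The first bullet point, applied to the sequences $(a_i), (b_i), (c_i), (m_i)$, gives that $f_i(a_i)\to f(a)$, etc., so in particular the pairwise $Y$-distances between these four families of points converge to the tree distances, which form a tripod metric. Now here is the crux: by the second bullet, each point of $f_i(X_i)$ — in particular each point on the geodesic triangle with vertices $f_i(a_i), f_i(b_i), f_i(m_i)$, and similarly for the other pairs — lies within $\varepsilon_i\to 0$ of $f(X_\omega)$. But $f_i(X_i)$ is an isometric copy of $(\H^N, r_k^{-1}d)$, hence a $\delta_k$-hyperbolic space with $\delta_k = r_k^{-1}\delta_{\H^N}\to 0$; so it is "uniformly thin" at the relevant scale, and a thin triangle in $f_i(X_i)$ between $f_i(a_i)$ and $f_i(b_i)$ passing near $f_i(m_i)$ exists. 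Projecting all of this into the $\varepsilon_i$-neighborhood of $f(X_\omega)$ and passing to the limit, one obtains in $X_\omega$ (which is complete and $\CAT(-1)$, in fact an $\R$-tree locally) a genuine tripod at $m$; that is consistent. The contradiction must come from a quantitative mismatch, so I would instead compare the \emph{midpoint} of $[a_i b_i]$ in $f_i(X_i)$: in a $\delta_k$-hyperbolic space this midpoint is within $O(\delta_k)$ of the geodesic segment $[f_i(m_i)]$-region, hence (pushing to $X_\omega$) its limit must be the tree-midpoint of $[ab]$, which is $m$ itself. Repeating with $[a_i c_i]$ and $[b_i c_i]$: the midpoints of all three sides of the triangle $f_i(a_i)f_i(b_i)f_i(c_i)$ converge to the same point $m\in X_\omega$. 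But in the genuinely hyperbolic (or even just CAT(0)) space $f_i(X_i)$, the three side-midpoints of a triangle whose sides have lengths bounded away from $0$ (at scale $r_k^{-1}$) are pairwise at distance bounded \emph{below} by a definite constant (again at scale $r_k^{-1}$) — a non-degenerate triangle has non-degenerate "medial triangle". Their $Y$-distances therefore converge to distinct values, contradicting that all three limits equal $m$.

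Let me restate the mechanism cleanly, since that is the step to get right. In $X_\omega$ choose $a,b,c$ with $d_T(a,m)=d_T(b,m)=d_T(c,m)=L>0$ realizing a tripod, and approximants $a_i,b_i,c_i,m_i\in X_i$ with all six pairwise $r_k^{-1}d$-distances within $o(1)$ of the tripod distances. In the CAT(0) space $X_i$ the midpoints $p_i$ of $[a_ib_i]$, $q_i$ of $[a_ic_i]$, $r_i'$ of $[b_ic_i]$ satisfy, by comparison with the Euclidean (or hyperbolic) medial triangle, $r_k^{-1}d(p_i,q_i)\to \tfrac12 L'$ for some explicit $L'>0$ (e.g. $\tfrac12 d_T(b,c)$ in the $\R$-tree limit — one computes this in the comparison triangle and it is bounded below). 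Meanwhile $f_i(p_i), f_i(q_i), f_i(r_i')\in f_i(X_i)$ are within $\varepsilon_i$ of $f(X_\omega)$; and I claim each converges to $m$. Indeed $p_i$ is, up to $O(\delta_k)=o(1)$ at scale $r_k^{-1}$, the projection of $a_i$ (equivalently of $b_i$) onto the "center" of the thin triangle $a_ib_im_i$, which by the first bullet pushes to the center of the tripod $abm$, namely $m$; so $f_i(p_i)\to f(m)$, and likewise for $q_i, r_i'$. Then $d_Y(f_i(p_i),f_i(q_i))\to 0$ while also $d_Y(f_i(p_i),f_i(q_i)) = r_k^{-1}d(p_i,q_i)\to \tfrac12 L'>0$ — contradiction.

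The main obstacle I anticipate is making precise and uniform the assertion that the triangle in $X_i = (\H^N, r_k^{-1}d)$ with the prescribed side lengths has side-midpoints that are \emph{mutually} at distance bounded below by a positive constant at that scale, \emph{and simultaneously} that each such midpoint is within $o(1)$ of the "center" of the corresponding degenerating-to-tripod triangle so that the first bullet pins its limit to $m$. These two facts pull in opposite directions and the reconciliation is exactly the branching phenomenon: it works precisely because the rescaled hyperbolic spaces are $\delta_k$-hyperbolic with $\delta_k\to 0$, so triangles are asymptotically tripods, yet the non-degeneracy of the limiting tripod survives. One must also be careful that a branch point with three non-degenerate directions genuinely exists in $X_\omega$; in the situation of the paper this is guaranteed since $\rho_\infty$ acts minimally on a non-trivial tree $T$, which is therefore not a line and contains a tripod, and $T$ embeds isometrically $\F_r$-equivariantly in $X_\omega$. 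Finally, one should note that $X_\omega$ being complete and $\CAT(-1)$ (hence uniquely geodesic) is what legitimizes speaking of "the center of a tripod" as a well-defined limit point.
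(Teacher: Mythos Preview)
Your argument has a genuine error at its crux. You claim that in the rescaled space $X_i=(\H^N, r_k^{-1}d)$ the midpoints $p_i,q_i,r_i'$ of the three sides of the triangle $a_ib_ic_i$ stay at mutual rescaled distance bounded below by a positive constant, invoking ``comparison with the Euclidean (or hyperbolic) medial triangle''. But $\CAT(0)$ comparison only gives \emph{upper} bounds on distances inside a triangle relative to the Euclidean model; it gives no lower bound whatsoever. And in fact the opposite of your claim holds: since $X_i$ is $\delta_k$-hyperbolic with $\delta_k\to 0$, the triangle $a_ib_ic_i$ degenerates to a tripod, and in the limiting tripod in $X_\omega$ the three side-midpoints \emph{coincide} at the center $m$. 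Hence $r_k^{-1}d(p_i,q_i)\to 0$, not $\to\tfrac12 L'>0$. Your two ``facts that pull in opposite directions'' are not reconciled by branching; one of them is simply false, and no contradiction is reached. You essentially noticed this tension yourself in your last paragraph but resolved it the wrong way.

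The paper's proof exploits the same underlying intuition (trees are disconnected at positive scale where hyperbolic spaces are not) but implements it correctly and much more simply: look at the unit sphere around the basepoint. In $X_k$ this sphere is path-connected, hence $\epsilon$-connected for every $\epsilon>0$. In the real tree $X_\omega$ the unit sphere carries an ultrametric, so any ball of radius $\epsilon<2$ in it equals its own $\epsilon$-neighbourhood; in particular the sphere is not $\epsilon$-connected. Since a Hausdorff limit of $\epsilon$-connected sets must land in a single $\epsilon$-ball of the limit, this contradicts the second bullet. If you want to repair your approach, replace the side-midpoints by two points on the sphere of radius $L/2$ around $m_i$ lying on distinct branches (lifts of $a',b'\in X_\omega$ at distance $L$); then use connectedness of that sphere in $X_i$ together with the second bullet to produce points of $f(X_\omega)$ near the sphere of radius $L/2$ around $m$ that interpolate between $a'$ and $b'$, which is impossible in the tree. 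That is essentially the paper's argument specialised to your tripod.
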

\begin{proof}
Indeed, one can look at the unit spheres in $X_k$ and $X_\omega$: the fact that $X_\omega$ is a complete real tree implies that, for any $0<\epsilon <2$, any ball of radius $\epsilon$ in the unit sphere is its own $\epsilon$-neighborhood.

On the contrary, in any $X_k$, the sphere is $\epsilon$-connected: for any pair of points, there is a chain of points connecting them, with each steps of length $<\epsilon$.

This contradicts Hausdorff convergence of $f_k(X_k)$ to $f(X_\omega)$ in $Y$: the limit of a sequence of $\epsilon$-connected subsets should be included in a single ball of radius $\epsilon$ in the image of the unit sphere in $f(X_\omega)$
\end{proof}

So we are looking for an intermediate notion of convergence. In order to explain it, let us come back to \cite{Monod-Py-2019}. Monod and Py studied the self-representations of the infinite dimensional Möbius group, based on the notion of \emph{kernel of hyperbolic type} introduced by Gromov. They have in particular shown, building on their previous \cite{MonodPy} and other contributions \cite{BIM,DSU}, that there are:
\begin{itemize}
    \item a one-parameter family of continuous embeddings $f_t : \H^N \rightarrow \H^\infty$, for all $0\leq t \leq 1$, which are equivariant for a family of representations $\nu_t: \Isom\,\H^N\to \Isom\,\H^\infty$. Denoting by $d$ both the distance in $\H^N$ and $\H^{\infty}$, they verify for all $x,y\in\H^N$:
        \begin{equation*}
            (\cosh {d(x,y)})^t  = \cosh {d(f_t(x), f_t(y))}
        \end{equation*}
   They are very close to being simple dilatations by a factor $t$ and are anyway $1$-Lipschitz, see \Cref{QI1}. They naturally extend to embeddings of $\partial \H^N$ to $\partial \H^\infty$.
    The functions $f_t$ are well-defined up to post-composition by an isometry of $\H^\infty$ as well as the representations $\nu_t$ up to conjugation by the same isometry. We fix one such choice.
    \item for any separable tree $T$, a one-parameter family\footnote{
        This family depends in fact on the tree $T$, we omit this dependence in the notations.
    } of continuous embeddings $F_s : T \rightarrow \H^\infty$, for all $0\leq s$, which are equivariant for a family of representations $\mu_s: \Isom\, T\to \Isom\,\H^\infty$. They are very close to being simple dilatations by a factor $s$. Here also, the functions $F_s$ and representations $\mu_s$ are well-defined, up to an isometry of $\H^\infty$ and we fix one choice. 
\end{itemize}
Those constructions are briefly reviewed in Section \ref{sec:EmbeddingKernel}.

With these families, we can explain our manipulation on the representations $\rho_k$, with joint displacement $r_k$, and $\rho_\infty$. Indeed, let $\rho_k$ be a diverging sequence of representations, $T$ be the minimal invariant tree in the asymptotic cone, which is separable, and $\rho_\infty$ be the limiting representation. Define $R_\infty$ to be the composition $\mu_1\circ \rho_\infty$. It acts on $F_1(T)$ which is an almost isometric embedding of $T$ in $\H^\infty$. In particular, $F_1$ defines a Lipschitz embedding of $\partial T$ to $\partial \H^\infty$.

The asymptotic cone $X_\omega$ and the minimal tree $T$ are obtained by renormalizing the metric space $\H^N$ by the factor $r_k^{-1}$.
The main observation of our paper is that this renormalization can be actually almost isometrically embedded 
into $\H^\infty$, by considering $f_{r_k^{-1}} (\H^N)$, for all $k$ such that $r_k>1$. It 
is not hard to see that the sequence of actions of $\Gamma$ on $f_{r_k^{-1}} (\H^N)\subset \H^\infty$,
through $\nu_{r_k^{-1}}\circ \rho_k$, converges in the equivariant Gromov-Hausdorff sense to the action of $\Gamma$ on $F_1(T)\subset \H^\infty$, through $R_\infty$. The content of our third Theorem is that 
we can choose a sequence $\mathcal I_k$ of isometries of $\H^\infty$, defining the representations and the $R_k$-equivariant embeddings $\phi_k$ by:
$$R_k:= \mathcal I_k\circ (\nu_{r_k^{-1}}\circ \rho_k)\circ \mathcal I_k \textrm{ and } \phi_k:=\mathcal I_k\circ f_{r_k^{-1}} : \H^N\to\H^\infty \textrm{ with }\phi_k \circ \rho _k = R_k \circ \phi _k,$$
such that the convergence of $\phi_k(X_k)$ to $F_1(T)$
is much better and close to an actual Hausdorff convergence. We call this new convergence a 
\emph{partial equivariant Hausdorff convergence}. Its actual definition needs the choice of an ultrafilter; informally, equivariant partial Hausdorff convergence is defined in the following way (see \Cref{def:partial-Hausdorff-convergence}):

\noindent We say that a sequence of embeddings $(\phi_k)_{k\in \N}: \H^N\to \H^\infty$ realizes a \emph{partial equivariant Hausdorff convergence} of $\H^N$ to $F_1(T)$ if one can construct
a sequence of finite subsets $(K_k)_{k\in \N}$  of $\H^N$
such that $F_1 (T)$ is the set of accumulation points of sequences $(\phi_k(x_k))$, where $x_k\in K_k$ and the actions of $\Gamma$ on $F_1(T)$ through $R_\infty$ and on sequences through the sequence $(R_k)$ are compatible.

Our third main Theorem completes the first step of the strategy for \Cref{hausdimasymp}:
\begin{theorem}[Equivariant partial Hausdorff convergence, see Theorem \ref{thm:partial-hausdorff-convergence}]\label{thm:intro-partial-hausdorff-convergence}
    For any diverging sequence $\rho_k:\Gamma\to \Isom\,\H^N$, with the notations above, one can choose a sequence $\mathcal I_k$ of isometries of $\H^\infty$ such that the sequence of embeddings $(\phi_k:=\mathcal I_k\circ f_{r_k^{-1}})_{k\in \N}$ realizes a partial equivariant Hausdorff convergence of $\H^N$ to $F_1(T)$. 
    
    Moreover, the representations $R_k$ preserve $\mathcal I_k(\H^N)$ and the sequence of orbit maps $(\tau_{R_k})_{k\in \N}$ "converges" point-wise to $\tau_{R_\infty}$.   
\end{theorem}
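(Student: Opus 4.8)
The plan is to unwind the equivariant Gromov--Hausdorff convergence of the rescaled actions $X_k = (\H^N, r_k^{-1}d, o_k)$ to the minimal tree $T$ and to realize both sides simultaneously inside $\H^\infty$ via the Monod--Py embeddings $f_t$ and $F_s$, the ultrafilter choice being the one fixed in \Cref{sec:asymptotic-cones}. First I would set $Y_k := f_{r_k^{-1}}(\H^N)\subset \H^\infty$ with its induced metric and the action of $\Gamma$ through $\nu_{r_k^{-1}}\circ \rho_k$. The key metric input is the identity $(\cosh d(x,y))^{r_k^{-1}} = \cosh d(f_{r_k^{-1}}(x), f_{r_k^{-1}}(y))$ from \cite{Monod-Py-2019}: taking logarithms and using $\cosh\sim \frac12 e^{(\cdot)}$ for large arguments, one checks that $d(f_{r_k^{-1}}(x),f_{r_k^{-1}}(y))$ is, uniformly on scales $\gtrsim 1$ in the rescaled metric, within an additive error $o(1)$ (controlled by $\log 2 / r_k$ plus lower-order terms) of $r_k^{-1} d_{\H^N}(x,y)$, i.e. of the metric of $X_k$. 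In the same way, $F_1$ is an almost-isometric embedding of $T$. Thus, abstractly, $Y_k \to F_1(T)$ in the equivariant Gromov--Hausdorff sense, with explicit almost-isometries.

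The second step is to upgrade this to a \emph{genuine} convergence by choosing the isometries $\mathcal I_k$. Here I would proceed as in the compact case of \cite[Section 7.3]{BuragoIvanov} but relativized to finite $\Gamma$-equivariant ``nets'': fix an exhaustion of $\Gamma$ by balls $B_\Gamma(n)$ in the word metric, and for each $k$ pick a finite subset $K_k\subset \H^N$ which is $\Gamma$-equivariantly adapted — e.g. $K_k = B_\Gamma(n_k)\cdot o_k$ together with a fine net of the convex hull of this orbit — with $n_k\to\infty$ slowly enough that all pairwise rescaled distances within $K_k$ are controlled and that $\phi_k(K_k)$ $\delta_k$-approximates the relevant part of $F_1(T)$ with $\delta_k\to 0$. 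The equivariant Gromov--Hausdorff convergence provides, for each $k$, an approximate isometry between $\phi_k(K_k)$ (intrinsically, i.e. $f_{r_k^{-1}}(K_k)$) and the corresponding net in $F_1(T)$; since $\H^\infty$ is homogeneous and in fact any finite $\epsilon$-isometry between finite subsets of $\H^\infty$ can be promoted to a global isometry up to an $O(\epsilon)$ error — this is where the non-local-compactness is harmless, because $\H^\infty$ has ``enough room'' and a transitive isometry group on configurations — I can choose $\mathcal I_k \in \Isom\,\H^\infty$ so that $\mathcal I_k\circ f_{r_k^{-1}}$ restricted to $K_k$ lands $\delta_k'$-close to $F_1(T)$, with $\delta_k'\to 0$, and so that this is compatible with the $\Gamma$-actions (i.e. $\mathcal I_k$ is chosen to match orbit points to orbit points). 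Defining $R_k := \mathcal I_k\circ(\nu_{r_k^{-1}}\circ\rho_k)\circ\mathcal I_k^{-1}$ and $\phi_k := \mathcal I_k\circ f_{r_k^{-1}}$, equivariance $\phi_k\circ\rho_k = R_k\circ\phi_k$ is automatic, and $R_k$ preserves $\mathcal I_k(\H^N)$ by construction; the point-wise ``convergence'' of orbit maps $\tau_{R_k}(g) = R_k(g)\cdot o \to \tau_{R_\infty}(g)$ (with the appropriate ultrafilter reading of ``converges'') follows because $g\cdot o$ lies in $K_k$ for $k$ large in the ultrafilter and $\phi_k(g\cdot o_k)$ was arranged to accumulate on $R_\infty(g)\cdot F_1(p)$.

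The last step is to verify that this data meets the definition of partial equivariant Hausdorff convergence of \Cref{def:partial-Hausdorff-convergence}: the finite sets are the $K_k$; the set of accumulation points of $(\phi_k(x_k))$ with $x_k\in K_k$ is exactly $F_1(T)$ — the inclusion ``$\subseteq$'' because $\phi_k(K_k)$ is $\delta_k'$-close to $F_1(T)$ and $F_1(T)$ is closed, and ``$\supseteq$'' because the nets in $F_1(T)$ get arbitrarily fine as $n_k\to\infty$ and each net point is hit up to $\delta_k'$; and the $\Gamma$-equivariance of this matching is the compatibility clause. I expect the main obstacle to be precisely the promotion of finite approximate isometries of $\H^\infty$ to global isometries $\mathcal I_k$ while simultaneously keeping equivariance and letting the error go to zero: one must be careful that enlarging $K_k$ (needed for the net of $F_1(T)$ to become fine) does not degrade the quality of the approximate isometry coming from the Gromov--Hausdorff convergence, so the rate $n_k\to\infty$ has to be coupled to the convergence rate — a standard but delicate diagonal argument, and the only place where the ultrafilter genuinely enters. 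Everything else (the metric estimate on $f_t$, equivariance, closedness of $F_1(T)$) is routine given the results recalled in \Cref{sec:EmbeddingKernel}.
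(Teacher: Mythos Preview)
Your proposal is correct and follows essentially the same architecture as the paper: equivariant Gromov--Hausdorff convergence of the rescaled spaces, almost-isometric embeddings $f_{r_k^{-1}}$ and $F_1$ into $\H^\infty$, a diagonal choice of finite sets $K_k$, an alignment isometry $\mathcal I_k$, and finally verification of \Cref{def:partial-Hausdorff-convergence}. Two points where the paper's execution differs and where your sketch should be sharpened:

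\medskip

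\textbf{Construction of $K_k$.} The paper works from the tree side rather than the $X_k$ side: it fixes an exhaustion $E_0\subset E_1\subset\cdots$ of a countable dense subset of $T$ together with an exhaustion $(\Gamma_l)$ of $\Gamma$, and \emph{lifts} $\Gamma_l\cdot E_l$ to finite sets $K_{k,l}\subset X_k$ via the Gromov--Hausdorff relations (\Cref{def:K_ile}). It then sets $K_k:=K_{k,\ell(k)}$ where $\ell(k)$ is the maximal $l$ for which a good lift exists at stage $k$, and proves that $\ell(k)<\infty$ for each $k$ (because $X_k$ contains no tripod, \Cref{lem:no-tree-in-Xi}) but $\omegalim\ell(k)=\infty$ (\Cref{l(i)-infini}). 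Your choice ``orbit of $o_k$ plus a fine net of its convex hull'' would also work, but it makes the point-to-point correspondence with $F_1(T)$ less canonical---you would still have to say which net point in $X_k$ matches which point of $T$, and that is exactly what the lifting provides for free. The paper's direction makes both the diagonal argument and the equivariance check cleaner.

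\medskip

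\textbf{Promotion of finite approximate isometries.} The step you correctly flag as the main obstacle---promoting an $\epsilon$-isometry between two finite subsets of $\H^\infty$ to a global isometry with $O(\epsilon)$ error---is precisely what the paper isolates and proves as \Cref{approxkernel} and \Cref{approxhyperbolic}, via Gram--Schmidt in the hyperboloid model. Note that this step carries a non-degeneracy hypothesis: the two finite configurations must span hyperbolic subspaces of the maximal possible dimension. The paper obtains this from the ``total hyperbolic image'' property of the Monod--Py embeddings (\Cref{total-n}); you should make this explicit, since the promotion plainly fails for, say, collinear configurations. Relatedly, the paper phrases the closeness hypothesis in terms of the \emph{kernels} $\beta^{t_k}$ and $\beta_s$ (\Cref{prop-convergence}) rather than the distances, which is the natural input for \Cref{approxhyperbolic} (the hypothesis there is on $\cosh d$, not $d$). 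With these clarifications your outline matches the paper's proof.
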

Above, the notion of "convergence" is actually a convergence along the ultrafilter, see  \Cref{thm:partial-hausdorff-convergence} for the precise statement. Surprisingly, a variation of our theorem is given in \Cref{sec:convergent_subsequences}, where explicit subsequences are build along which the representations $R_k$ actually converge toward $R_\infty$ in $\Repr_{\H^\infty}(\Gamma)$. In particular, it shows that our construction for McMullen example leads to an actual convergence of $R_\theta$ to $R_\infty$ in $\Repr_{\H^\infty}(\Gamma)$, see \Cref{ex:mcmullen3}.

The conclusion of \Cref{thm:intro-partial-hausdorff-convergence} is what we need to apply our  \Cref{thm:bowen-CAT(-1)} to the sequence $R_k$. As, moreover, $\phi_k:=\mathcal I_k\circ f_{r_k^{-1}}$ is almost homothetic by a factor $r_k^{-1}$, the induced map between limit sets $\Lambda_{\rho_k}$ and $\Lambda_{R_k}:=\phi_k(\Lambda_{\rho_k})$ acts as an exponentiation to the power $r_k^{-1}$ on the visual distances, see \Cref{sec:hausdimasymp}.
Hence the Hausdorff dimensions verify Eq. \eqref{eq:relations-Hdim}:
$$r_k\Hdim \Lambda_{\rho_k} = \Hdim \Lambda_{R_k}.$$

This proves \Cref{hausdimasymp} and concludes the paper.

\subsection{Acknowledgments}

The authors express their deep gratitude to Nguyen-Bac Dang, Vlerë Mehmeti, Frédéric Naud, Anne Parreau, Frédéric Paulin, Pierre Py, Samuel Tapie, Teddy Weisman, Pierre Will, David Xu and more generally all the members of ANR-23-CE40-0012-03 project \href{https://hilbertxfield.pages.math.cnrs.fr/}{HilbertXField} for fruitful discussions and their enlightning comments.

\tableofcontents

\section{CAT(-1)-geometry}\label{CAT(-1)}

We review in this section standard notations and known facts about $\CAT(-1)$-geometry and prove a few results we will need later on. For convenience, our main reference will be \cite{DSU}; see also \cite{BridsonHaefliger}. The main examples to keep in mind are $\H^\infty$,  metric trees and more generally what \cite{DSU} calls a ROSSONCT.

\subsection{$\CAT(-1)$-spaces}\label{sec:GromovBusemann}

A $\CAT(-1)$-space $(X,d)$ is a geodesic metric space which satisfy the $\CAT(-1)$-inequality \cite[Eq. (3.2.1)]{DSU} expressing that triangles are thinner than in the hyperbolic plane. We will \emph{always} assume our $\CAT(-1)$-space to be complete, but not necessarily to be locally compact (or proper). 
Such a space admits an ideal boundary $\partial X$, see\footnote{Beware that the lack of properness requires it to be defined as a quotient of a space of sequences rather than geodesic rays.} \cite[Section 3.4]{DSU}.  We will not precisely define it here, but still define two important functions on a $\CAT(-1)$-space: Gromov product and Busemann functions.
\begin{definition}\label{def:GromovBusemann}
Let $X$ be a $\CAT(-1)$-space, for three points $x,y,z$ in $X$, the Gromov product of $x$ and $y$ w.r.t. $z$ is:
$$\langle x,y\rangle_z:= \frac 12 \left( d(x,z)+d(y,z)-d(x,y) \right);$$
the Busemann function is: $x,y,z\mapsto B(x,y,z)=d(x,z)-d(y,z)$. 
\end{definition}
Note that \cite{DSU} denotes this last function by $B_z(x,y)$. Those functions are invariant under the action of $\Isom\, X$ on all variables simultaneously. In the language of hyperbolic spaces of \cite[Section 3.3]{DSU}, $\CAT(-1)$-spaces are \emph{strongly}\footnote{When referring to \cite{DSU}, we state directly the version of their statements holding for strongly hyperbolic spaces, not the weaker general version.} hyperbolic because the Gromov product verify, for all $x,y,z,w\in X$:
\begin{equation}\label{eq:visual_metric_triangular_<}
    e^{-\langle x,z \rangle_w} \leq e^{-\langle x,y \rangle_w} + e^{-\langle y,z \rangle_w}
\end{equation}

The Gromov product $\langle x,y\rangle_z$ extends continuously \cite[Lemma 3.4.22]{DSU} to $x\neq y\in \partial X$ and $z\in X$; the Busemann function $B(x,y,z)$ to $x,y\in X$, $z\in \partial X$. They furthermore verify a series of relations stated in \cite[Prop 3.3.3]{DSU} that we do not recall here, but will use with proper reference when needed. In particular, the functions $x\to B(x,y,\xi)$ and $y\to B(x,y,\xi)$ are Lipschitz functions on $(X,d)$; the function $\xi \to B(x,y,\xi)$ is Lipschitz with respect to the visual metric $d_o$ (defined in the next session), see also \cite{Bo}. The invariance of the extended Busemann functions is given, for every $x,y \in X$, $\zeta \in \partial X$ and $g\in \Isom\, X$, by:
\begin{equation}\label{busemaninvariance}
B(x,y,\zeta) = B(gx,gy,g\zeta).
\end{equation}

As we have assumed $X$ to be complete, it is regular hyperbolic, see \cite[Prop. 4.4.4]{DSU}. In particular, between any two distinct points in $X\cup\partial X$, there is a unique geodesic and this geodesic varies continuously w.r.t. the two points.

\begin{example}[Infinite dimensional hyperbolic spaces]\label{H-infini}
Fix an infinite cardinal $\alpha$. Let $H_{\alpha}, b_\alpha$ be a real Hilbert space of cardinality $\alpha$. We consider the Minkowski space $\R\oplus H_\alpha$ with the bilinear form $(s,u),(s',u') \mapsto ss' - b_\alpha(u,u')_\alpha$.
The infinite dimensional hyperbolic space $\H^\alpha$ of cardinal $\alpha$ is defined as:
$$\H^\alpha = \left\{ (s,u)\in \R\oplus H_\alpha \textrm{ s.t. }s^2-(u,u)_{H_\alpha} = 1, \, s>0\right\}.$$
Its boundary $\partial \H^\alpha$ is canonically identified to $\{(1,u)\}$ where $u$ belongs to the unit sphere $b_\alpha(u,u)=1$ in $H_\alpha$.

If $\alpha$ is countable, we will denote this space instead by $\H^\infty$. We will actually need only this separable hyperbolic space in this paper.
\end{example}

\subsection{Visual metric and shadows}\label{sec:visualmetric}

Let $(X,d,o)$ be a pointed $\CAT(-1)$-space. Then one defines the visual metric, from $o$, on $\partial X$ by:
$$d_o(\xi,\zeta) = e^{-\langle \xi, \zeta\rangle_o}.$$
The triangular inequality is granted by Eq. \eqref{eq:visual_metric_triangular_<}. It is complete on $\partial X$, see \cite[Observation 3.6.7]{DSU}.

Given a set $E\subset X$, one defines its shadow $\pi^{\partial X}_o(E)$ as the set of points $\xi$ in $\partial X$ such that the geodesic between $o$ and $\xi$ meets $E$. A very useful estimates for our purposes is the fact that the shadow of a moderate size ball $\mathcal B(z,\sigma)$ centered at $z$ far away from $o$ has a small diameter for $d_o$. Let us denote, for any $E'\subset \partial X$, by $\diam_o(E')$ its diameter for $d_o$.
\begin{proposition}\label{prop:diam_shadows}
Let $(X,d,o)$ be a pointed complete $\CAT(-1)$-space. For each $r>0$, there exists $c_r>0$ such that for all $z\in X$ we have:
$$\diam_o(\pi^{\partial X}_o(\mathcal B(z,r))) \leq c_r e^{-d(o,z)}.$$
\end{proposition}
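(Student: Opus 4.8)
The statement is a shadow-diameter estimate: the shadow from $o$ of a ball $\mathcal B(z,r)$ has visual diameter at most $c_r e^{-d(o,z)}$. The natural approach is to unwind the definition $d_o(\xi,\zeta) = e^{-\langle \xi,\zeta\rangle_o}$ and bound the Gromov product $\langle \xi,\zeta\rangle_o$ from below for any two boundary points $\xi,\zeta$ lying in the shadow. So first I would fix $\xi,\zeta \in \pi^{\partial X}_o(\mathcal B(z,r))$. By definition the geodesic ray from $o$ to $\xi$ passes through some point $p \in \mathcal B(z,r)$, and the ray from $o$ to $\zeta$ passes through some $q \in \mathcal B(z,r)$; in particular $d(p,q) \leq 2r$ and $d(o,p), d(o,q) \geq d(o,z) - r$.

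\textbf{Main step: lower bound on the Gromov product.} I would estimate $\langle \xi,\zeta\rangle_o$ in two stages. Since $p$ lies on the geodesic $[o,\xi)$, the Gromov product $\langle \xi, p\rangle_o$ equals $d(o,p)$ (a point on the geodesic has Gromov product with the endpoint equal to its distance from $o$); similarly $\langle \zeta, q\rangle_o = d(o,q)$. Now I want to compare $\langle \xi,\zeta\rangle_o$ with $\langle p,q\rangle_o$. Using the strong hyperbolicity inequality \eqref{eq:visual_metric_triangular_<} — which says $e^{-\langle x,z\rangle_w} \leq e^{-\langle x,y\rangle_w} + e^{-\langle y,z\rangle_w}$, valid also for boundary points by continuity of the extended Gromov product \cite[Lemma 3.4.22]{DSU} — applied with $w = o$ and interpolating through $p$ and then $q$, I get
\[
e^{-\langle \xi,\zeta\rangle_o} \leq e^{-\langle \xi,p\rangle_o} + e^{-\langle p,\zeta\rangle_o} \leq e^{-\langle \xi,p\rangle_o} + e^{-\langle p,q\rangle_o} + e^{-\langle q,\zeta\rangle_o}.
\]
The first and last terms are $e^{-d(o,p)}$ and $e^{-d(o,q)}$, each at most $e^{r} e^{-d(o,z)}$. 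For the middle term, $\langle p,q\rangle_o = \frac12(d(o,p) + d(o,q) - d(p,q)) \geq \frac12(2(d(o,z)-r) - 2r) = d(o,z) - 2r$, so $e^{-\langle p,q\rangle_o} \leq e^{2r} e^{-d(o,z)}$. Summing, $d_o(\xi,\zeta) = e^{-\langle \xi,\zeta\rangle_o} \leq (2e^{r} + e^{2r}) e^{-d(o,z)}$, and taking the supremum over $\xi,\zeta$ in the shadow gives the claim with $c_r := 2e^{r} + e^{2r}$ (any such explicit constant depending only on $r$ works).

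\textbf{Anticipated obstacle.} The only delicate point is the identification $\langle \xi, p\rangle_o = d(o,p)$ when $p$ lies on the geodesic ray $[o,\xi)$ and $\xi \in \partial X$. For interior points this is immediate from the definition together with $d(o,\xi') = d(o,p) + d(p,\xi')$ for $\xi'$ on $[o,\xi)$ beyond $p$; for the boundary point one must pass to the limit along the ray and invoke the continuity of the extended Gromov product from \cite[Lemma 3.4.22]{DSU}. One must also make sure the shadow is interpreted with the geodesic (not merely geodesic rays that start at $o$) so that "the geodesic between $o$ and $\xi$ meets $\mathcal B(z,r)$" indeed supplies the point $p$ on $[o,\xi)$ — which is exactly the definition given in \Cref{sec:visualmetric}, so no extra work is needed. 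Completeness of $X$ guarantees (via regularity, \cite[Prop. 4.4.4]{DSU}) that this geodesic ray exists and is unique, so the argument is airtight.
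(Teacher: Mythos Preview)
Your proof is correct. The paper's own proof is simply a one-line citation: ``This is a combination of Corollary 4.5.5 and Lemma 4.5.8 of \cite{DSU}.'' You instead give a direct, self-contained argument using only the strong hyperbolicity inequality \eqref{eq:visual_metric_triangular_<} (extended to boundary points by continuity) together with the elementary identity $\langle \xi,p\rangle_o = d(o,p)$ for $p\in[o,\xi)$. This is genuinely more elementary than deferring to \cite{DSU}, and it has the bonus of producing an explicit constant $c_r = 2e^r + e^{2r}$, whereas the paper's citation leaves the constant implicit. The trade-off is only that the paper keeps the exposition shorter by outsourcing the work; your version would make the paper more self-contained at the cost of a few extra lines. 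One small remark: the continuity of the Gromov product you invoke is stated in the paper only for pairs of distinct boundary points, but the mixed case ($\xi\in\partial X$, $p\in X$) that you need is no harder and is covered by the same reference \cite[Lemma 3.4.22]{DSU}, so there is no gap.
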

\begin{proof}
This is a combination of Corollary 4.5.5 and Lemma 4.5.8 of \cite{DSU}.
\end{proof}

\subsection{Quasi-isometric representations}\label{sec:convex-cocompact}

The group $\Isom\, X$ of isometries of $X$ is equipped with its compact-open topology, see \cite[Section 5.1]{DSU} and \cite{Duchesne}. This topology amounts to the pointwise convergence: $g_n\to g$ iff for all $x\in X$, $g_n(x)\to g(x)$.
Any transformation $g\in\Isom\, X$  extends continuously as a Lipschitz transformation of $(\partial X,d_o)$. 

We fix now a finitely generated group $\Gamma$, with a generating set $S$ and 
hence a $\Gamma$-invariant length metric on $\Gamma$: $|\gamma|_\Gamma$ equals the minimal length of
an expression of $\gamma$ as a product of generators. It is the length of paths in the Cayley graph of
$(\Gamma,S)$. A \emph{geodesic ray} in $\Gamma$ is a sequence $\gamma_0=e,\,\gamma_1,\ldots$ such that
$|\gamma_i|_\Gamma=i$ for all $i$.

The space $\RepX(\Gamma)$ is endowed with the compact-open topology, which once again amounts to pointwise convergence: $\rho_n\to\rho_\infty$
iff for all $\gamma\in\Gamma$ we have $\rho_n(\gamma)\to\rho_\infty(\gamma)$ in $\Isom\, X$.
A crucial notion for our study of representations is the orbit map:
\begin{definition}[Orbit map]
For any $\rho \in \RepX(\Gamma)$, the \emph{orbit map} of $\rho$ is the map:
\begin{align*}
    \tau_\rho:\: & \Gamma \to X\\
    & \gamma \mapsto \rho(\gamma)\cdot o
\end{align*}
\end{definition}
We will repeatedly encounter the situation where a sequence of orbit maps $\tau_k$ converge simply to an orbit map $\tau_\infty$, i.e. for all $\gamma\in\Gamma$ we have $\tau_k(\gamma) \to \tau_\infty(\gamma)$. We will denote this situation by:
$$\tau_k \simplelimit \tau_\infty.$$

The notion of convex-cocompact representation is subtle in $\CAT(-1)$-spaces. In fact, we restrict to the related notion of \emph{quasi-isometric representation}. Recall \cite[Definition 3.3.9]{DSU} that a map $f : (A,d_A) \to (B,d_B)$ between two metric
spaces is a $(K,C)$-quasi isometric embedding (for $K\geq1$, $C\geq 0$) if we have for all $a,a'$ in $A$:
$$\frac 1K d_A(a,a')-C\leq d_B(f(a),f(b))\leq K d_A(a,a')+C.$$
When $C=K$ in the above definition, we will simply say $K$-quasi isometric embedding.
\begin{definition}[Quasi-isometric representation]\label{def:convex_cocompact}
Let $\Gamma$ be a finitely generated group and $(X,d,o)$ be a pointed complete $\CAT(-1)$-space. 
A representation $\rho \in \RepX(\Gamma)$ is $K$-quasi-isometric (for a $K\geq 1$) if 
the orbit map $\tau_\rho$
is a $K$-quasi isometric embedding of $(\Gamma,|\cdot|_\Gamma)$ into $(X,d)$.
\end{definition}
We will say simply \emph{quasi-isometric representation} when the constant $K$ is not important. When $K$ is not important, the previous definition does not depends on the choice of the point $o$: if we change the origin to $o'$, the constant $K$ changes, but not the quasi-isometric property for the orbit map associated to $o'$.
For $X$ locally compact or $X=\H^\infty$, a representation $\rho$ is quasi-isometric iff it is convex-cocompact, i.e. there exists a $\rho(\Gamma)$-invariant convex subset of $X$ on which $\Gamma$ acts cocompactly, see \cite[Theorem 1.1]{Xu}. We do not know if the same holds for a general $X$.

For a group $\Gamma$ to admit quasi-isometric representations in a $\CAT(-1)$ space, 
it must at least be hyperbolic \cite[Theorem III.H.1.9]{BridsonHaefliger}. As such, it admits a visual boundary $\partial \Gamma$. For every quasi-isometric representation $\rho$, the orbit map $\tau_\rho$ extends continuously to a map, still denoted
$\tau_\rho$, from $\partial \Gamma$ to $\partial X$. For such a group, any geodesic ray in $\Gamma$ has a well-defined endpoint $\xi \in \partial \Gamma$.

An important result for our purpose is that the set of quasi-isometric representations is open in $\RepX(\Gamma)$, and even open for the coarser topology given by the simple convergence of orbit maps:
\begin{theorem}[{\cite[Theorem 3.8]{Xu}}]\label{thm:quasi_isometric_open}
Let $(X,d,o)$ be a pointed complete $\CAT(-1)$-space and $\Gamma$ be a finitely generated group. 
If $\rho_\infty : \Gamma \to \Isom\, X$ is a 
$K$-quasi-isometric representation, then for some $K'>K$, for every sequence $(\rho_k)$ in $\RepX(\Gamma)$ such that $\tau_{\rho_k}\simplelimit\tau_{\rho_\infty}$ all but a finite number of $\rho_k$'s are $K'$-quasi-isometric.
\end{theorem}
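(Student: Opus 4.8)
The plan is to deduce this openness statement from a local-to-global (Morse-type) stability principle for quasigeodesics, exploiting that pointwise convergence of orbit maps is automatically uniform on finite subsets while balls in $\Gamma$ are finite. (Recall that $\Gamma$, admitting a quasi-isometric representation into a $\CAT(-1)$-space, is word-hyperbolic; but the hyperbolicity used below is really that of $X$, which is $\CAT(-1)$ hence Gromov hyperbolic with a universal constant.)

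\textbf{Step 1: a finitary sufficient condition for being quasi-isometric.} I would first isolate an integer $L=L(K)$ and a constant $K''=K''(K)>K$ such that a representation $\rho\in\RepX(\Gamma)$ is $K''$-quasi-isometric as soon as its orbit map satisfies the \emph{local} estimate
$$\tfrac{1}{K+1}\,|\eta|_\Gamma-(K+1)\ \le\ d\big(o,\rho(\eta)\cdot o\big)\ \le\ (K+1)\,|\eta|_\Gamma+(K+1)\qquad\text{for all }\eta\in\Gamma\text{ with }|\eta|_\Gamma\le L.$$
By left-invariance, $d(\tau_\rho(\gamma),\tau_\rho(\gamma'))=d(o,\rho(\gamma^{-1}\gamma')\cdot o)$, so this is a condition on the finite set $\{\eta:|\eta|_\Gamma\le L\}$ only. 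To see it suffices, given $\gamma,\gamma'$ choose a geodesic word $e=\eta_0,\dots,\eta_n=\gamma^{-1}\gamma'$ in the Cayley graph; then for $j-i\le L$ one has $|\eta_i^{-1}\eta_j|_\Gamma=j-i$, so the image sequence $\tau_\rho(\gamma\eta_0),\dots,\tau_\rho(\gamma\eta_n)$ in $X$ is an $L$-local $(K+1,K+1)$-quasigeodesic. The local-to-global principle for quasigeodesics in Gromov hyperbolic spaces — valid without any properness assumption, see \cite{DSU} — then promotes it to a global $(K'',K'')$-quasigeodesic, whose endpoints satisfy $\tfrac{1}{K''}n-K''\le d(\tau_\rho(\gamma),\tau_\rho(\gamma'))\le K'' n+K''$ with $n=|\gamma^{-1}\gamma'|_\Gamma$ (the upper bound is in fact immediate from the triangle inequality and the $|\eta|_\Gamma=1$ case of the local estimate). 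Hence $\rho$ is $K''$-quasi-isometric.

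\textbf{Step 2: upgrading pointwise convergence.} Fix $L$ and $K''$ as in Step~1 for the given $K$. Since $\rho_\infty$ is $K$-quasi-isometric, $\tau_{\rho_\infty}$ satisfies the inequalities of Step~1 on $\{\eta:|\eta|_\Gamma\le L\}$ with $K$ in place of $K+1$, i.e. with strict room to spare on this finite set. As $\Gamma$ is finitely generated, this set is finite, and for each such $\eta$ the hypothesis $\tau_{\rho_k}\simplelimit\tau_{\rho_\infty}$ gives $d(o,\rho_k(\eta)\cdot o)\to d(o,\rho_\infty(\eta)\cdot o)$; taking the maximum over the finitely many $\eta$ there is $k_0$ with: for all $k\ge k_0$, $\tau_{\rho_k}$ satisfies the local estimate of Step~1. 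By Step~1 every $\rho_k$ with $k\ge k_0$ is $K''$-quasi-isometric, so taking $K':=K''>K$ finishes the proof, all but finitely many $\rho_k$ being $K'$-quasi-isometric.

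\textbf{Expected main obstacle.} The delicate point is Step~1: having the local-to-global principle for quasigeodesics available \emph{without} local compactness of $X$. The classical proofs in proper spaces argue via geodesics issued from a basepoint and compactness of the boundary; here one must instead run the argument purely from the strong-hyperbolicity inequality \eqref{eq:visual_metric_triangular_<} for the Gromov product, e.g. through a "string of geodesics with small consecutive Gromov products" criterion in $\CAT(-1)$-spaces (if a concatenation of geodesic segments has each segment longer than a threshold depending only on the absolute constant, and consecutive Gromov products bounded, then it is an efficient quasigeodesic). Packaging the local estimate into such a criterion, by grouping a geodesic word of $\Gamma$ into blocks of length $\sim L$ so the threshold and Gromov-product bounds are met, is the real work; the rest is bookkeeping plus the elementary remark that finite balls in $\Gamma$ involve only finitely many group elements, along which pointwise convergence is uniform.
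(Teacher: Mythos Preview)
Your proposal is correct and follows essentially the same approach as the paper: invoke a local-to-global principle so that being a $(K+1)$-quasi-isometric embedding on a fixed finite ball in $\Gamma$ forces the orbit map to be a $K'$-quasi-isometric embedding globally, then use pointwise convergence on that finite ball. The paper simply cites \cite[Theorem 3.1.4]{CDP} for the local-to-global step; your concern in the ``expected main obstacle'' is unnecessary, since that result is proved for general Gromov-hyperbolic spaces with no properness assumption, so no ad hoc argument via Gromov products is needed.
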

\cite[Theorem 3.8]{Xu} is only stated for the compact-open topology, but its proof actually gives the previous result. Let us briefly sketch this proof.
\begin{proof}
The key point is a local-to-global principle for quasi-isometric embeddings \cite[Theorem 3.1.4]{CDP}: there exists constants $K',A$ such that for all map $\tau:\Gamma\to X$, if $\tau$ is a $(K+1)$-quasi-isometric embedding on each ball of radius $A$ in $\Gamma$, then $\tau$ is a $K'$-quasi-isometric embedding of $\Gamma$ into $X$. If $\tau$ is moreover equivariant by a representation $\rho:\Gamma\to\Isom\, X$, it is enough to check that $\tau$ is a $(K+1)$-quasi-isometric embedding of the ball in $\Gamma$ centered at the identity and of radius $A$ into $X$.

Now, this ball is finite, and by assumption $\tau_{\rho_\infty}$ is a $K$-isometric embedding of this ball into $X$. As $\tau_{\rho_k}\simplelimit \rho_\infty$, then for $k$ big enough, $\tau_{\rho_k}$ is a $(K+1)$-isometric embedding of this ball into $X$.
By the local-to-global principle, $\tau_{\rho_k}$ is a $K'$-quasi-isometric embedding.
\end{proof}

We will use the fact that images of geodesic rays in $\Gamma$ under a quasi-isometric embedding in $X$ are at quantitatively bounded distance from an actual geodesic in $X$. We tailor the following statement for our needs, but there is a more general version, see \cite[Theorem III.H.1.7]{BridsonHaefliger}.
\begin{theorem}[Stability of geodesics]\label{stability_geodesic}
For each $K>1$, there exists a $C_K>0$ such that for every complete pointed $\CAT(-1)$-space $(X,d,o)$, every $K$-quasi-isometric representation $\rho:\Gamma\to\Isom\, X$, every geodesic ray $(\gamma_i)_{i\geq 0}\subset\Gamma$ with endpoint $\xi\in\partial \Gamma$, we have:
\begin{itemize}
    \item every point in the sequence $\tau_\rho(\gamma_i)$ is at distance at most $C_K$ of the geodesic ray $[o,\tau_\rho(\xi))$ in $X$.
    \item for each $n>0$, every point in the sequence $(\tau_\rho(\gamma_i))_{0\leq i\leq n}$ is at distance at most $C_K$ of the geodesic $[o,\tau_\rho(\gamma_n)]$ in $X$.
\end{itemize}
\end{theorem}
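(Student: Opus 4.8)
The plan is to reduce the statement to the classical Morse lemma on stability of quasi-geodesics in $\delta$-hyperbolic geodesic spaces, the key point being that every complete $\CAT(-1)$-space is $\delta$-hyperbolic with a \emph{universal} constant $\delta$: indeed inequality \eqref{eq:visual_metric_triangular_<} gives $\langle x,z\rangle_w\geq \min\bigl(\langle x,y\rangle_w,\langle y,z\rangle_w\bigr)-\log 2$ for all $x,y,z,w\in X$. This uniformity is exactly what will guarantee that the constant $C_K$ depends on $K$ alone and not on the particular pointed space $(X,d,o)$.

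First I would turn the discrete orbit of a geodesic ray into a genuine quasi-geodesic path of $X$. Let $(\gamma_i)_{i\geq 0}$ be a geodesic ray in $\Gamma$, so $d_\Gamma(\gamma_i,\gamma_j)=|i-j|$; since $\rho$ is $K$-quasi-isometric, the map $i\mapsto\tau_\rho(\gamma_i)$ is a $K$-quasi-isometric embedding of $\N$ into $X$, and in particular consecutive points satisfy $d(\tau_\rho(\gamma_i),\tau_\rho(\gamma_{i+1}))\leq 2K$. Concatenating the geodesic segments $[\tau_\rho(\gamma_i),\tau_\rho(\gamma_{i+1})]$ — which exist and are unique, $X$ being regular hyperbolic — produces a continuous path $c\colon[0,\infty)\to X$ with $c(i)=\tau_\rho(\gamma_i)$, and a routine verification shows $c$ is a $(K',C')$-quasi-geodesic with $K',C'$ depending only on $K$.

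For the second bullet, fix $n$ and apply the Morse lemma \cite[Theorem III.H.1.7]{BridsonHaefliger} to the finite quasi-geodesic $c|_{[0,n]}$, whose endpoints are $o=\tau_\rho(\gamma_0)$ and $\tau_\rho(\gamma_n)$: since $X$ is $\delta$-hyperbolic for this universal $\delta$, this yields a constant $C_K=C_K(K',C')$ such that $c([0,n])$, and a fortiori each $\tau_\rho(\gamma_i)$ with $0\leq i\leq n$, lies in the $C_K$-neighborhood of $[o,\tau_\rho(\gamma_n)]$. For the first bullet I would pass to the limit $n\to\infty$. Fix $i$; for each $n\geq i$ let $p_n$ be the nearest-point projection of $\tau_\rho(\gamma_i)$ onto $[o,\tau_\rho(\gamma_n)]$ (unique, by $\CAT(0)$ convexity), so $d(\tau_\rho(\gamma_i),p_n)\leq C_K$ and the parameter $t_n:=d(o,p_n)$ stays in the bounded interval $[0,\,d(o,\tau_\rho(\gamma_i))+C_K]$. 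The ray $(\gamma_i)$ converges to its endpoint $\xi\in\partial\Gamma$ and $\tau_\rho$ extends continuously to $\partial\Gamma\to\partial X$, so $\tau_\rho(\gamma_n)\to\tau_\rho(\xi)$ in $X\cup\partial X$; by continuity of geodesics with respect to their endpoints (regularity of $X$), the geodesics $[o,\tau_\rho(\gamma_n)]$ converge to $[o,\tau_\rho(\xi))$ uniformly on bounded parameter ranges. Extracting a subsequence along which $t_n\to t_\infty$, the points $p_n$ converge to the point $p_\infty$ at parameter $t_\infty$ on $[o,\tau_\rho(\xi))$, whence $d(\tau_\rho(\gamma_i),p_\infty)\leq C_K$, which is the claim.

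The step I expect to be the main obstacle is this last limiting argument: because $X$ is not assumed proper, one cannot extract limits of geodesics or of projection points via Arzelà–Ascoli, so the argument must instead rely on the built-in continuity of geodesics in regular hyperbolic spaces together with the a priori boundedness of the parameters $t_n$. The only other point demanding (routine) care is checking that every constant introduced is genuinely independent of $X$, which rests on the universal hyperbolicity constant of $\CAT(-1)$-spaces noted at the outset.
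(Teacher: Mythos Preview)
The paper does not prove this theorem: it states it as a tailored version of the classical Morse lemma and simply refers to \cite[Theorem III.H.1.7]{BridsonHaefliger}. Your proposal is exactly the expected unpacking of that reference --- universal $\delta$-hyperbolicity of $\CAT(-1)$-spaces, taming the discrete quasi-geodesic, the Morse lemma for the finite segment, and a limiting argument for the ray --- and it is correct; your care with the non-proper case (extracting on the real parameter $t_n$ rather than on points of $X$, then invoking continuity of geodesics in regular hyperbolic spaces) is precisely the right way to avoid Arzel\`a--Ascoli.
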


We will need in several places approximation results for limit sets. The previous result translates into such a useful estimate which approximates a point $\tau_\rho(\xi) = \lim \tau_\rho(\gamma_i)$ by an endpoint $\zeta$ of a geodesic ray in $X$ going
through the point $\tau_\rho(\gamma_i)$ of the orbit of $o$:
\begin{proposition}\label{approxlimitset2}
For each $K>1$, for every complete pointed $\CAT(-1)$-space $(X,d,o)$, every $K$-quasi-isometric representation $\rho:\Gamma\to\Isom\, X$, every geodesic ray $(\gamma_i)_{i\geq 0}\subset\Gamma$ with endpoint $\xi\in\partial \Gamma$, we have
for any $i\geq 0$, for any $\zeta \in \partial X$ such that $\tau_\rho(\gamma_i)\in [o,\zeta)$:
$$d_o(\zeta,\tau_\rho(\xi))\leq c_{C_K} e^{d(o,\tau_\rho(\gamma_i))} \leq  c_{C_K}e^{K}e^{-\frac iK}.$$
\end{proposition}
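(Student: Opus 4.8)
The plan is to package two facts established above: the stability of geodesics (\Cref{stability_geodesic}) and the shadow diameter estimate (\Cref{prop:diam_shadows}); the exponential decay in $i$ then comes from the $K$-quasi-isometry lower bound. (Note that the exponent in the first displayed inequality should read $e^{-d(o,\tau_\rho(\gamma_i))}$.)

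First I would write $z_i:=\tau_\rho(\gamma_i)$ and $\eta:=\tau_\rho(\xi)$. By the first item of \Cref{stability_geodesic}, $z_i$ lies within distance $C_K$ of the geodesic ray $[o,\eta)$, so there is a point $w_i\in[o,\eta)$ with $d(z_i,w_i)\leq C_K$; in particular $w_i\in\mathcal B(z_i,C_K)$, and since the geodesic ray $[o,\eta)$ passes through $w_i$ we get $\eta\in\pi^{\partial X}_o(\mathcal B(z_i,C_K))$. On the other hand, by hypothesis $z_i\in[o,\zeta)$, and $z_i$ is the centre of $\mathcal B(z_i,C_K)$, so the geodesic ray $[o,\zeta)$ meets this ball and $\zeta\in\pi^{\partial X}_o(\mathcal B(z_i,C_K))$ as well. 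Thus both $\zeta$ and $\eta$ belong to a single shadow, and applying \Cref{prop:diam_shadows} with $r=C_K$ and centre $z_i$ gives
$$d_o(\zeta,\tau_\rho(\xi))\;\leq\;\diam_o\big(\pi^{\partial X}_o(\mathcal B(z_i,C_K))\big)\;\leq\;c_{C_K}\,e^{-d(o,\tau_\rho(\gamma_i))},$$
which is the first inequality.

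For the second inequality I would use that $\tau_\rho(e)=o$ and that $(\gamma_i)$ is a geodesic ray in $\Gamma$, so $|\gamma_i|_\Gamma=i$. Since $\tau_\rho$ is a $K$-quasi-isometric embedding (\Cref{def:convex_cocompact}, with $C=K$), its lower bound gives $d(o,\tau_\rho(\gamma_i))=d(\tau_\rho(e),\tau_\rho(\gamma_i))\geq \tfrac1K i-K$, hence $e^{-d(o,\tau_\rho(\gamma_i))}\leq e^{K}e^{-i/K}$, and the asserted chain follows.

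I do not anticipate a serious obstacle: the statement is essentially a repackaging of \Cref{stability_geodesic} and \Cref{prop:diam_shadows}. The only technical point deserving a line is that the infimum distance from $z_i$ to the ray $[o,\eta)$ is (at least approximately) attained at an honest point $w_i$ of the ray, so that $w_i$ genuinely lies in a ball centred at $z_i$; in a complete $\CAT(-1)$-space the geodesic ray is closed so this is automatic, and in any case one could replace $\mathcal B(z_i,C_K)$ by $\mathcal B(z_i,C_K+1)$ and rename the constant. One should also note that the constant $c_{C_K}$ from \Cref{prop:diam_shadows} is uniform in the space $X$ (and in $z_i$), which is what allows the conclusion to hold "for every complete pointed $\CAT(-1)$-space"; this is clear from its proof via the universal estimates of \cite{DSU}.
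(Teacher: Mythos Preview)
Your proof is correct and follows essentially the same route as the paper: both place $\zeta$ and $\tau_\rho(\xi)$ in the shadow $\pi_o^{\partial X}(\mathcal B(\tau_\rho(\gamma_i),C_K))$ via \Cref{stability_geodesic}, apply \Cref{prop:diam_shadows} with $r=C_K$, and then use the $K$-quasi-isometry lower bound $d(o,\tau_\rho(\gamma_i))\geq i/K-K$. You also correctly flag the sign typo in the exponent.
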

\begin{proof}
From \Cref{stability_geodesic}, we know that the geodesic ray $[o,\tau_\rho(\xi))\subset X$ meets the 
ball of radius $C_K$ around
$\tau_\rho(\gamma_i)$. In other terms, both $\tau_\rho(\xi)$ and $\zeta$ belong to 
$\pi_o^{\partial X}(\mathcal B(\tau_\rho(\gamma_i),C_K))$.

Moreover, from the $K$-quasi-isometric property for $\rho$, we deduce that 
$$d(o,\tau_\rho(\gamma_i))\geq \frac 1K |\gamma|_\Gamma - K\geq \frac iK - K.$$
We conclude the proof using \Cref{prop:diam_shadows}, with $r = C_K$:
\begin{equation*}
    d_o(\tau_\rho(\xi),\zeta) \leq \diam_o(\pi_o^{\partial X}(\mathcal B(\tau_\rho(\gamma_i)),C_K))
    \leq  e^{-d(o,\tau_\rho(\gamma_i))}
    \leq c_{C_K}e^{K}e^{-\frac iK}.
\end{equation*}
\end{proof}

This ends our review of needed facts about $\CAT(-1)$-geometry and quasi-isometric representations. We can now tackle the proof of the extension of Bowen theorem.

\section{Convergence of Hausdorff dimensions for Schottky actions in CAT(-1)-spaces}\label{sec:Bowen-extension}

Throughout this section, we consider a pointed complete $\CAT(-1)$ metric space $(X,d,o)$, as 
presented in the previous section. Recall that we do \emph{not} assume it is locally compact. 
With respect to the previous section, we restrict our attention to $\Gamma = \F_r$, the free group 
over $r$ generators $s_1,\ldots,s_r$. Let $\mathrm{Cay}_r$ be the Cayley graph of $\F_r$, 
a complete $2r$-tree with edges of length $1$. 
We define, using the notion of quasi-isometric representation (\Cref{def:convex_cocompact}):
\begin{definition}[Schottky groups in $\Isom\,  X$]\label{def:Schottky}
A $K$-quasi-isometric representation $\rho : \F_r\to \Isom\, X$ is called $K$-Schottky.

A subgroup of $\Isom\, X$ is called \emph{Schottky} if it is the image of such a representation.
\end{definition}
If there are open subsets $D_i^\pm$ in $\partial X$ \emph{whose convex hulls in $X\cup \partial X$ have disjoint closures} such that each generator $s_i$ sends the exterior of $D_i^-$ inside $D_i^+$ and its inverse sends the exterior of $D_i^+$ inside $D_i^-$, then one can construct such a quasi-isometric embedding $\mathrm{Cay}_r\to X$ by looking at the orbit of any point outside all convex hulls of $D_i$. It is not clear for us if for any complete $\CAT(-1)$ space $X$ and any Schottky subgroup, such $D_i^\pm$ always exist in $\partial X$. From Maskit theorem \cite{Maskit}, this is true for Kleinian groups in $\Isom\,\H^N$. Moreover, in the special case of $X=\Isom\,\H^\infty$, Xu \cite[Theorem 1.1]{Xu}, proves that our definition is equivalent to the fact that the convex hull of the limit set $\Lambda_\rho$ is locally compact and acted on cocompactly by $\rho(\F_r)$.

The proof of \Cref{{thm:bowen-CAT(-1)}} follows closely the original proof of Bowen, substituting $\CAT(-1)$ arguments to original conformal estimates.

\subsection{Gibbs measures and coding of the limit set}\label{sec:bowen}

Let us first recall the following classical Lemma that we will use to estimate Hausdorff dimensions. For $(\Lambda,d)$ a metric space, we denote by $B_\Lambda(x,r)$ the ball in $\Lambda$ of radius $r$ centered at $x$.
\begin{lemma}[{\cite[Proposition 4.9]{Fa}}]\label{hausdorffmeasure}
Let $(\Lambda, d)$ be a compact metric space. Assume that there exist positive constants
$\delta, c, C$ and a finite mass Borel measure $m$ on $\Lambda$ such that for every $x\in \Lambda$ and $0<r<\epsilon$ we have: $$cr^\delta \leq m(B(x,r)) \leq C r^\delta.$$

Then we have $\Hdim \Lambda = \delta$.
\end{lemma}
In order to compute the Hausdorff dimension of the limit set of a Schottky group, we will find
a measure as in Lemma \ref{hausdorffmeasure}. 

\subsubsection{Shift and cylinders in $\partial \F_r$}

The boundary $\partial \F_r$ of the free group 
$\F _r$ over the set of $r$ generators $S=\{s_1,\ldots, s_r\}$ is the set of infinite reduced words in the generators:
$$\partial \F_r = \{\sigma_1 \sigma _2...\sigma _n..... \,| \, \sigma_i \in S\, ,\, \sigma_i \sigma _{i+1} \neq 1\}.$$
Let $d_1$ be the ultrametric distance on $\partial \F_r$ defined for $\zeta = (\sigma _i)_{i\in \N _{>0}}$ and
$\zeta' = (\sigma _i ')_{i\in \N _{>0}}$ by:
\begin{equation*}
d_1(\zeta, \zeta ') := e^{-n},
\end{equation*}
where $n$ is the largest
integer such that $\sigma _i = \sigma _i '$ for $i = 1,...,n$. It coincides with the visual distance on the boundary of the tree $\mathrm{Cay}_r$ as a $\CAT(-1)$-space, see \Cref{sec:visualmetric}. We will repeatedly use the geodesic ray in $\F_r$ from the identity to a boundary point $\zeta\in\partial\F_r$, we define a notation for it:
\begin{definition}\label{def:g_k_zeta}
Let $\zeta = (\sigma_1,\ldots)\in\partial \F_r$. For $k\geq 1$, we denote the points in the geodesic ray $[e,\zeta)$ by 
$$g_k(\zeta) := \sigma_1\cdots \sigma_k.$$
Moreover, for $\sigma _1, \ldots, \sigma _n$, we define the \emph{cylinder} $C(\sigma_1 \cdots \sigma_n)$ by:
$$C(\sigma_1 \cdots \sigma_n):= \{ \zeta\in\partial\F_r \,| \, g_n(\zeta)=\sigma_1\cdots\sigma_n\}.$$
\end{definition}
We can reinterpret the distance $d_1$: for two points $\xi,\zeta\in\partial \F_r$, we have
\begin{equation*}
d_1(\xi,\zeta)=e^{-n}\textrm{ where }n\textrm{ is the biggest integer with } g_n(\xi)=g_n(\zeta).
\end{equation*}
A cylinder $C(\sigma_1 \cdots \sigma_n)$ is the set of $\zeta$ such that the geodesic $[o,\zeta)$ goes through $\sigma_1,\cdots,\sigma_n$, or the shadow $\pi_o^{\partial\F_r}(\{\sigma_1 \cdots \sigma_n\})$ cast by the singleton $\sigma_1 \cdots \sigma_n$.
It is an open and closed subset of $\partial \F_r$ and may also be described as the ball, for $d_1$, of radius $e^{-n}$ and center any of its point.

A natural dynamical system on $\partial \F_r$ is the shift:
$$\begin{matrix}
    \mathcal{S} : &\partial \F_r &\to& \partial \F_r\\
    &\zeta = (\sigma _1 , \sigma_2,\ldots)&\mapsto& \mathcal S \zeta = (\sigma_2,\ldots).
\end{matrix}$$ 
It is expanding, by a factor $e^1$, around any point.

\subsubsection{Schottky actions and coding of the limit set}\label{coding}

Let us consider a $K$-Schottky representation $\rho: \F_r \to \Isom\, X$
where $(X,d,o)$ is a complete pointed $\CAT (-1)$-space, i.e. the orbit map $\tau_\rho$ is a $K$-quasi isometric embedding, see \Cref{def:convex_cocompact}. The extension of $\tau_\rho$ to the boundary is given explicitly in this case:
$$\zeta\in\partial\F_r \mapsto  \tau_\rho (\zeta) := \lim_k \tau_\rho(g_k(\zeta))\in\partial X.$$
The limit set $\Lambda _\rho \subset \partial X$ is the image $\tau_\rho(\partial \F_r)$.

The ideal boundary 
$\partial X$ is equipped with the visual distances $d_{o}$, defined with Gromov products see \Cref{sec:visualmetric}. 
Note that if we change the origin $o$ in $o'$, the two distances $d_o$ and $d_{o'}$ are equivalent, so give the same Hausdorff dimension. We consider the Hausdorff dimension of the limit sets $\Lambda _\rho$ with respect to these distances. 

The coding given by the map $\tau_\rho$ of the limit sets of Schottky groups is essential for the continuity of the Hausdorff dimension. The following Lemma may be seen as a variation on  \Cref{approxlimitset2}. Recall the constant $C_K$ defined by \Cref{stability_geodesic}.
\begin{lemma}\label{homeoH}
For every $K$-Schottky representation
$\rho : \F_r \to \Isom\, X$, for all $\xi$ and $\zeta$ in $\partial\F_r$ with $d_1(\xi,\zeta)=e^{-n}$, we have:
$$e^{-3C_K} e^{-d(o,\tau_\rho(g_n(\zeta)))}\leq d_o(\tau_\rho(\xi),\tau_\rho(\zeta))\leq e^{3C_K} e^{-d(o,\tau_\rho(g_n(\zeta)))}.$$

In particular, we have $d_o(\tau_\rho(\xi),\tau_\rho(\zeta))\leq e^{3C_K+K} d_1(\xi,\zeta)^{\frac 1K}$, and $\tau_\rho$ is a $\frac 1K$-H\" older homeomorphism from $\partial \F_r$ to $\Lambda_\rho$. 
\end{lemma}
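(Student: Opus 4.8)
The plan is to reduce everything to the key estimate
$$e^{-3C_K}\,e^{-d(o,\tau_\rho(g_n(\zeta)))}\;\le\; d_o(\tau_\rho(\xi),\tau_\rho(\zeta))\;\le\; e^{3C_K}\,e^{-d(o,\tau_\rho(g_n(\zeta)))},$$
since the H\"older statement then follows by a short computation. Indeed, once the two-sided bound is in hand, the $K$-quasi-isometric property gives $d(o,\tau_\rho(g_n(\zeta)))\ge \tfrac1K\,|g_n(\zeta)|_{\F_r}-K=\tfrac nK-K$, hence
$$d_o(\tau_\rho(\xi),\tau_\rho(\zeta))\le e^{3C_K}e^{-n/K}e^{K}=e^{3C_K+K}\bigl(e^{-n}\bigr)^{1/K}=e^{3C_K+K}\,d_1(\xi,\zeta)^{1/K},$$
so $\tau_\rho$ is $\tfrac1K$-H\"older; that $\tau_\rho$ is a homeomorphism onto $\Lambda_\rho$ is a separate (and easier) point, handled below.

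For the main two-sided estimate, I would argue as follows. Write $\xi=(\sigma_i)$, $\zeta=(\sigma_i')$ with $\sigma_i=\sigma_i'$ exactly for $i\le n$, so $g_n:=g_n(\xi)=g_n(\zeta)$. Both $\tau_\rho(\xi)$ and $\tau_\rho(\zeta)$ are limits of orbit points along geodesic rays in $\F_r$ passing through $g_n$; by the first item of \Cref{stability_geodesic} (applied to the $K$-Schottky, hence $K$-quasi-isometric, representation $\rho$), the geodesic rays $[o,\tau_\rho(\xi))$ and $[o,\tau_\rho(\zeta))$ in $X$ each meet the ball $\mathcal B(\tau_\rho(g_n),C_K)$. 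Hence both boundary points lie in the shadow $\pi_o^{\partial X}(\mathcal B(\tau_\rho(g_n),C_K))$, and \Cref{prop:diam_shadows} with $r=C_K$ yields the upper bound
$$d_o(\tau_\rho(\xi),\tau_\rho(\zeta))\le \diam_o\bigl(\pi_o^{\partial X}(\mathcal B(\tau_\rho(g_n),C_K))\bigr)\le c_{C_K}\,e^{-d(o,\tau_\rho(g_n))},$$
which after absorbing the constant into $e^{3C_K}$ (using $c_{C_K}\le e^{3C_K}$, or else adjusting $C_K$) gives the claimed right-hand inequality. For the lower bound I would instead look one step further: after $g_n$ the two rays diverge, $\sigma_{n+1}\ne\sigma_{n+1}'$, so consider the two orbit points $\tau_\rho(g_ng_1(\mathcal S\xi))=\tau_\rho(g_{n+1}(\xi))$ and $\tau_\rho(g_{n+1}(\zeta))$; equivalently, translate by $\rho(g_n)^{-1}$ and use the invariance \eqref{busemaninvariance} of the Busemann function together with the definition $d_o=e^{-\langle\cdot,\cdot\rangle_o}$. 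The Gromov product $\langle\tau_\rho(\xi),\tau_\rho(\zeta)\rangle_o$ is, up to $O(C_K)$, equal to $\langle\tau_\rho(\xi),\tau_\rho(\zeta)\rangle_o - d(o,\tau_\rho(g_n))$ plus a bounded term measuring the Gromov product of the two continuations seen from $\tau_\rho(g_n)$; the Schottky/ping-pong structure forces this last product to be $O(C_K)$ because the rays lie in distinct Schottky domains (or, more intrinsically, because two $K$-quasigeodesic rays in $\F_r$ branching at $g_n$ have bounded Gromov product at $g_n$, which transfers to $X$ via \Cref{stability_geodesic}). Tracking the constants gives $\langle\tau_\rho(\xi),\tau_\rho(\zeta)\rangle_o\le d(o,\tau_\rho(g_n))+3C_K$, i.e. the desired lower bound on $d_o$.

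The main obstacle, and the step deserving the most care, is the lower bound: getting a \emph{uniform} additive constant $3C_K$ for the Gromov product from above. The upper bound is a soft shadow estimate, but the lower bound requires knowing that the two geodesic rays $[o,\tau_\rho(\xi))$ and $[o,\tau_\rho(\zeta))$ genuinely separate shortly after $\tau_\rho(g_n)$, at a controlled scale; this is where one must use both items of \Cref{stability_geodesic} (the bounded-distance approximation of the finite geodesic segments $[o,\tau_\rho(g_{n+1}(\xi))]$ and $[o,\tau_\rho(g_{n+1}(\zeta))]$) and the thin-triangle inequality \eqref{eq:visual_metric_triangular_<} to convert "the two quasigeodesics in $\F_r$ have Gromov product $n$ at $e$, hence their images have Gromov product $\approx d(o,\tau_\rho(g_n))$ at $o$" into a clean inequality. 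Once this is done, the homeomorphism claim is immediate: the two-sided estimate shows $\tau_\rho$ is a bi-H\"older bijection from $(\partial\F_r,d_1)$ onto $(\Lambda_\rho,d_o)$ (injectivity because $d_o(\tau_\rho(\xi),\tau_\rho(\zeta))>0$ whenever $\xi\ne\zeta$, i.e. whenever $n<\infty$, using that $d(o,\tau_\rho(g_n))<\infty$), and a continuous bijection from the compact space $\partial\F_r$ is automatically a homeomorphism.
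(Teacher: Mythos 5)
Your overall route is close to the paper's, but it differs in packaging and leaves the decisive estimate unproved. The paper handles both inequalities at once with a single tripod computation: for $k>n$ the triangle $(e,g_k(\xi),g_k(\zeta))$ in $\F_r$ is a tripod through $g_n$, so by \Cref{stability_geodesic} (applied to rays from $e$ and, after translating by $\rho$-equivariance, to the third side) the point $p_n=\tau_\rho(g_n)$ lies within $C_K$ of all three geodesics $[o,\tau_\rho(g_k(\xi))]$, $[o,\tau_\rho(g_k(\zeta))]$, $[\tau_\rho(g_k(\xi)),\tau_\rho(g_k(\zeta))]$; the three resulting $2C_K$-approximations of distances give
$|\langle \tau_\rho(g_k(\xi)),\tau_\rho(g_k(\zeta))\rangle_o - d(o,p_n)|\leq 3C_K$, and one concludes by letting $k\to\infty$ and using continuity of the Gromov product. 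Your lower-bound plan (write the Gromov product at $o$ as $d(o,p_n)$ plus the Gromov product of the continuations at $p_n$, up to $O(C_K)$, and bound the latter) is exactly this computation, but you only assert "tracking the constants gives $\ldots \le d(o,\tau_\rho(g_n))+3C_K$" without carrying it out; that bookkeeping is the entire content of the lemma. Moreover, your primary justification for the bounded product at $p_n$ — "the rays lie in distinct Schottky domains" — is not available here: in this paper \emph{Schottky} means only that the orbit map is a $K$-quasi-isometric embedding, and the existence of ping-pong domains $D_i^\pm$ in $\partial X$ is explicitly left open. Your parenthetical intrinsic argument (branching at $g_n$ in $\F_r$ transferred via \Cref{stability_geodesic}) is the correct one, and is what must be made quantitative.

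On the upper bound, your shadow argument via \Cref{prop:diam_shadows} is sound but yields the constant $c_{C_K}$, and the step "using $c_{C_K}\le e^{3C_K}$, or else adjusting $C_K$" is unjustified: $c_{C_K}$ comes from an unspecified estimate in the reference and $C_K$ is fixed by \Cref{stability_geodesic}, so you do not obtain the inequality with the stated constant $e^{3C_K}$ (which is then used quantitatively, e.g. in \Cref{diam}). The tripod computation above avoids this issue because it produces both sides of the estimate simultaneously with the explicit $3C_K$. Your deduction of the Hölder bound from the two-sided estimate, and the homeomorphism claim (bi-Hölder bijection, or continuous bijection from a compact space to a metric space), are fine.
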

\begin{proof}
Take $\xi$ and $\zeta$ as in the statement. In the Cayley graph of $\F_r$, for all $k> n$, the triangle 
$(e, g_k(\xi),g_k(\zeta))$ is a tripod
with $g_n(\zeta)=g_n(\xi)$ the common point to the three sides. Denote by $p_n:= \tau_\rho(g_n(\xi)) = \tau_\rho(g_n\zeta))$ this point.

Apply the orbit map $\tau_\rho$. By \Cref{stability_geodesic}, the point $p_n$ is at distance at most $C_K$ of the three geodesic segments $[o,\tau_\rho(g_k(\xi))]$, $[o,\tau_\rho(g_k(\zeta))]$ and $[\tau_\rho(g_k(\zeta)),\tau_\rho(g_k(\xi))]$, see \Cref{fig:tripod}. It implies:
\begin{equation}\label{3.4}
    |\langle \tau_\rho(g_k(\xi)),\tau_\rho(g_k(\zeta))\rangle_o - d(o,p_n)|\leq 3C_K.
\end{equation}

\begin{figure}[t]
    \centering
    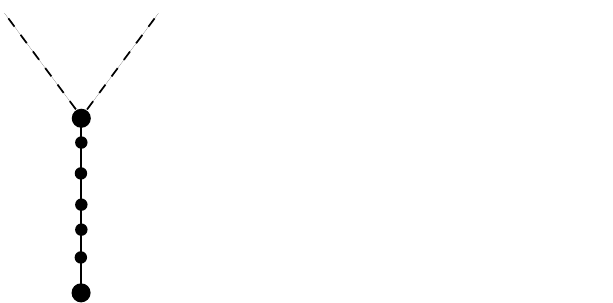
    \caption{A tripod in $\F_r$ (left); its image in $X$ by $\tau_\rho$ (right). In the right hand side picture, the distance between $p_n$ and each of the geodesic is at most $C_K$.}
    \label{fig:tripod}
\end{figure}

Indeed, we have for each of these geodesics in $X$:
\begin{align*}
    |d(o, \tau_\rho(g_k(\xi)))-(d(o,p_n)+d(p_n,\rho(g_k(\xi))o))|&\leq 2C_K\\
    |d(o, \tau_\rho(g_k(\zeta)))-(d(o,p_n)+d(p_n,\tau_\rho(g_k(\zeta))))|&\leq 2C_K\\
    |d(\tau_\rho(g_k(\xi)),\tau_\rho(g_k(\zeta)))-(d(\tau_\rho(g_k(\xi)),p_n)+d(p_n,\tau_\rho(g_k(\zeta))))|&\leq 2C_K
\end{align*}
and these bounds imply \eqref{3.4}.
Letting $k\to\infty$, we have $\tau_\rho(g_k(\xi))\to \tau_\rho(\xi)$ and $\tau_\rho(g_k(\zeta))\to \tau_\rho(\zeta)$. By continuity of the Gromov product, we get the bounds on $d_o(\tau_\rho(\xi),\tau_\rho(\zeta))= \lim_{k\to\infty} e^{-\langle\tau_\rho(g_k(\xi)),\tau_\rho(g_k(\zeta))\rangle_o}$.

The Hölder property then comes from the $K$-isometry property for the orbit map:
$$d(o,\rho(g_n(\zeta))o)\geq \frac 1K d_\Gamma(e,g_n(\zeta))-K=\frac nK-K.$$
\end{proof}
In particular, the image by $\tau_\rho$ of a cylinder $C(\sigma_1\cdots\sigma_n)$ has a diameter at most $e^{3C_K+K} e^{-\frac nK}$.

\subsubsection{Gibbs measures for the shift}\label{sec:frho}

The measure on $\Lambda _ \rho$ as in Lemma \ref{hausdorffmeasure} we are looking for will be defined as $(\tau _{\rho})_*  m_\rho$ where $m_\rho$ is the Gibbs measure on $\partial \F_r$ associated to an appropriate H\"older function 
$f_\rho : \partial \F_r \to \R$ depending on the Schottky representation $\rho$. 

Gibbs measures on $\partial \F_r$ are shift-invariant measures, for the shift $\mathcal S$ defined above, and are characterized by the measures of cylinders. To state it, we need a notation for Birkhoff sums. For a function $f:\partial \F_r\to \R$ and an integer $n$, we denote by $S_n(f)(\zeta) :\partial \F_r\to\R$ the function defined, for $\zeta\in \partial\F_r$, by:
$$S_n(f):=f(\zeta) + f(\mathcal S \zeta) + .....+f (\mathcal S ^{n-1} \zeta).$$
The theory of Gibbs states (see \cite{Bowen-IHES} and references therein) gives\footnote{Here, the actual constants are not pertinent for our work: the notation $m_f (C(\sigma_1\cdots\sigma_n)) \asymp e^{\delta S_{n} f (\zeta)}$ means that there is a constant $c>1$ such that:
$$
\frac 1c e^{\delta S_{n} f (\zeta)}\leq m_f (C(\sigma_1\cdots\sigma_n)) \leq c e^{\delta S_{n} f (\zeta)}.
$$}:
\begin{theorem}\label{gibbs}
Given $f : \partial \F_r \to \R$ a H\"older function, there exists a unique finite $\mathcal S$-invariant measure $m_f$ on $\partial \F_r$ and a unique $\delta >0$ such that for every $n\geq 1$, every cylinder $C(\sigma_1\cdots\sigma_n)$ and every 
$\zeta \in C(\sigma_1\cdots\sigma_n)$, we have
\begin{equation}\label{gibbsmeasure}
m_f (C(\sigma_1\cdots\sigma_n)) \asymp e^{\delta S_{n} f (\zeta)}.
\end{equation}
Moreover, for all $\epsilon >0$, there exists $\alpha$ such that, for every H\"older functions $f_1, f_2$ with $\sup _{\zeta \in \partial \F_r} |f_1 (\zeta) - f_2 (\zeta) | \leq \alpha$, the associated exponents $\delta _1, \delta _2 \in \mathbb R _+$ satisfying (\ref{gibbsmeasure}) verify: $$|\delta _1 - \delta _2| \leq \epsilon.$$
\end{theorem}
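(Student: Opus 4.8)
First I would recognize \Cref{gibbs} as a standard statement of the thermodynamic formalism of subshifts of finite type, applied after a reparametrization of the potential. Note that $(\partial\F_r,\mathcal S)$ is the topologically mixing one-sided subshift of finite type on the $2r$ symbols $s_1^{\pm},\dots,s_r^{\pm}$ whose only forbidden transitions are $s_is_i^{-1}$ and $s_i^{-1}s_i$; since $r\geq 2$, its topological entropy equals $\log(2r-1)>0$. I would also make explicit the hypothesis that is implicit in the statement and satisfied by the potentials $f_\rho$ used later, namely $\sup_{\partial\F_r}f<0$ (without it no $\delta>0$ need exist). For every $t\geq 0$ the potential $tf$ is Hölder, so the classical Ruelle--Bowen theory (see \cite{Bowen-IHES} and the references therein) associates to it a topological pressure $P(tf)$ and a unique ergodic equilibrium probability measure $\mu_{tf}$, with the Gibbs estimate $\mu_{tf}(C(\sigma_1\cdots\sigma_n))\asymp e^{S_n(tf)(\zeta)-nP(tf)}$ valid for every cylinder and every $\zeta$ in it; the independence of the implied constant of the choice of $\zeta$ inside the cylinder is the usual bounded-distortion fact that $|S_n f(\zeta)-S_n f(\zeta')|$ is bounded uniformly whenever $g_n(\zeta)=g_n(\zeta')$, which follows from Hölder continuity of $f$ together with the expansion factor $e$ of $\mathcal S$.

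Next I would choose $\delta$ so that $P(\delta f)=0$. The function $t\mapsto P(tf)$ is convex, equals $\log(2r-1)>0$ at $t=0$, and satisfies $P(t_2f)-P(t_1f)\leq(t_2-t_1)\sup f<0$ for $t_2>t_1$; hence it is strictly decreasing with slope bounded away from $0$, tends to $-\infty$, and has a unique zero $\delta>0$. Setting $m_f:=\mu_{\delta f}$, the Gibbs estimate becomes exactly $m_f(C(\sigma_1\cdots\sigma_n))\asymp e^{\delta S_n f(\zeta)}$, which is \eqref{gibbsmeasure}.

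For uniqueness, suppose a finite $\mathcal S$-invariant measure $m$ and some $\delta'>0$ satisfy \eqref{gibbsmeasure}. Summing over the finitely many length-$n$ cylinders, which partition $\partial\F_r$, gives $m(\partial\F_r)\asymp\sum_{|C|=n}e^{\delta'S_n f}$; since the exponential growth rate of the right-hand side is $P(\delta'f)$ by definition of the pressure, applying $\tfrac1n\log$ and letting $n\to\infty$ forces $P(\delta'f)=0$, whence $\delta'=\delta$ by the strict monotonicity above. Two measures satisfying \eqref{gibbsmeasure} with the same exponent $\delta$ are then mutually absolutely continuous with Radon--Nikodym derivative bounded above and below; being both ergodic and $\mathcal S$-invariant they are proportional, and normalizing to a probability measure pins down $m_f$.

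The delicate point will be the continuity of $\delta$ in $f$. I would deduce it from the implicit relation $P(\delta(f)f)=0$ and two quantitative inputs: the pressure is $1$-Lipschitz for the sup-norm, $|P(\psi_1)-P(\psi_2)|\leq\|\psi_1-\psi_2\|_\infty$, and $t\mapsto P(tf)$ decreases with slope at most $\sup f<0$. If $\|f_1-f_2\|_\infty\leq\alpha$, then $|P(\delta_1f_2)|=|P(\delta_1f_2)-P(\delta_1f_1)|\leq\delta_1\alpha$, and the slope lower bound turns this into a bound of the shape $|\delta_1-\delta_2|\lesssim\delta_1\alpha/|\sup f_2|$, which is $\leq\epsilon$ once $\alpha$ is small. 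I expect the only real subtlety to be that such a bound is uniform only when $\delta$ and $\sup f$ stay in a fixed bounded range, which holds for the family of potentials relevant here; making this uniformity explicit (or restricting the statement to that family) is the one step needing genuine care, everything else being a transcription of the classical Gibbs-state theory.
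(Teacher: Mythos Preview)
The paper does not actually prove \Cref{gibbs}: it simply states it as a consequence of the classical theory of Gibbs states, with the reference ``see \cite{Bowen-IHES} and references therein'', and uses it as a black box. Your proposal is therefore not competing with a proof in the paper but rather filling in what the authors delegate to the literature.

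That said, your sketch is correct and follows exactly the standard route one finds in Bowen's work: identify $(\partial\F_r,\mathcal S)$ with a topologically mixing subshift of finite type, invoke the Ruelle--Perron--Frobenius/Gibbs theory for H\"older potentials, define $\delta$ as the unique zero of $t\mapsto P(tf)$, and obtain continuity of $\delta$ from the $1$-Lipschitz dependence of pressure on the potential together with the strict monotonicity of $t\mapsto P(tf)$. Your observation that the statement implicitly requires $\sup f<0$ (or at least a negative upper bound on Birkhoff averages) is pertinent and is indeed satisfied by the potentials $f_\rho$ the paper works with, as one sees from the estimate \eqref{estimate_birkhoff}. Likewise, your caveat that the continuity bound $|\delta_1-\delta_2|\leq\epsilon$ is only uniform when $\delta$ and $|\sup f|$ range in a compact set is well taken; in the paper this uniformity is implicitly guaranteed because all the $f_{R_k}$ are uniformly close to the fixed $f_{R_\infty}$ (Proposition \ref{prop:convergenceLimitSets}), so one is really working in a sup-norm neighborhood of a single potential.
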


We still need to choose the H\"older functions to work with. Given the Schottky representation $\rho$ of 
$\F_r$ in $\Isom\, X$, 
we consider $f_\rho : \partial \F_r \to \R$ for $\zeta = (\sigma _1, \ldots) \in \partial \F_r$ by
\begin{equation}\label{frho}
f_\rho (\zeta) := B(\rho (\sigma _1)o,o,\tau_\rho (\zeta)).
\end{equation}
Notice that $f_\rho$ is H\"older by Lemma \ref{homeoH} and 
the Lipschitz property of the Busemann function w.r.t. its last variable. 
We denote by $\delta_\rho$ the associated exponent given by \Cref{gibbs}. We have a formula for the Birkhoff sums of $f_\rho$, using Busemann functions:
\begin{lemma}\label{birkhoff-busemann}
Let $\zeta = (\sigma_1,\ldots,\sigma_n,\ldots)\in\partial \F_r$. Then we have:
$$S_n f_\rho (\zeta) = B(\rho (g_n(\zeta)) o, o, \zeta).$$
\end{lemma}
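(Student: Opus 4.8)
The plan is to prove the identity by induction on $n$, using the cocycle (additivity) property of the Busemann function together with the equivariance relation \eqref{busemaninvariance}. Recall from \Cref{def:g_k_zeta} that $g_n(\zeta)=\sigma_1\cdots\sigma_n$, so that $g_n(\zeta) = \sigma_1\cdot g_{n-1}(\mathcal S\zeta)$, and note that $\mathcal S^{j}\zeta = (\sigma_{j+1},\sigma_{j+2},\ldots)$, hence $f_\rho(\mathcal S^{j}\zeta) = B\big(\rho(\sigma_{j+1})o,\,o,\,\tau_\rho(\mathcal S^{j}\zeta)\big)$. The key algebraic fact I would isolate first is the additivity of $B$ in its "middle position": for points $x,y,z\in X$ and $\xi\in\partial X$,
$$B(x,z,\xi) = B(x,y,\xi) + B(y,z,\xi),$$
which is immediate from the definition $B(x,y,\xi)=d(x,\xi)-d(y,\xi)$ at finite points and passes to the boundary by the continuity/relations in \cite[Prop.~3.3.3]{DSU}.

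Next I would carry out the induction. The base case $n=1$ is exactly the definition \eqref{frho}, since $g_1(\zeta)=\sigma_1$ and $\tau_\rho(\zeta)$ is the boundary point in question: $S_1 f_\rho(\zeta) = f_\rho(\zeta) = B(\rho(\sigma_1)o,o,\tau_\rho(\zeta)) = B(\rho(g_1(\zeta))o,o,\zeta)$, where here "$\zeta$" in the last slot is shorthand for $\tau_\rho(\zeta)\in\partial X$ (I would make this notational convention explicit, since the statement writes $\zeta$ for $\tau_\rho(\zeta)$). For the inductive step, write $S_n f_\rho(\zeta) = f_\rho(\zeta) + S_{n-1} f_\rho(\mathcal S\zeta)$; apply the inductive hypothesis to $\mathcal S\zeta$ to get $S_{n-1}f_\rho(\mathcal S\zeta) = B\big(\rho(g_{n-1}(\mathcal S\zeta))o,\,o,\,\tau_\rho(\mathcal S\zeta)\big)$; then use equivariance \eqref{busemaninvariance} with $g=\rho(\sigma_1)$ to rewrite this as $B\big(\rho(\sigma_1)\rho(g_{n-1}(\mathcal S\zeta))o,\,\rho(\sigma_1)o,\,\rho(\sigma_1)\tau_\rho(\mathcal S\zeta)\big) = B\big(\rho(g_n(\zeta))o,\,\rho(\sigma_1)o,\,\tau_\rho(\zeta)\big)$, using $\rho(\sigma_1)g_{n-1}(\mathcal S\zeta) = g_n(\zeta)$ and the fact that $\rho(\sigma_1)$ maps $\tau_\rho(\mathcal S\zeta)$ to $\tau_\rho(\zeta)$ (this last point follows from $\tau_\rho(\zeta)=\lim_k\rho(g_k(\zeta))o = \lim_k \rho(\sigma_1)\rho(g_{k-1}(\mathcal S\zeta))o = \rho(\sigma_1)\tau_\rho(\mathcal S\zeta)$, by continuity of $\rho(\sigma_1)$ on $X\cup\partial X$). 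Finally, add $f_\rho(\zeta) = B(\rho(\sigma_1)o,o,\tau_\rho(\zeta))$ to this and invoke the additivity of $B$ in the middle variable to collapse $B(\rho(g_n(\zeta))o,\rho(\sigma_1)o,\tau_\rho(\zeta)) + B(\rho(\sigma_1)o,o,\tau_\rho(\zeta))$ into $B(\rho(g_n(\zeta))o,o,\tau_\rho(\zeta))$, which is the claimed formula for $S_n f_\rho(\zeta)$.

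I do not anticipate a genuine obstacle here; the only point requiring minor care is the bookkeeping at the boundary — namely checking that the additivity relation for $B$ and the equivariance \eqref{busemaninvariance} are valid when the third argument is an ideal point rather than a point of $X$, and that $\rho(\sigma_1)\tau_\rho(\mathcal S\zeta)=\tau_\rho(\zeta)$. All of these are covered by the extension of $B$ to $x,y\in X,\ z\in\partial X$ recorded after \Cref{def:GromovBusemann} and by the continuity of isometries on $X\cup\partial X$ noted in \Cref{sec:convex-cocompact}, so the argument is purely formal once the base case and the equivariant identity $\rho(\sigma_1)g_{n-1}(\mathcal S\zeta)=g_n(\zeta)$ are in place.
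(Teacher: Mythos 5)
Your proof is correct and uses exactly the same ingredients as the paper's: the equivariance relation \eqref{busemaninvariance}, the identity $\rho(\sigma_1)\tau_\rho(\mathcal S\zeta)=\tau_\rho(\zeta)$ (the paper's $\tau_\rho(\mathcal S^k\zeta)=\rho(g_k(\zeta))^{-1}\tau_\rho(\zeta)$), and the cocycle additivity of the Busemann function; the only difference is that you organize the computation as an induction on $n$, whereas the paper rewrites the full Birkhoff sum at once and telescopes, which is the same argument unrolled.
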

\begin{proof}
By definition of $f_\rho$, we have
\begin{equation}\label{birkhoffcocycle}
S_n f_\rho (\zeta) = \sum_{k=1}^n B(\rho (\sigma_k) o, o, \tau_\rho (\mathcal{S} ^{k-1}\zeta)).
\end{equation}
Notice that by construction we have the equalities
$\tau_\rho (\mathcal S^k \zeta) = (\rho (g_k(\zeta)) )^{-1} \tau_\rho (\zeta)$ and $g_{k-1}(\zeta)\sigma_k=g_k(\zeta)$. The invariance property (\ref{busemaninvariance}) applied to each terms of \eqref{birkhoffcocycle} leads to
\begin{equation}\label{birkhoffcocycle2}
S_n f_\rho (\zeta) = \sum_{k=1}^n B(\rho (g_k(\zeta)) o, \rho (g_{k-1}(\zeta))o, \tau_\rho (\zeta)).
\end{equation}
Using repeatedly the cocycle relation $B(x,y,\zeta) =B(x,z,\zeta) +B(z,y,\zeta)$ for the Busemann function, \eqref{birkhoffcocycle2} simplifies into the statement of the Lemma.
\end{proof}

We affirm, following Bowen, that $(\tau_\rho)_* m_{f_\rho}$ gives measure roughly $r^{\delta_\rho}$ to balls of radius $r$ in $\Lambda_\rho$, thus allowing to apply \Cref{hausdorffmeasure}. This is not quite clear at this point: Eq.  \eqref{gibbsmeasure} gives an estimate for the images of cylinders, but we need to compare balls in $\Lambda_\rho$ and images of cylinders. This is done in the following section.

\subsection{Balls and cylinders in limit sets}

\subsubsection{Measuring images of cylinders}

An important step for Bowen's strategy is the quasi-conformal character of $\tau_\rho$: the image by $\tau_\rho$ of cylinders is roughly a ball of radius $e^{S_n(f_\rho)}$. Recall that we assume that the orbit map $\tau_\rho$ is a $K$-quasi-isometry, for some $K\geq 1$. Denote by $\mathcal B^{\Lambda_\rho}_o(\tau_\rho \zeta, r)$ the ball of radius $r$ in $\Lambda _\rho$ centered at $\tau_\rho \zeta$.
\begin{proposition}\label{diam}
For every $n\geq 1$, every cylinder $C(\sigma_1\cdots\sigma_n)$ and every $\zeta \in C(\sigma_1\cdots\sigma_n)$, we have
$$
\mathcal B_o^{\Lambda_\rho} (\tau_\rho\zeta, e^{-7C_K} e^{S_{n} f_\rho (\zeta)})\subset \tau_\rho (C(\sigma _1\cdots \sigma _n)) \subset \mathcal B_o^{\Lambda_\rho} (\tau_\rho\zeta, e^{7C_K}e^{S_{n} f_\rho (\zeta)}),
$$
\end{proposition}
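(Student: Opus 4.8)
The plan is to relate three quantities: the diameter of the image cylinder $\tau_\rho(C(\sigma_1\cdots\sigma_n))$, the quantity $e^{S_n f_\rho(\zeta)}$, and the distance $d(o,\tau_\rho(g_n(\zeta)))$. The bridge between the first and third is \Cref{homeoH}, which says that for $\xi \in C(\sigma_1\cdots\sigma_n)$, i.e. $d_1(\xi,\zeta)\le e^{-n}$, we have $d_o(\tau_\rho(\xi),\tau_\rho(\zeta)) \asymp e^{-d(o,\tau_\rho(g_n(\zeta)))}$ up to a factor $e^{3C_K}$; this controls both the diameter of the image cylinder from above and — applied to a point $\xi$ whose first $n$ letters are $\sigma_1\cdots\sigma_n$ but which then branches off in a different direction, so that $d_1(\xi,\zeta)$ is exactly $e^{-n}$ — from below. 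So the first step is to record that $\diam_o \tau_\rho(C(\sigma_1\cdots\sigma_n)) \asymp e^{-d(o,\tau_\rho(g_n(\zeta)))}$ with multiplicative constant $e^{3C_K}$ (or a slightly worse one if one wants the two-sided "roughly a ball" statement; a ball of radius roughly half the diameter sits inside). The bridge between $e^{S_n f_\rho(\zeta)}$ and the third quantity is \Cref{birkhoff-busemann}: $S_n f_\rho(\zeta) = B(\rho(g_n(\zeta))o, o, \zeta)$.

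Next I would compare $B(\rho(g_n(\zeta))o,o,\zeta)$ with $-d(o,\tau_\rho(g_n(\zeta)))$. By definition the Busemann function $B(x,o,\zeta) = \lim_{w\to\zeta}(d(x,w)-d(o,w))$, and the point $x=\tau_\rho(g_n(\zeta))=\rho(g_n(\zeta))o$ lies, by \Cref{stability_geodesic}, within $C_K$ of the geodesic ray $[o,\tau_\rho(\zeta))$. For a point exactly on that ray at distance $t$ from $o$ one has $B(x,o,\zeta) = -t$ exactly; being $C_K$-close changes this by at most, say, $2C_K$ (via the Lipschitz property of $x\mapsto B(x,o,\zeta)$ stated in the excerpt after \Cref{def:GromovBusemann}, together with the exact computation at the nearest point on the ray). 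Hence $S_n f_\rho(\zeta) = -d(o,\tau_\rho(g_n(\zeta))) + O(C_K)$, so $e^{S_n f_\rho(\zeta)} \asymp e^{-d(o,\tau_\rho(g_n(\zeta)))}$ with a controlled constant. Chaining this with the first step gives $\diam_o \tau_\rho(C(\sigma_1\cdots\sigma_n)) \asymp e^{S_n f_\rho(\zeta)}$, and bookkeeping the constants so that everything fits inside $e^{\pm 7C_K}$ yields the claimed two inclusions — the outer one directly from the diameter bound, the inner one by noting the image cylinder contains a ball around $\tau_\rho\zeta$ of radius comparable to its diameter.

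The main obstacle I anticipate is the inner inclusion, i.e. showing $\tau_\rho(C(\sigma_1\cdots\sigma_n))$ actually \emph{contains} a ball of radius $\gtrsim e^{S_n f_\rho(\zeta)}$ around $\tau_\rho\zeta$, rather than merely having large diameter: a priori the image could be a "thin" set of large diameter with $\tau_\rho\zeta$ near its edge. This is where the Schottky/branching structure is essential — because $\F_r$ has $r\ge 2$ generators and at every vertex of the geodesic $[e,\zeta)$ one can branch in at least one direction different from both $\sigma_k$ and $\sigma_k^{-1}$, the cylinder $C(\sigma_1\cdots\sigma_n)$ contains sub-cylinders branching off at \emph{each} level, and \Cref{homeoH} applied at level $n$ to such a branch point shows the image contains points at distance $\asymp e^{-d(o,\tau_\rho(g_n(\zeta)))}$ from $\tau_\rho\zeta$ in "enough directions"; more carefully, for any $\xi\in C(\sigma_1\cdots\sigma_n)$ the estimate $d_o(\tau_\rho\xi,\tau_\rho\zeta)\le e^{3C_K}e^{-d(o,\tau_\rho(g_n(\zeta)))}$ is what controls the diameter, and the complement — points \emph{not} in the cylinder — are pushed to distance $\gtrsim e^{-d(o,\tau_\rho(g_n(\zeta)))}$ away by the same lemma applied at level $<n$, which is exactly what forces a genuine ball inside. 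I would carry out this last point by taking any $\eta\in\partial\F_r\setminus C(\sigma_1\cdots\sigma_n)$, so $d_1(\eta,\zeta) = e^{-m}$ with $m<n$, applying \Cref{homeoH} to get $d_o(\tau_\rho\eta,\tau_\rho\zeta)\ge e^{-3C_K}e^{-d(o,\tau_\rho(g_m(\zeta)))} \ge e^{-3C_K}e^{-d(o,\tau_\rho(g_n(\zeta)))}$ (the last step using that $d(o,\tau_\rho(g_m(\zeta)))\le d(o,\tau_\rho(g_n(\zeta)))$ up to $2C_K$ by \Cref{stability_geodesic} again, since $g_m(\zeta)$ lies near the geodesic to $\tau_\rho\zeta$ before $g_n(\zeta)$), and then absorbing all accumulated constants into $e^{7C_K}$.
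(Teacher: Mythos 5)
Your proposal is correct and follows essentially the same route as the paper: compare $S_n f_\rho(\zeta)$ with $-d(o,\tau_\rho(g_n(\zeta)))$ up to $O(C_K)$ via \Cref{birkhoff-busemann} and \Cref{stability_geodesic}, get the outer inclusion from the upper bound of \Cref{homeoH}, and get the inner inclusion from the lower bound of \Cref{homeoH} together with $d(o,\tau_\rho(g_m(\zeta)))\leq d(o,\tau_\rho(g_n(\zeta)))+C_K$ for $m<n$ — your contrapositive formulation (points splitting from $\zeta$ before level $n$ are pushed $e^{-O(C_K)}e^{S_nf_\rho(\zeta)}$ away) is exactly the paper's argument read backwards. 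One small remark: the digression about the branching/Schottky structure being essential for the inner inclusion is not needed, since the ball is taken inside $\Lambda_\rho$ (so one only has to exclude points of $\Lambda_\rho$ coming from outside the cylinder, which your final paragraph does), but this does not affect the validity of the proof you actually carry out.
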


\begin{proof}
Let $\zeta$ be a point in the cylinder $C(\sigma_1\cdots\sigma_n)$.
Denote by $[o, \tau_\rho \zeta)$ the geodesic
ray joining $o$ and $\tau_\rho \zeta$. Using \Cref{stability_geodesic}, the points $\rho (g_k(\zeta)) \cdot o$ (for $k\geq 1$) remain at distance at most $C_K$ of the geodesic ray $[o, \tau_\rho \zeta)$.
In particular, denoting by $p_n$ a point on $[o,\tau_\rho(\zeta))$ at distance $\leq C_K$ from $\rho(g_n(\zeta))\cdot o$, we have:
\begin{align*}
B(p_n,o,\tau_\rho\zeta) &= -d(p_n,o)\\
d(\rho(g_n(\zeta))\cdot o,o)-C_K&\leq d(p_n,o)\leq d(\rho(g_n(\zeta))\cdot o,o)+C_K\\
|B(g_n(\zeta)\cdot o,o,\tau_\rho \zeta) &- B(p_n,o,\tau_\rho \zeta)| \leq C_K.
\end{align*}
As $B(g_n(\zeta)\cdot o,o,\tau_\rho\zeta) = S_n f_\rho\zeta$ by \Cref{birkhoff-busemann}, this leads to: 
\begin{equation}\label{estimate_birkhoff}
    |S_n f_\rho\zeta + d(\rho(g_n(\zeta))\cdot o,o)| \leq 2C_K.
\end{equation}

Using \Cref{homeoH} and the previous Eq. \eqref{estimate_birkhoff}, one get that
$$\tau_\rho (C(\sigma _1,\ldots, \sigma _n)) \subset B_o^{\Lambda_\rho} (\tau_\rho\zeta, e^{3C_K}e^{-d(\rho(g_n(\zeta))\cdot o,o)})\subset B_o^{\Lambda_\rho} (\tau_\rho\zeta,e^{5C_K}e^{S_{n} f_\rho (\zeta)}).$$

Conversely, from the last point of \Cref{stability_geodesic}, we know that, for all $k\leq n$, $\rho(g_k(\zeta))\cdot o$ is at distance at most $C_K$ of the geodesic segment $[o,\rho(g_n(\zeta))\cdot o)]$ and so:
\begin{equation}\label{eq:lowerbounddistance}
\textrm{For }k\leq n,\, d(\rho(g_k(\zeta))\cdot o,o)\leq d(\rho(g_n(\zeta))\cdot o,o)+C_K.
\end{equation}
Suppose that $\xi\in\partial \F_r$ verifies $d_o(\tau_\rho\zeta,\tau_\rho\xi)\leq e^{-7C_K}e^{S_n(f_\rho)}$ and consider the integer $l:=-\ln(d_1(\xi,\zeta))$ for which $g_l(\xi)=g_l(\zeta)$. Using the lower bound in \Cref{homeoH}, we get:
$$e^{-3C_K}e^{-d(o,\rho(g_l(\zeta))\cdot o)}\leq e^{-7C_K}e^{S_n(f_\rho)}.$$
Using also \eqref{estimate_birkhoff} and taking the log, it leads to the bound:
$$d(o,\rho(g_l(\zeta))\cdot o)\geq d(o,\rho(g_n(\zeta))\cdot o)+2C_K.$$
From \eqref{eq:lowerbounddistance}, we deduce that $l\geq n$ and $\xi$ belongs to $C(\sigma_1\cdots\sigma_n)$. We have proven:
$$
\mathcal B_o^{\Lambda_\rho} (\tau_\rho\zeta, e^{-7C_K} e^{S_{n} f_\rho (\zeta)})\subset \tau_\rho (C(\sigma _1\cdots \sigma _n)).$$
\end{proof}

These estimates yields that images of cylinders approximate well enough balls in $\Lambda_\rho$. It will 
allow to compare their measure to measure of balls.

\subsubsection{Measure of balls and an estimate of Hausdorff dimension}

We now prove that $\Hdim \Lambda _\rho = \delta$ using Lemma \ref{hausdorffmeasure}.
To this end, we want to see the Gibbs measure $m_{f_\rho}$ as a measure on $\Lambda _\rho$ and we define
\begin{equation}\label{gibbslambda}
m := (\tau_\rho )_{*} m_{f_\rho}.
\end{equation}
From (\ref{gibbsmeasure}) in Theorem \ref{bowen} and \Cref{diam}, the images $\tau_\rho C(\sigma _1\cdots\sigma _n)$ of cylinders 
 satisfy (with the implied constants only depending on $K$):
\begin{equation*}
{\rm diam}_o \tau_\rho C(\sigma _1,...,\sigma _n) \asymp e^{S_n f_\rho (\zeta)}
\end{equation*}
and
\begin{equation}\label{measurefinal}
m (\tau_\rho C(\sigma_1....\sigma_n)) \asymp e^{\delta_\rho S_{n} f_\rho (\zeta)}.
\end{equation}
In order to conclude that $\Hdim \Lambda _\rho = \delta_\rho$ using Lemma \ref{hausdorffmeasure}, we need to prove estimates as above for balls in $\Lambda _\rho$ in place of $\tau_\rho C(\sigma_1,\ldots,\sigma_n)$. This is given by the following consequence of the previous \Cref{diam}. 
\begin{lemma}\label{ballestimate}
There exists a constant $C'>0$ such that,
for every $0<r<e^{-2C_K-K}$, for every $\zeta\in \partial \F_r$ we have
$$
C'^{-1} r^\delta \leq m (\mathcal B^{\Lambda_\rho}_o(\tau_\rho \zeta, r)) \leq C' r^\delta,
$$
where $m$ is the measure defined in (\ref{gibbslambda}).
\end{lemma}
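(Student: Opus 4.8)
The plan is to sandwich every ball $\mathcal B^{\Lambda_\rho}_o(\tau_\rho\zeta, r)$ between the images of two cylinders of comparable ``depth'', and then invoke the cylinder estimate \eqref{measurefinal} together with the Gibbs estimate \eqref{gibbsmeasure} to turn the measures of those cylinders into explicit powers of $r$. Fix $\zeta = (\sigma_1,\sigma_2,\ldots)\in\partial\F_r$ and $0<r<e^{-2C_K-K}$. The natural candidate for the cylinder to use is $C(\sigma_1\cdots\sigma_n)$ where $n=n(\zeta,r)$ is chosen so that $e^{S_nf_\rho(\zeta)}$ is comparable to $r$. More precisely, since $S_kf_\rho(\zeta)\approx -d(o,\rho(g_k(\zeta))o)$ by \eqref{estimate_birkhoff}, the sequence $k\mapsto S_kf_\rho(\zeta)$ is, up to an additive error controlled by $C_K$, monotone decreasing with steps bounded above and below in terms of $K$ (each step changes $d(o,\rho(g_k(\zeta))o)$ by at most $K+K=2K$ and, being along an essentially geodesic orbit, by at least $\tfrac1K-K$ eventually, or at least by a controlled amount using \eqref{eq:lowerbounddistance}); hence for $r$ small enough there is an $n$ with, say,
$$e^{-7C_K}e^{S_nf_\rho(\zeta)}\ \le\ r\ \le\ e^{7C_K}e^{S_{n'}f_\rho(\zeta)}$$
for a second index $n'$ with $|n-n'|$ bounded by a constant depending only on $K$.

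With such an $n$ in hand, \Cref{diam} gives directly the two-sided inclusion
$$\tau_\rho\big(C(\sigma_1\cdots\sigma_{n'})\big)\ \subset\ \mathcal B^{\Lambda_\rho}_o(\tau_\rho\zeta,r)\ \subset\ \tau_\rho\big(C(\sigma_1\cdots\sigma_n)\big)$$
after adjusting the indices by the bounded amount above so that the radius $e^{\pm7C_K}e^{S_\bullet f_\rho(\zeta)}$ of the ball produced by \Cref{diam} falls on the correct side of $r$. Applying $m$ and using \eqref{measurefinal}, we get
$$m\big(\mathcal B^{\Lambda_\rho}_o(\tau_\rho\zeta,r)\big)\ \asymp\ e^{\delta_\rho S_n f_\rho(\zeta)},$$
with implied constants depending only on $K$ (through $C_K$, the Gibbs constants, and the bounded index shift $|n-n'|$, which costs at most a factor $e^{\delta_\rho\,(\sup|f_\rho|)\,|n-n'|}$). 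Finally, by the defining choice of $n$ we have $e^{S_nf_\rho(\zeta)}\asymp r$, and raising to the power $\delta_\rho>0$ converts the right-hand side into $r^{\delta_\rho}$ up to a multiplicative constant. Setting $\delta=\delta_\rho$ and collecting all the constants into a single $C'>0$ finishes the proof.

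The main obstacle is the bookkeeping that makes the ``comparable depth'' step rigorous: one must check that the Birkhoff sums $S_kf_\rho(\zeta)$ really do decrease in controlled increments, so that an index $n$ realizing $e^{S_nf_\rho(\zeta)}\asymp r$ exists with the gap to the auxiliary index $n'$ bounded uniformly in $\zeta$ and $r$. This is exactly where \eqref{estimate_birkhoff} (comparing $S_kf_\rho$ to $-d(o,\rho(g_k(\zeta))o)$) and \eqref{eq:lowerbounddistance} (the points $\rho(g_k(\zeta))o$ for $k\le n$ stay monotonically inside the geodesic to $\rho(g_n(\zeta))o$ up to $C_K$) are used; together with the $K$-quasi-isometry bound $d(o,\rho(g_k(\zeta))o)\ge \tfrac kK-K$, they guarantee that depths grow without bound and with bounded steps. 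Once this is pinned down, everything else is a direct concatenation of \Cref{diam}, \eqref{measurefinal} and the elementary inequality $t\mapsto t^{\delta_\rho}$, and \Cref{hausdorffmeasure} then yields $\Hdim\Lambda_\rho=\delta_\rho$ as announced. The hypothesis $r<e^{-2C_K-K}$ is precisely what is needed to ensure $n\ge1$, i.e. that we are genuinely in the cylinder regime and not looking at balls so large they exceed $\diam_o\Lambda_\rho$.
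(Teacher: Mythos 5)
Your plan is essentially the paper's own proof: the paper chooses the largest $n$ with $r\leq e^{-10C_K+S_nf_\rho(\zeta)}$ and the first $N>n$ with $e^{10C_K+S_Nf_\rho(\zeta)}\leq r$, uses the bounded increments of $S_kf_\rho$ (from \eqref{estimate_birkhoff} and the $K$-quasi-isometry) to keep both within an additive constant of $\log r$, sandwiches the ball between $\tau_\rho(C(\sigma_1\cdots\sigma_N))$ and $\tau_\rho(C(\sigma_1\cdots\sigma_n))$ via \Cref{diam}, and concludes with \eqref{measurefinal}. The only slip is that your displayed double inequality is written in the reverse direction of what the inclusions require (one needs $e^{7C_K}e^{S_{n'}f_\rho(\zeta)}\leq r\leq e^{-7C_K}e^{S_nf_\rho(\zeta)}$), but your remark about adjusting the indices so the radii fall on the correct side of $r$ already fixes this.
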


\begin{proof}
From \Cref{estimate_birkhoff} and the fact that $\tau_\rho$ is a $K$-quasi isometry, we can derive that, for any $\zeta\in\partial \F_r$, we have, for all $n$: $$S_n(f_\rho)(\zeta)\leq - \frac nK +K\textrm{ and }$$
\begin{align*}
    |S_{n+1}(f_\rho)(\zeta) - S_n(f_\rho)(\zeta)|&\leq 4C_K + d(\rho(g_{n+1}(\zeta))\cdot o,\rho(g_n(\zeta))\cdot o))\\&\leq 4C_K+2K
\end{align*}
Fix $0<r<e^{-2C_K-K}$. Then, there exists a largest $n\geq 0$ such that 
$$r\leq e^{-10C_K+S_n(f_\rho)(\zeta)}.$$
We have, from the second point above: 
$$0\leq-10C_K+S_n(f_\rho)(\zeta)-\log(r) \leq 4C_K+2K.$$
Let $N> n$ be the first integer such that 
$e^{10C_K+S_N(f_\rho)(\zeta)}\leq r.$
Once again, we have $$\log(r)-10C_K-S_N(f_\rho)(\zeta) \leq 4C_K+2K$$
Now, from \Cref{diam}, if $\zeta=(\sigma_1,\ldots,\sigma_n\ldots,\sigma_N,\ldots)$, we have:
$$C(\sigma_1,\ldots,\sigma_N)\subset \mathcal B^{\Lambda_\rho}_o(\tau_\rho \zeta, r) \subset C(\sigma_1,\ldots,\sigma_n).$$
It follows that, denoting by $c$ the implicit constant in \eqref{measurefinal}, the measure of $B^{\Lambda_\rho}_o(\tau_\rho \zeta, r)$ verifies:
\begin{align*}
1/c e^{-\delta_\rho(14C_K +2K)} r^{\delta_\rho}  \leq 1/c e^{\delta_\rho S_N(f_\rho)(\zeta)}&\leq m(B^{\Lambda_\rho}_o(\tau_\rho \zeta, r))\\
m(B^{\Lambda_\rho}_o(\tau_\rho \zeta, r))
&\leq c e^{\delta_\rho S_n(f_\rho)(\zeta)}\leq c e^{\delta_\rho(14C_K +2K)} r^{\delta_\rho}.
\end{align*}
This proves the Lemma.
\end{proof}
From the previous \Cref{ballestimate}, the measure  $m= (\tau_\rho)_\star m_{f_\rho}$ on $\Lambda_\rho$ verifies exactly the hypothesis of \Cref{hausdorffmeasure}, for the parameter $\delta_\rho$ associated to the function $f_\rho$ by \Cref{gibbs}. To summarize, we have obtained, following Bowen's strategy, the Hausdorff dimension of the limit set:
\begin{proposition}\label{prop:delta_rho}
With the notations above, we have:
$$\Hdim \Lambda_\rho = \delta_\rho.$$
\end{proposition}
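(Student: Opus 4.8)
The plan is to apply \Cref{hausdorffmeasure} directly to the measure $m = (\tau_\rho)_* m_{f_\rho}$ defined in \eqref{gibbslambda}, with the exponent $\delta = \delta_\rho$ furnished by \Cref{gibbs}. At this point almost all of the work has been done: the substance of the argument lies in \Cref{diam} (quasi-conformality of $\tau_\rho$) together with \Cref{ballestimate} (passing from estimates on images of cylinders to estimates on genuine metric balls in $\Lambda_\rho$), so what remains is only a short verification that the hypotheses of \Cref{hausdorffmeasure} are met.

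First I would record that $\Lambda_\rho = \tau_\rho(\partial\F_r)$ is a compact metric space: $\partial\F_r$ is compact and, by \Cref{homeoH}, $\tau_\rho$ is a homeomorphism onto its image (indeed a $\tfrac1K$-Hölder one). Consequently $m = (\tau_\rho)_* m_{f_\rho}$ is a finite Borel measure on $\Lambda_\rho$, since $m_{f_\rho}$ is finite by \Cref{gibbs}. Next, since the limit set is by definition the image of $\tau_\rho$, every point $x\in\Lambda_\rho$ can be written $x = \tau_\rho\zeta$ for some $\zeta\in\partial\F_r$; thus \Cref{ballestimate} provides a constant $C'>0$ and a threshold $\epsilon := e^{-2C_K-K}$ such that
$$C'^{-1} r^{\delta_\rho} \leq m\big(\mathcal B^{\Lambda_\rho}_o(x,r)\big) \leq C' r^{\delta_\rho}$$
for every $x\in\Lambda_\rho$ and every $0<r<\epsilon$. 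This is precisely the hypothesis of \Cref{hausdorffmeasure} with $\delta = \delta_\rho$, $c = C'^{-1}$, $C = C'$, and that lemma then yields $\Hdim\Lambda_\rho = \delta_\rho$.

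I do not expect any genuine obstacle in this last step; the only points requiring a moment's attention in the assembly are the compactness of $\Lambda_\rho$ and the surjectivity of $\tau_\rho:\partial\F_r\to\Lambda_\rho$, both immediate from \Cref{homeoH} and the definition of the limit set, together with the remark that the lower bound in \Cref{ballestimate} automatically forces $m$ to have full support on $\Lambda_\rho$. All the analytic difficulty has already been absorbed into the construction of the Gibbs measure $m_{f_\rho}$ and the geometric estimates of \Cref{homeoH} and \Cref{diam}.
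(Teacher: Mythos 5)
Your proof is correct and is essentially identical to the paper's argument: the paper also obtains \Cref{prop:delta_rho} by observing that \Cref{ballestimate} shows the pushed-forward Gibbs measure $m=(\tau_\rho)_*m_{f_\rho}$ satisfies exactly the hypotheses of \Cref{hausdorffmeasure} with exponent $\delta_\rho$. The extra verifications you mention (compactness of $\Lambda_\rho$ via \Cref{homeoH}, finiteness of $m$, surjectivity of $\tau_\rho$) are the same routine points the paper leaves implicit.
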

The last point to prove \Cref{thm:bowen-CAT(-1)} is to understand how $f_\rho$ changes when $\rho$ varies along a sequence verifying the assumption of \Cref{thm:bowen-CAT(-1)}.

\subsection{Convergence of Hausdorff dimensions}

We now focus on the proof of \Cref{thm:bowen-CAT(-1)}. The two previous subsections focused on a single representation $\rho$. We now consider throughout this subsection a sequence $R_k : \F_r\to \Isom\, X$ (for $k\in\N$) and an additional representation $R_\infty : \F_r\to \Isom\, X$. To each of these representations is associated its orbit map $\tau_{R_k}:\partial \F_r \to \partial X$ (see \Cref{coding}) and the Hölder function $f_{R_k}:\partial \F_r\to \R$ (see \Cref{sec:frho}), that we will abbreviate to $\tau_k$  and $f_k$, for $k\in\N\cup\{\infty\}$. We assume that they verify:
\begin{itemize}
    \item There is a constant $K$ such that the orbit map $\tau_{\infty} : \F_r \to X$ is a $K$-quasi isometric embedding.
    \item $\tau_k\xrightarrow{c.s.}\tau_\infty$, i.e. for all $g\in \F_r$, we have:
    $$\tau_k(g) = R_k(g) \cdot o \xrightarrow{k \to \infty} \tau_\infty(g) = R_\infty(g) \cdot o.$$   
\end{itemize}

\subsubsection{Hausdorff convergence of limit sets}

Recall that,for $k\in \N\cup \{\infty\}$ and $\xi=(\sigma_1,\ldots) \in \partial \F_r$, we have defined (see Eq. \eqref{frho}):
$$f_k(\xi) = B(R_k(\sigma_1)o,o,\tau_{k}(\xi)).
$$
In view of the second part of \Cref{gibbs}, the following proposition is crucial:
\begin{proposition}\label{prop:convergenceLimitSets}
With the notations above, for any $\epsilon>0$ there is an integer $L>0$ such that for all $k\geq L$, all $\xi \in \partial \F_r$, we have
$$d_o(\tau_{k}(\xi),\tau_{\infty}(\xi))\leq \epsilon.
$$
In particular, the limit sets $\Lambda_{R_k}$ converge Hausdorff to $\Lambda_{R_\infty}$ and
the maps $f_k$ converge uniformly on $\F_r$ to $f_\infty$.
\end{proposition}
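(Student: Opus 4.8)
The plan is to exploit the exponential shrinking of shadows of distant balls (\Cref{prop:diam_shadows}): once we know the orbit maps are uniformly quasi-isometric, the comparison of $\tau_k(\xi)$ with $\tau_\infty(\xi)$, uniformly in $\xi\in\partial\F_r$, is controlled by the behaviour of $R_k$ on the \emph{finitely many} elements of $\F_r$ of a fixed length $n=n(\epsilon)$, on which we only need pointwise convergence. To make the quasi-isometry constant uniform, I would first note that $\tau_\infty$ is $K$-quasi-isometric and $\tau_k\simplelimit\tau_\infty$, so \Cref{thm:quasi_isometric_open} gives a $K'>K$ and a $k_0$ with $R_k$ being $K'$-Schottky for all $k\ge k_0$; since the conclusion only concerns large $k$, we may assume all $R_k$ and $R_\infty$ are $K'$-Schottky, so that \Cref{stability_geodesic} applies with the single constant $C_{K'}$, enlarged if needed so that $C_{K'}\ge C_K$.

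Next I would fix the integer $n$. Using the $K$-quasi-isometry of $\tau_\infty$, any reduced word $w$ of length $n$ satisfies $d(o,\tau_\infty(w))\ge n/K-K$, hence by \Cref{prop:diam_shadows} the shadow $\pi^{\partial X}_o(\mathcal B(\tau_\infty(w),C_{K'}+1))$ has $d_o$-diameter at most $c_{C_{K'}+1}\,e^{K}\,e^{-n/K}$. I take $n$ large enough that this quantity is $\le\epsilon$; it depends only on $\epsilon,K,K'$.

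The core step: there are only finitely many reduced words of length $n$ in $\F_r$, so the hypothesis $\tau_k(w)\to\tau_\infty(w)$ yields an $L\ge k_0$ with $d(\tau_k(w),\tau_\infty(w))\le 1$ for all such $w$ and all $k\ge L$. Fix $k\ge L$ and $\xi=(\sigma_1,\sigma_2,\dots)\in\partial\F_r$, and set $w=g_n(\xi)=\sigma_1\cdots\sigma_n$, a reduced word of length $n$. By \Cref{stability_geodesic} applied to $R_k$, the point $\tau_k(w)$ lies within $C_{K'}$ of the ray $[o,\tau_k(\xi))$, so $\tau_k(\xi)\in\pi^{\partial X}_o(\mathcal B(\tau_k(w),C_{K'}))\subset\pi^{\partial X}_o(\mathcal B(\tau_\infty(w),C_{K'}+1))$, the inclusion because $d(\tau_k(w),\tau_\infty(w))\le1$. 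Likewise \Cref{stability_geodesic} applied to $R_\infty$ gives $\tau_\infty(\xi)\in\pi^{\partial X}_o(\mathcal B(\tau_\infty(w),C_{K'}+1))$. Thus $\tau_k(\xi)$ and $\tau_\infty(\xi)$ lie in one common set of $d_o$-diameter $\le\epsilon$, which is exactly the asserted estimate, uniformly in $\xi$.

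For the "in particular" assertions I would argue formally. The uniform bound $\sup_\xi d_o(\tau_k(\xi),\tau_\infty(\xi))\le\epsilon$ for $k\ge L$ forces the Hausdorff distance between $\Lambda_{R_k}=\tau_k(\partial\F_r)$ and $\Lambda_{R_\infty}=\tau_\infty(\partial\F_r)$ to be $\le\epsilon$. For the functions $f_k$, writing $\xi=(\sigma_1,\dots)$ I would split
\begin{align*}
|f_k(\xi)-f_\infty(\xi)| \le{}& \bigl|B(R_k(\sigma_1)o,o,\tau_k(\xi))-B(R_\infty(\sigma_1)o,o,\tau_k(\xi))\bigr|\\
&+\bigl|B(R_\infty(\sigma_1)o,o,\tau_k(\xi))-B(R_\infty(\sigma_1)o,o,\tau_\infty(\xi))\bigr|,
\end{align*}
bound the first term by $d(R_k(\sigma_1)o,R_\infty(\sigma_1)o)\le\max_{s\in S}d(R_k(s)o,R_\infty(s)o)\to0$ using the Lipschitz dependence of $B$ in its first variable, and the second term by a constant times $d_o(\tau_k(\xi),\tau_\infty(\xi))$ using the Lipschitz dependence of $B$ in its last variable for $d_o$ (uniformly over the finitely many first entries $R_\infty(s)o$, $s\in S$, and the fixed second entry $o$), which is $O(\epsilon)$ for $k\ge L$; taking the supremum over $\xi$ gives uniform convergence $f_k\to f_\infty$. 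The main obstacle is organizational rather than technical: one must make precise that the supremum over the infinite boundary $\partial\F_r$ is governed by finitely much data — the images of length-$n$ words, with $n$ chosen only \emph{after} $\epsilon$ thanks to the exponential decay in \Cref{prop:diam_shadows} — and that the uniform quasi-isometry constant from \Cref{thm:quasi_isometric_open} is what makes this legitimate for all large $k$ at once.
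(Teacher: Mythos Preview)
Your proof is correct and follows essentially the same approach as the paper: invoke \Cref{thm:quasi_isometric_open} to get a uniform quasi-isometry constant, pick a depth $n=n(\epsilon)$ using the exponential decay of shadow diameters from \Cref{prop:diam_shadows}, use pointwise convergence on the finitely many length-$n$ words to place $\tau_k(g_n(\xi))$ near $\tau_\infty(g_n(\xi))$, and then apply \Cref{stability_geodesic} to trap both $\tau_k(\xi)$ and $\tau_\infty(\xi)$ in a single small shadow. The only differences are cosmetic (you use threshold $1$ and shadow radius $C_{K'}+1$ where the paper uses $C_K$ and $2C_K$), and your treatment of the ``in particular'' clause via the explicit two-term split of $|f_k-f_\infty|$ is the same Lipschitz argument the paper sketches.
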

This proposition holds because, in a $\CAT(-1)$-space, one can approximate the points $\tau_{_k}(\xi)$ that appear in the definition of $f_k$ using only a finite part of the orbit of $o$ under $R_k(\F_r)$.
Indeed, \Cref{approxlimitset2} describes in a quantitative way how the limit set $\Lambda _\Gamma \subset \partial X$ of any discrete group $\Gamma$ acting on a $\CAT(-1)$ space $X$ can be approximated by rays from the origin $o$ to points in finite subsets of its orbit $\Gamma \cdot o \subset X$. The proof of the proposition uses the same ideas.

\begin{proof}[Proof of  \Cref{prop:convergenceLimitSets}]
Fix $0<\epsilon<1$. 

By \Cref{thm:quasi_isometric_open}, there exists a constant such that all but finitely many $R_k$ are quasi-isometric representations. For simplicity and w.l.o.g., we assume that all $R_k$ are $K$-quasi-isometric representations.
In particular, there exists $n$ big enough such that for all $g\in\F_r$ of length $\geq n$, and all $k\in \N\cap \{\infty\}$, we have $d(o,R_k(g)\cdot o)$ big enough to verify: 
\begin{equation*}
    e^{-\frac{d(o,R_k(g)\cdot o)}{2}}\leq \epsilon
\end{equation*}

Using the fact that the orbit maps $\mathcal \tau_{k}$ converge pointwise, we can choose $k_0$ big enough so that for all $g\in\F_r$ of length $\leq n$, all $k\geq k_0$, we have $d(R_k(g)\cdot o, R_\infty(g)\cdot o)\leq C_K$.

Now fix $\xi=(\sigma_1,\ldots)\in \partial \F_r$. Consider $g_n:=g_{n}(\xi) = \sigma_1\cdots \sigma_n$.
For all $k\in\N\cup\infty$, from \Cref{stability_geodesic}, the geodesic ray $[o,\tau_k(\xi))$ passes at distance $\leq C_K$ from $R_k(g_n)\cdot o$.

By the triangular inequality, we obtain that $[o,\tau_k(\xi))$ passes at distance $\leq C_K+d(R_k(g_n)\cdot o,R_\infty(g_n)\cdot o)\leq 2C_K$ from $R_\infty(g_n)\cdot o$. With the terminology of shadows (see \Cref{sec:visualmetric}), both $\tau_k(\xi)$ and $\tau_\infty(\xi)$ belong to the shadow:
$$\pi_o^{\partial X}(\mathcal B(\rho_\infty(g_n)\cdot o,2C_K)).$$
From \Cref{prop:diam_shadows}, with $r = 2C_K$, we obtain that there is a constant $c_{2C_K}$ such that:
$$d_o(\tau_{k}(\xi),\tau_{\infty}(\xi))\leq c_{2C_K}\epsilon.
$$
This proves the first point. The claim about Hausdorff convergence of limit sets follows immediately.

Moreover, for any $\xi =(\sigma_1,\ldots) \in\partial \F_r)$, we have:
$$|f_k(\xi)-f_\infty(\xi)| = |B(R_k(\sigma_1)o,o,\tau_k(\xi))-B(R_\infty(\sigma_1)o,o,\tau_\infty(\xi))|.$$
Now the Busemann function behaves nicely w.r.t each variables (see \Cref{coding}); $\sigma_1$ can only take a finite numbers of values, so that $R_k(\sigma_1)o$ converges uniformly (on $\partial \F_r$) to $R_\infty(\sigma_1)o$; and, by the first estimate of this proposition, $\tau_k(\xi)$ converges uniformly to $\tau_\infty(\xi)$. This proves the uniform convergence of $f_k$ to $f_\infty$.
\end{proof}
We have now done all the work toward the proof of \Cref{thm:bowen-CAT(-1)} and its corollary. 

\subsubsection{Proof of \Cref{thm:bowen-CAT(-1)} and its corollary}

We now pull the strings together to finish the proofs of \Cref{thm:bowen-CAT(-1)} and its corollary. Let us begin with the main theorem:
\begin{proof}[Proof of \Cref{thm:bowen-CAT(-1)}]
Take a sequence of representations $R_k$ (for $k\in\N$) and an additional representation $R_\infty : \F_r\to \Isom\, X$ that verify:
\begin{itemize}
    \item There is a constant $K$ such that the orbit map $\tau_\infty : \F_r \to X$ is a $K$-quasi isometric embedding, for $k\in \N\cup \{\infty\}$.
    \item The sequence of orbit maps $(\tau_k)_{k\in \N}$ converges point-wise to $\tau_\infty$.   
\end{itemize}
Then, by \Cref{prop:convergenceLimitSets}, the limit sets $\Lambda_k$ converge for the Hausdorff topology to $\Lambda_\infty$. This proves the first point of \Cref{thm:bowen-CAT(-1)}.

Second, we know that:
\begin{itemize}
    \item The Hausdorff dimension of each $\Lambda_k$ verify $\Hdim \Lambda_k=\delta_k$, where $\delta_k$ is the coefficient associated to $f_k$, see \Cref{prop:delta_rho}.
    \item The functions $f_k$ converge uniformly to $f_\infty$ on $\partial \F_r$, see the last point of \Cref{prop:convergenceLimitSets}.
\end{itemize}
The last point of \Cref{gibbs} gives the convergence $\delta_k\to\delta_\infty$, which gives the conclusion of the Theorem:
$$\Hdim \Lambda_k \to \Hdim \Lambda_\infty.$$
\end{proof}

In order to get \Cref{coro:BowenAlgConv}, we need to check that the two assumptions (recalled in the previous proof) of \Cref{thm:bowen-CAT(-1)} follow from convergence in $\RepX(\F_r)$ toward a limiting representation in $\SchottkyspaceX(\F_r)$.
\begin{proof}[Proof of \Cref{coro:BowenAlgConv}]
Take a sequence of representations $R_k$ (for $k\in\N$) that converges in $\RepX(\F_r)$ to a Schottky representation $R_\infty : \F_r\to \Isom\, X$. First, $R_\infty$ is assumed Schottky, so by definition its orbit map is a quasi-isometric embedding.

Then, convergence in $\SchottkyspaceX(\F_r)$ amounts to point-wise convergence $R_k(\gamma)\to\rho_\infty(\gamma)$. This last convergence, in $\Isom\, X$, is itself equivalent to the point-wise convergence: for all $x\in X$, $R_k(\gamma)x\to R_\infty(\gamma)x$ in $X$.
Taking $x=o$, we get the point-wise convergence of the orbit maps $\tau_{k}$ to $\tau_{\infty}$: So the second assumption of the \Cref{thm:bowen-CAT(-1)} is also fulfilled.

By \Cref{thm:bowen-CAT(-1)}, we indeed get $\Hdim \Lambda_k \to \Hdim \Lambda_\infty.$
\end{proof}

This ends our extension of Bowen theorem. We now turn to the other step of our strategy: turning diverging sequences of representations to convergent ones. We begin by a review of the needed material.

\section{Kernels, embeddings and asymptotic cones}\label{sec:constructionRk}

\subsection{Kernels and embeddings in hyperbolic spaces}\label{sec:EmbeddingKernel}

We review in this section the work \cite{Monod-Py-2019} of Monod and Py on the self-representations of the infinite dimensional Möbius group, based on the notion of \emph{kernel of hyperbolic type} introduced by Gromov.

\subsubsection{Kernels and embeddings}

The more classical notion of kernel of \emph{positive type} inspired the following definition:
\begin{Definition}[\cite{Monod-Py-2019} Definition 3.1]
Given a set $X$, a kernel of hyperbolic type is a function $\beta : X\times X \rightarrow \mathbb R$ which is symmetric, non negative, taking the value $1$ on the diagonal and satisfying
\begin{equation*}
\sum_{i,j =1}^n  c_i c_j \beta (x_i, x_j) \leq \left(\sum_{i,j =1} ^n c_k \beta (x_0, x_k)\right)^2
\end{equation*}
for all $n \in \N$, all $x_0, x_1, ..... , x_n \in X$ and all $c_1, ..., c_n \in \mathbb R$.
\end{Definition}
A remarkable feature of any such kernel is that it is associated to an embedding of $X$ in a hyperbolic space $\H^\alpha$ (see \Cref{H-infini}). We only state the case where $X$ is a separable topological space, for which $\alpha$ is at most countable:
\begin{theorem}[{\cite[Theorem 3.4]{Monod-Py-2019}}]\label{thm:MonodPy}
Let $X$ be a separable topological space with a continuous kernel $\beta$ of hyperbolic type. Then there exists an unique cardinal $\alpha$, at most countable, and a continuous embedding $f_\beta:X\to \H^\alpha$, unique up to $\Isom\,\H^\alpha$, such that $f_\beta$ has total hyperbolic image and for all $x,y \in X$:
\begin{equation*}
\beta (x,y) = \cosh {d(f_\beta(x), f_\beta(y))}
\end{equation*}
\end{theorem}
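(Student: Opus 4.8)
The statement to be proved is Theorem 3.4 of Monod--Py (the cited \Cref{thm:MonodPy}): a separable topological space $X$ with a continuous hyperbolic-type kernel $\beta$ embeds continuously in some $\H^\alpha$ with $\alpha$ at most countable, realizing $\beta = \cosh d(f_\beta(\cdot),f_\beta(\cdot))$, and this embedding is unique up to $\Isom\,\H^\alpha$. The plan is to reconstruct the Minkowski model directly from $\beta$ and then invoke a GNS-type construction.

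\textbf{Step 1: From $\beta$ to a Lorentzian form.} The first thing I would do is fix a base point $x_0\in X$ and use the defining inequality to build a bilinear form on the free vector space $V$ spanned by symbols $\{e_x : x\in X\}$. Set $Q(e_x,e_y) := -\beta(x,y)$ off-diagonal-ish; more precisely I want a symmetric bilinear form $B$ on $V$ for which the quadratic inequality in the definition says exactly that $B$ has Lorentzian signature $(1,\text{?})$ when restricted appropriately. The cleanest route: define $\langle e_x, e_y\rangle := \beta(x_0,x)\beta(x_0,y) - \beta(x,y)$, or work with differences $e_x - e_{x_0}$. The hyperbolic-type inequality, rearranged, should say precisely that the form is negative semi-definite on the subspace orthogonal (in the $\beta(x_0,\cdot)$ functional) to $e_{x_0}$ — i.e. that after splitting off the one "timelike" direction carried by $x_0$, the remainder is negative semi-definite. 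This is the heart of the translation and I'd verify it by taking the $c_i$ in the definition to range over finitely supported coefficients.

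\textbf{Step 2: GNS / quotient and completion.} Having a symmetric bilinear form of signature $(1,\kappa)$ on $V$ (with possibly infinite $\kappa$, and a null radical), I quotient by the radical, complete with respect to the Hilbert norm on the negative-definite part, and obtain $\R \oplus H$ with the Minkowski form, where $H$ is a real Hilbert space. Separability of $X$ and continuity of $\beta$ force $H$ to be separable (or finite-dimensional), giving the cardinal bound "at most countable"; I'd establish separability by noting that $\{e_x : x \text{ in a countable dense set}\}$ spans a dense subspace, using continuity of $\beta$ to pass to all of $X$. The image of $e_x$ lands on the upper sheet $s^2 - (u,u) = 1$, $s>0$, precisely because $\beta(x,x)=1$ and $\beta\ge 0$ pins down the sign; this defines $f_\beta(x)$, and the Minkowski inner product of $f_\beta(x), f_\beta(y)$ equals $\beta(x,y) = \cosh d(f_\beta(x),f_\beta(y))$ by the standard formula $\cosh d(p,q) = \langle p,q\rangle_{\text{Mink}}$ on $\H^\alpha$. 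Continuity of $f_\beta$ follows from continuity of $\beta$ together with the fact that on $\H^\alpha$ the metric topology and the weak topology inherited from the ambient space agree on bounded sets — or more simply, $d(f_\beta(x),f_\beta(y)) = \operatorname{arccosh}\beta(x,y) \to 0$ as $y\to x$.

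\textbf{Step 3: Totality and uniqueness.} That the image is total (spans a dense hyperbolic subspace, i.e. is not contained in a proper totally geodesic subspace) is built into the construction: we completed exactly the span of the $e_x$'s, so no room is wasted. For uniqueness up to $\Isom\,\H^\alpha$: given two such embeddings $f, f'$, the map $f(x)\mapsto f'(x)$ preserves all Minkowski inner products (both equal $\beta(x,y)$), hence extends to a linear isometry of the spans, hence — by totality and because $O(1,\alpha)$ acts as the full isometry group of $\H^\alpha$ — to an element of $\Isom\,\H^\alpha$ carrying $f$ to $f'$. The cardinal $\alpha$ is an isometry invariant (Hilbert space dimension of the negative part), so it is unique too.

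\textbf{Main obstacle.} The delicate point is Step 1: correctly extracting the Lorentzian signature from the funny-looking quadratic inequality — in particular checking that the radical is handled properly and that there is exactly \emph{one} positive direction, not zero and not two. One must be careful that the "$(\sum c_k\beta(x_0,x_k))^2$" on the right is genuinely the square of a linear functional (the timelike coordinate) and that subtracting it off leaves something $\le 0$; degenerate cases (e.g. $\beta\equiv 1$, or $X$ a single point) should be checked to make sure the construction still outputs a legitimate, possibly zero-dimensional, $\H^\alpha$. The separability/cardinality bookkeeping in Step 2 is routine once Step 1 is clean.
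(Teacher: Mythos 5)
This theorem is not proved in the paper: it is quoted verbatim from Monod--Py (their Theorem 3.4), so there is no internal proof to compare against. Your sketch follows what is essentially the original construction, and it is correct in outline: the defining inequality, rearranged, says exactly that the kernel $(x,y)\mapsto \beta(x_0,x)\beta(x_0,y)-\beta(x,y)$ is of positive type, so the classical GNS construction yields a real Hilbert space $H$ and a map $x\mapsto u_x$ realizing it, and $f_\beta(x):=(\beta(x_0,x),u_x)\in\R\oplus H$ then has Minkowski product $\beta(x,y)$ and lies on the hyperboloid; of your two candidate forms in Step 1, this second one is the clean choice. Two small points you left loose: the fact that $f_\beta(x)$ lands on the \emph{upper} sheet is not pinned down by ``$\beta(x,x)=1$ and $\beta\ge 0$'' alone but by the $n=1$ case of the inequality, which gives $1=\beta(x,x)\le \beta(x_0,x)^2$ and hence $\beta(x_0,x)\ge 1>0$; and for separability you need the (joint) continuity of $\beta$ to see that $x\mapsto u_x$ is continuous, so that a countable dense subset of $X$ gives a total sequence in $H$. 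With those details filled in, the totality and uniqueness arguments are the standard ones and your plan goes through.
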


\begin{remark}\label{total-n}
The embedding given in the above Theorem \ref{thm:MonodPy} has the following "dimension property": given distinct points $x_0, x_1, ...,x_n$ in $X$, the subspace of $\H^\alpha$ generated by the points $f_\beta (x_0), ... ,f_\beta (x_n)$ has dimension $n$, cf. \cite[Theorem 13.1.1 (iv)]{DSU} in the case $X$ is a tree, the general case being analogous.
\end{remark}
As the hyperbolic space
$\H ^\alpha$ and the map $f_\beta$ are uniquely determined up to isometry, there is a representation
of $\Aut (X, \beta)$ in $\Isom\,\H ^\alpha$ and the map $f_\beta : X \rightarrow \H^\alpha$ is equivariant with respect to that representation. 

A fundamental property of hyperbolic kernels is the following theorem:
\begin{theorem}[\cite{Monod-Py-2019} Theorem 3.10]\label{power-t}
Let $\beta$ be a kernel of hyperbolic type on a set $X$. Then so is $\beta ^t$ for all $0\leq t \leq 1$.
\end{theorem}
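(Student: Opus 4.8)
The plan is to recognise the hyperbolic–type inequality, once a basepoint is fixed, as a condition of \emph{positive type}, and then to reduce the statement to the elementary fact that, for $0\le t\le 1$, the Taylor series of $1-(1-u)^t$ at the origin has non‑negative coefficients and converges on $[-1,1]$. The cases $t=0$ (where $\beta^0\equiv 1$, which one checks directly against the defining inequality) and $t=1$ are trivial, so fix $t\in(0,1)$ from now on.

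First I would extract two elementary consequences of the defining inequality. Taking $n=1$, the point $x_1=y$, and $c_1=1$ gives $1=\beta(y,y)\le\beta(x_0,y)^2$, hence $\beta\ge 1$ everywhere. Next, for a fixed $w\in X$, rewriting the inequality with $x_0=w$ shows that the kernel
$$\phi^\beta_w(y,z):=\beta(w,y)\beta(w,z)-\beta(y,z)$$
is of positive type: indeed $\sum_{i,j}c_ic_j\,\phi^\beta_w(y_i,y_j)=\bigl(\sum_k c_k\beta(w,y_k)\bigr)^2-\sum_{i,j}c_ic_j\,\beta(y_i,y_j)\ge 0$. Conversely, a symmetric, non‑negative kernel equal to $1$ on the diagonal all of whose $\phi_w$ are of positive type satisfies the defining inequality (taking $x_0=w$, for each $w$ separately). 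So it is enough to prove that $\phi^{\beta^t}_w$ is of positive type for every $w$, noting that $\beta^t$ is obviously symmetric, $\ge 1$, and equal to $1$ on the diagonal.

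Fix $w$ and set $a:=\beta(w,\cdot)\colon X\to[1,\infty)$. Then
$$\psi(y,z):=\frac{\phi^\beta_w(y,z)}{a(y)a(z)}=1-\frac{\beta(y,z)}{a(y)a(z)}$$
is of positive type, being the entrywise product of $\phi^\beta_w$ with the rank‑one positive‑type kernel $(y,z)\mapsto a(y)^{-1}a(z)^{-1}$; moreover $\psi(y,y)=1-a(y)^{-2}\in[0,1)$, so by Cauchy--Schwarz for positive‑type kernels the values of $\psi$ on any finite subset of $X$ lie in some interval $[-\rho,\rho]$ with $\rho<1$ (depending on the subset). Since $\beta(y,z)=a(y)a(z)\bigl(1-\psi(y,z)\bigr)$ we get $\beta^t(y,z)=a(y)^t a(z)^t\bigl(1-\psi(y,z)\bigr)^t$, hence
$$\phi^{\beta^t}_w(y,z)=a(y)^t a(z)^t-\beta^t(y,z)=a(y)^t a(z)^t\Bigl(1-\bigl(1-\psi(y,z)\bigr)^t\Bigr).$$
On $[-1,1]$ one has the absolutely convergent expansion $1-(1-u)^t=\sum_{k\ge1}c_k u^k$ with $c_k=(-1)^{k+1}\binom{t}{k}\ge 0$ (the sign of $\binom{t}{k}$ is $(-1)^{k-1}$ for $0<t<1$, and $\sum_{k\ge1}c_k=1$). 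Therefore, on any finite subset of $X$, the kernel $(y,z)\mapsto 1-(1-\psi(y,z))^t$ equals the norm‑convergent series $\sum_{k\ge1}c_k\,\psi(\cdot,\cdot)^{k}$ of non‑negative multiples of entrywise powers of the positive‑type kernel $\psi$; by the Schur product theorem (entrywise products of positive‑type kernels are of positive type) each $\psi(\cdot,\cdot)^k$ is of positive type, so the series is, and multiplying by the rank‑one positive‑type kernel $(y,z)\mapsto a(y)^t a(z)^t$ shows that $\phi^{\beta^t}_w$ is of positive type. Since $w$ was arbitrary, the reformulation of the first step gives that $\beta^t$ is of hyperbolic type.

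I do not expect a real obstacle: the one idea is that fixing a basepoint turns the hyperbolic–type inequality into a positive‑type statement, after which the claim is an instance of the Schur product theorem applied to the power series of $1-(1-u)^t$; the remaining ingredients — $\beta\ge 1$, the automatic bound $|\psi|<1$ on finite sets, and the convergence of the binomial series — are routine. A pleasant feature of this argument is that it is purely combinatorial and needs no topology on $X$, in contrast with \Cref{thm:MonodPy}; should one also want the embeddings $f_{\beta^t}\colon X\to\H^\alpha$ when $X$ is a separable topological space, one feeds $\beta^t$ back into \Cref{thm:MonodPy}.
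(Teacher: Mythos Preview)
Your proof is correct. Note, however, that the paper itself does not supply a proof of this theorem: it is quoted from \cite[Theorem~3.10]{Monod-Py-2019} and used as a black box, so there is no ``paper's own proof'' to compare against here. For what it is worth, your reduction---fixing a basepoint $w$ to rewrite the hyperbolic-type inequality as positive definiteness of the kernel $\phi^\beta_w(y,z)=\beta(w,y)\beta(w,z)-\beta(y,z)$, then applying the Schur product theorem to the binomial expansion $1-(1-u)^t=\sum_{k\ge1}c_k u^k$ with $c_k\ge0$---is along the lines of the argument in the cited reference.
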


\subsubsection{Examples for hyperbolic spaces and trees}\label{sec:kernels-examples}

As a first example of kernel of hyperbolic type, one may consider $X=\H^N$ and $\beta (x,y) := \cosh {d(x,y)}$. We therefore 
obtain by Theorems \ref{thm:MonodPy} and \ref{power-t}  a one parameter family of continuous embeddings $f_t : \H^N \rightarrow \H^\infty$, for all $0< t < 1$ associated to the kernels 
$\beta ^t (x,y) := \left(\cosh {d(x,y)}\right)^t$,
where $\H ^\infty$ is the (unique up to isometry) separable infinite dimensional hyperbolic space. For $0< t < 1$, we denote by $\nu_t:\Isom\, \H^N\to \Isom\, \H^\infty$ the representation for which the map $f_t$ is equivariant. Note that the maps $f_t$ are continuous. Moreover, they are very close to being simple dilatations and are anyway $1$-Lipschitz, as stated in the following:

\begin{proposition}\label{QI1}
Let $f_t : \H^N \rightarrow \H^\infty$, $0< t < 1$ be the maps associated to the kernels 
$\beta ^t (x,y) := \left(\cosh {d(x,y)}\right)^t$. Then, $f_t$ is a $(t,\ln 2)$-quasi-isometry and we have for all $x,y \in \H^N$ and all $0< t \leq 1$:
\begin{equation*}
t \, d(x,y) \leq \, d(f_t(x), f_t (y)) \leq \min [ t\, d(x,y) +\ln 2 \, \,, d(x,y)]
\end{equation*}
\end{proposition}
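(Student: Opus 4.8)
The plan is to establish the two-sided inequality $t\,d(x,y)\le d(f_t(x),f_t(y))\le\min[t\,d(x,y)+\ln 2,\ d(x,y)]$ by directly manipulating the defining identity $\cosh d(f_t(x),f_t(y))=(\cosh d(x,y))^t$, and then read off the $(t,\ln 2)$-quasi-isometry statement as an immediate corollary. Write $s=d(x,y)\ge 0$ and $u=d(f_t(x),f_t(y))\ge 0$, so the relation is $\cosh u=(\cosh s)^t$; the whole proposition reduces to a one-variable inequality comparing $\operatorname{arccosh}((\cosh s)^t)$ with $ts$ and with $\min(ts+\ln 2,s)$.

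First I would prove the lower bound $u\ge ts$, equivalently $\cosh u=(\cosh s)^t\ge\cosh(ts)$. Since $t\le 1$, this is the statement that $\varphi(t):=t\log\cosh s-\log\cosh(ts)\ge 0$ for $t\in[0,1]$; note $\varphi(0)=\varphi(1)=0$, and I would check concavity of $\varphi$ in $t$ (differentiate twice; the second derivative of $\log\cosh(ts)$ in $t$ is $s^2/\cosh^2(ts)\ge 0$, while the first term is linear), so $\varphi\ge 0$ on $[0,1]$ by concavity. Equivalently one can use the elementary fact that $s\mapsto\log\cosh s$ is convex with value $0$ at $0$, hence superadditive under scaling by $t\in[0,1]$. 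Either route is short.

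Next the upper bounds. For $u\le s$ (the $1$-Lipschitz claim): since $\cosh$ is increasing on $[0,\infty)$ it suffices that $(\cosh s)^t\le\cosh s$, which holds because $\cosh s\ge 1$ and $t\le 1$. For $u\le ts+\ln 2$, equivalently $(\cosh s)^t\le\cosh(ts+\ln 2)$: estimate $\cosh(ts+\ln2)\ge\tfrac12 e^{ts+\ln2}=e^{ts}$, so it is enough to show $(\cosh s)^t\le e^{ts}$, i.e. $\cosh s\le e^{s}$, which is clear since $\cosh s=\tfrac12(e^s+e^{-s})\le e^s$. This handles the $t\in(0,1)$ case uniformly; the endpoint $t=1$ is the trivial isometry.

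Finally, the $(t,\ln 2)$-quasi-isometry statement: the displayed inequalities give $t\,d(x,y)-\ln 2\le t\,d(x,y)\le d(f_t(x),f_t(y))\le t\,d(x,y)+\ln 2$, which is exactly the definition of a $(t,\ln 2)$-quasi-isometric embedding (in the $(K,C)$-convention with $K=1/t$, $C=\ln2$, after dividing through). I do not expect a genuine obstacle here — the only mild subtlety is organizing the two upper bounds so the $\min$ is handled cleanly and making sure the estimates degrade gracefully as $t\to 1$ and as $s\to 0$ (both sides $\to 0$), but these are checked by inspection. The argument is entirely elementary calculus on $\cosh$, with the lower bound (convexity/concavity of $\log\cosh$) being the only place requiring a small idea.
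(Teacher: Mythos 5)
Your proof is correct and takes essentially the same route as the paper: all the inequalities are read off from the defining identity $\cosh d(f_t(x),f_t(y))=(\cosh d(x,y))^t$ using the elementary bounds $\tfrac12 e^u\le\cosh u\le e^u$ and monotonicity of $\cosh$. The only (cosmetic) difference is in the lower bound, where you invoke convexity of $\log\cosh$ vanishing at $0$ (equivalently, concavity of $t\mapsto t\log\cosh s-\log\cosh(ts)$) instead of the paper's convexity of $u\mapsto u^{1/t}$ — the same one-line convexity fact in a different guise.
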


\begin{proof}
We observe for all $x,y \in \H ^N$ and all $0< t \leq 1$,
\begin{equation*}
\frac{e^{d(f_t(x),f_t (y))}}{2} \leq \cosh {d(f_t(x), f_t(y))} =  \left(\cosh {d(x,y)}\right)^t \leq e^{t\,d(x,y)},
\end{equation*}
hence 
$$
d(f_t(x),f_t (y) \leq t \,d(x,y) + \ln 2.
$$
On the other hand, since $0<t\leq 1$, $$ \cosh {d(f_t(x), f_t(y))} =  \left(\cosh {d(x,y)}\right)^t \leq \cosh {d(x,y)}$$
hence we also have 
$$
d(f_t(x), f_t(y)) \leq d(x,y).
$$
Similarly, setting $s := \frac{1}{t} \geq 1$, we have by convexity of $u \rightarrow u^s$
\begin{equation*}
\cosh {d(x,y)} = (\cosh {d(f_t(x), f_t(y))^{s}} \leq \frac{1}{2} \left(e^{sd(f_t(x), f_t(y))} + e^{-sd(f_t(x), f_t(y))}\right)
\end{equation*}
hence 
$$
\cosh {d(x,y)} \leq \cosh \left({s \,d(f_t(x), f_t(y)}\right)
$$
therefore
$$
t\, d(x,y) ) \leq d(f_t(x),f_t (y)).
$$
\end{proof}

\medskip

Another important family of examples of kernel of hyperbolic type arises for $(X,d)$ a metric tree. We consider here only the case of a separable metric tree.
The kernel is defined as $\beta _s (x,y) := e^{s d(x,y)}$ with $s\geq 0$, \cite[Proposition 1.5]{Monod}.
As before we have a map $F_s : X \rightarrow \H ^\infty$, equivariant for a representation $\mu_s:\Isom\, T \to \Isom\, \H^\infty$ and with total hyperbolic image in $\H ^\infty$, such that
\begin{equation}\label{distance3}
e^{s d(x,y)} = \cosh {d(F_s(x), F_s (y))}
\end{equation}
for all $x,y \in X$, where we have denoted by $d$ both distances on $X$ and on $\H ^\infty$.

As in the previous case, we have 
\begin{proposition}\label{QI2}
Let $F_s : X \rightarrow \H ^\infty$ be the maps associated to the kernels $\beta _s (x,y) := e^{s d(x,y)}$ on the metric tree $(X,d)$.
Then $F_s$ is a $(s, \ln 2)$-quasi-isometry. More precisely, for all $x,y \in X$, all $s > 0$, 
\begin{equation}\label{Fs1}
s \, d(x,y) \leq d(F_s (x), F_s (y)) \leq s \, d(x,y) + \ln 2.
\end{equation}
Moreover, if $d(x,y) \leq [0, \frac{\ln 2}{s}]$, we have
\begin{equation}\label{Fs2}
d(F_s (x), F_s (y)) \leq \left(2s \, d(x,y)\right)^{1/2} .
\end{equation}
\end{proposition}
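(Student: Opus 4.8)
The plan is to derive everything from the defining relation $\cosh d(F_s(x),F_s(y)) = e^{s\,d(x,y)}$, exactly as in the proof of \Cref{QI1}, since the tree kernel $\beta_s(x,y)=e^{s\,d(x,y)}$ plays the role there played by $\beta^t$. Write $D := d(x,y)$ and $\delta := d(F_s(x),F_s(y))$, so that $\cosh\delta = e^{sD}$. For the upper bound in \eqref{Fs1}, I would use $\tfrac12 e^{\delta}\leq \cosh\delta = e^{sD}$, which gives $\delta \leq sD + \ln 2$ directly. For the lower bound, observe $\cosh\delta = e^{sD} \geq \tfrac12(e^{sD}+e^{-sD}) = \cosh(sD)$ (indeed $e^{sD}\geq \cosh(sD)$ for $sD\geq 0$), and since $\cosh$ is increasing on $[0,\infty)$ this yields $\delta \geq sD$, i.e. $s\,d(x,y)\leq d(F_s(x),F_s(y))$. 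Combining the two bounds shows $F_s$ is a $(s,\ln 2)$-quasi-isometry (with the convention that a $(K,C)$-quasi-isometry here means the two-sided estimate $s d(x,y)\le d(F_s(x),F_s(y))\le s d(x,y)+\ln 2$; strictly this is even better than a general $(K,C)$-embedding since there is no loss on the lower side).

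For the refined small-scale estimate \eqref{Fs2}, the point is that near the diagonal the additive $\ln 2$ defect is spurious and the map is Hölder of exponent $1/2$. I would argue as follows: from $\cosh\delta = e^{sD}$ and the elementary inequality $\cosh\delta \geq 1 + \delta^2/2$ we get
\begin{equation*}
1 + \frac{\delta^2}{2} \leq e^{sD},
\end{equation*}
hence $\delta^2 \leq 2(e^{sD}-1)$. When $0\leq sD \leq \ln 2$, i.e. $d(x,y)\in[0,\tfrac{\ln 2}{s}]$, one has $e^{sD}-1 \leq sD$ — here I would use the convexity bound $e^{u}-1\leq u\cdot\frac{e^{a}-1}{a}$ on $[0,a]$ with $a=\ln 2$, noting $\frac{e^{\ln 2}-1}{\ln 2}=\frac{1}{\ln 2}<1.45$, so in fact $e^{sD}-1\le sD$ fails to be literally true but $e^{sD}-1\le \frac{sD}{\ln 2}$ does hold, and one may need the slightly weaker constant; alternatively on $[0,\ln 2]$ one checks directly $e^u - 1 \le u$ is false near $\ln 2$, so the clean statement uses $e^u-1\le \frac{u}{\ln 2}$ — giving $\delta^2 \leq \frac{2sD}{\ln 2}$, which is a bound of the form \eqref{Fs2} up to the precise constant. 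I expect a minor bookkeeping adjustment of constants here to match the stated $(2s\,d(x,y))^{1/2}$ exactly; the cleanest route is probably to simply write $e^{sD}-1\le sD\cdot e^{sD}\le sD\cdot e^{\ln 2}=2sD$ when $sD\le\ln 2$, yielding $\delta^2\le 4sD$ — or to sharpen by noting $e^{sD}-1 \le sD$ whenever... in any case the exponent $1/2$ is robust.

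The main obstacle I anticipate is purely the optimization of constants in \eqref{Fs2}: the inequality $\cosh\delta\ge 1+\delta^2/2$ is lossless, so the whole game is bounding $e^{u}-1$ above by a multiple of $u$ on $[0,\ln 2]$ with the right multiple to land on exactly $2$. The honest approach is to pick whichever elementary estimate ($e^u-1\le u e^u$, or $e^u \le 1+u+u^2$ near $0$, or a direct monotonicity argument on $\frac{e^u-1}{u}$) produces the constant $2$; since $\frac{e^u-1}{u}$ is increasing and equals $\frac{1}{\ln 2}\approx 1.443$ at $u=\ln 2$, we have $e^u-1\le \frac{u}{\ln 2} < 1.45\,u \le 2u$ on $[0,\ln 2]$, hence $\delta^2\le 2(e^{sD}-1)\le \frac{2sD}{\ln 2}\le 4sD$; to get exactly $2sD$ one would instead restrict to $d(x,y)\le \tfrac{\ln 2}{s}$ and use that on the sub-interval where $\frac{e^u-1}{u}\le 1$, i.e. $u\le u_0$ for a slightly smaller $u_0$ — but as this is a plan I would defer the exact constant and simply verify the monotone-quotient inequality gives a clean $(C s\,d(x,y))^{1/2}$ bound, which is all that is used downstream. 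Everything else is a direct transcription of the \Cref{QI1} argument with $\cosh d \mapsto e^{sd}$ and $t\mapsto s$, $s\ge 0$ replacing $0<t\le 1$, and noting that the tree structure of $X$ is not actually needed for these two inequalities — only the kernel identity \eqref{distance3}, which holds by \Cref{thm:MonodPy} applied to $\beta_s$.
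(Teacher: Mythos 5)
Your proof of \eqref{Fs1} is correct and is exactly the argument the paper intends (the paper simply declares this part ``analogous to \Cref{QI1}'' and leaves it out): the upper bound follows from $\tfrac12 e^{\delta}\leq\cosh\delta=e^{s\,d(x,y)}$, and the lower bound from $e^{s\,d(x,y)}\geq\cosh(s\,d(x,y))$ together with the monotonicity of $\cosh$, where $\delta:=d(F_s(x),F_s(y))$.

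For \eqref{Fs2}, your hesitation about the constant is not mere bookkeeping: it is a genuine issue, and your analysis of it is more careful than the paper's. The paper's own proof runs $s\,d(x,y)=\ln(\cosh\delta)\geq\cosh\delta-1\geq\delta^{2}/2$, but $\ln u\geq u-1$ is false for every $u>1$, and in fact \eqref{Fs2} with the constant $2$ fails for small positive $d(x,y)$: writing $u=s\,d(x,y)$, one has $e^{u}=1+u+\tfrac{u^{2}}{2}+\dots>\cosh(\sqrt{2u})=1+u+\tfrac{u^{2}}{6}+\dots$, hence $\delta=\operatorname{arccosh}(e^{u})>\sqrt{2u}$. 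Your chain $\delta^{2}\leq 2(\cosh\delta-1)=2(e^{u}-1)\leq\tfrac{2}{\ln 2}\,u$ on $[0,\ln 2]$ (using that $(e^{u}-1)/u$ is increasing) is the correct repair, and it yields $d(F_s(x),F_s(y))\leq\left(\tfrac{2}{\ln 2}\,s\,d(x,y)\right)^{1/2}\leq 2\left(s\,d(x,y)\right)^{1/2}$. As you observe, only a H\"older-$\tfrac12$ bound near the diagonal with \emph{some} constant is used later --- for the closedness of $F_s(T)$ in the proof of \Cref{thm:partial-hausdorff-convergence} and for point 2 in the proof of \Cref{hausdimasymp-ultrafiltre} --- so the weaker constant is harmless. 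Do not chase the constant $2$: state and prove the estimate with $2/\ln 2$ (or on a smaller interval), since the version with $2$ is not attainable; everything downstream goes through unchanged.
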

\begin{proof}
The proof of (\ref{Fs1}) is analogous to the one of Proposition \ref{QI1}, we leave it to the reader. By the relation (\ref{distance3}), we see 
that $\cosh{d(F_s(x), F_s (y))} \in [1,2]$ if and only if $d(x,y) \in [0, \frac{\ln 2}{s}]$. When these conditions are satisfied, we have
\begin{align*}
s d(x,y) = \ln \left(\cosh{d(F_s(x), F_s(y))}\right) &\geq \cosh{d(F_s(x), F_s(y))} -1 \\ &\geq \frac{d(F_s(x), F_s(y)) ^2 }{2}
\end{align*}
which implies (\ref{Fs2}).
\end{proof}

The two previously defined maps $f_t : \H ^N \to \H^\infty$ and $F_s : X \to \H ^\infty$ extend to equivariant embeddings of $\partial f_t:\partial \H^N\to \partial \H ^\infty$ and $\partial F_s: \partial T\to \partial \H ^\infty$, as all spaces are $\CAT(-1)$. Notice that these embeddings are bilipschitz homeomorphisms on their images, \cite{Monod-Py-2019}.

\subsubsection{Close kernels and related embeddings}

In the sequel, we will need to compare the maps induced by different kernels of hyperbolic type on different metric spaces and show some continuous dependence of the maps $f$
with respect to their associated kernels.  The notion of close maps taking values in a metric space is straightforward:
\begin{definition}
    Let $K$ be a set, $(X,d)$ a metric space and $\epsilon>0$. Two maps $f,g:K\to X$ are $\epsilon$-close if $\forall k \in K$ we have $d(f(k),g(k))\leq \epsilon$. 
\end{definition}
The definition of close kernels requires a bit more work. Let us consider two metrics spaces $(X_1, d_1)$ and  $(X_2,d_2)$ and assume that $X_1$ is locally compact. Let $\beta _i$ be kernels of hyperbolic type on $X_i$, for $i=1,2$. Fix a compact subset $K\subset X_1$ and an embedding $\phi : K \hookrightarrow X_2$. Our notion of close kernels is that they take approximately the same values on points in $K$ as for points in $\phi(K)$:
\begin{definition}
For any $\eta>0$, we say that the kernels $(\beta_1,\beta_2)$ are $(K,\phi,\eta)$-close if for any $x,y\in K$, we have: 
\begin{equation}\label{hypothesisapproxkernel}
    | \beta _2 (\phi (x), \phi (y)) - \beta _1 (x,y) | \leq \eta.
\end{equation}
\end{definition}
We claim that close hyperbolic kernels imply close embeddings in $\H^\infty$, up to an isometry of $\H^\infty$. Let 
$f_i : X_i \rightarrow \H ^{\alpha _i}$ be the embeddings associated to the kernels $\beta_i$, for $i=1,2$. This proposition is not surprising but will prove crucial later on.
\begin{proposition}\label{approxkernel}
Let $(X_1,d_1)$  be a separable metric space and fix $\epsilon>0$ and $K\subset X_1$ a finite subset.
Then there exists $\eta:=\eta(\epsilon,K)>0$ such that for any metric space $(X_2,d_2)$, any embedding $\phi : K \hookrightarrow X_2$, any pair of kernels $(\beta_1,\beta_2)$ that are $(K,\phi,\eta)$-close and associated maps $f_i : X_i \rightarrow \H ^{\infty}$, there exists an isometry
\[\mathcal F: \H^{\infty} \to \H^{\infty}\]
such that the maps $\mathcal F\circ f_1$ and $f_2\circ \phi$ from $K$ to $\H^{\infty}$ are $\epsilon$-close.
\end{proposition}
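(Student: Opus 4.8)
The plan is to reduce everything to a finite-dimensional statement, since $K$ is finite: if $K = \{x_0, x_1, \ldots, x_n\}$, then by \Cref{total-n} the images $f_1(x_0), \ldots, f_1(x_n)$ span a subspace of $\H^\infty$ of dimension at most $n$, and likewise for $f_2(\phi(x_0)), \ldots, f_2(\phi(x_n))$. So, up to post-composing $f_1$ (resp. $f_2$) by an isometry of $\H^\infty$, we may assume both finite configurations live in a fixed copy of $\H^n \subset \H^\infty$. The proposition then becomes: the position (up to isometry of $\H^n$) of an $(n+1)$-point configuration in $\H^n$ depends continuously on the Gram-type data $\big(\cosh d(p_i,p_j)\big)_{i,j}$, which here equals $\big(\beta_\bullet(\cdot,\cdot)\big)$. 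This is a compactness-plus-uniqueness argument.

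First I would set it up precisely. Write $a_{ij}^{(1)} := \beta_1(x_i,x_j) = \cosh d(f_1(x_i), f_1(x_j))$ and $a_{ij}^{(2)} := \beta_2(\phi(x_i),\phi(x_j)) = \cosh d(f_2(\phi(x_i)), f_2(\phi(x_j)))$; the hypothesis says $|a_{ij}^{(1)} - a_{ij}^{(2)}| \leq \eta$ for all $i,j$. Via the hyperboloid model $\H^n \subset \R^{1,n}$, a point $p$ corresponds to a vector with $\langle p,p\rangle = -1$ (Minkowski), and $\cosh d(p,q) = -\langle p,q\rangle$. So the configuration $(f_1(x_i))$ determines, and is determined up to $\mathrm{O}^+(1,n)$ by, the matrix of Minkowski inner products $M^{(1)} := (-a_{ij}^{(1)})$, which is a symmetric matrix of signature type $(1, \leq n)$ with $-1$ on the diagonal; similarly $M^{(2)} := (-a_{ij}^{(2)})$. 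The claim to prove is: the map sending such a Gram matrix to the $\mathrm{O}^+(1,n)$-orbit of the configuration it determines is (uniformly) continuous, so that $\|M^{(1)} - M^{(2)}\|$ small forces the two configurations to be close after applying a suitable isometry $\mathcal F$ of $\H^n$ (extended arbitrarily to $\H^\infty$).

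For the continuity, I would argue by contradiction and compactness. Suppose not: there is $\epsilon > 0$ and sequences of configurations $P^{(1)}_m, P^{(2)}_m$ in $\H^n$ whose Gram matrices differ by at most $1/m$ but which cannot be moved within $\epsilon$ of each other by any isometry. Normalize each $P^{(1)}_m$ by an isometry so that $f_1(x_0)$ sits at the basepoint; the diagonal-$1$ and signature constraints, together with the fixed value $\cosh d(x_0, x_i) = a_{0i}$ being bounded (it equals $\beta_1(x_0,x_i)$, which, after also bounding $\beta_2$, lies in a compact interval once $\eta \le 1$ say), confine all the points to a fixed compact ball in $\H^n$. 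Passing to a subsequence, $P^{(1)}_m \to P^{(1)}_\infty$ and $P^{(2)}_m \to P^{(2)}_\infty$; their Gram matrices are equal, so by the uniqueness clause of \Cref{thm:MonodPy} (in finite dimension: two finite configurations with the same pairwise distances are related by an isometry) there is $\mathcal F_\infty \in \Isom\,\H^n$ with $\mathcal F_\infty(P^{(1)}_\infty) = P^{(2)}_\infty$. Then for $m$ large, $\mathcal F_\infty(P^{(1)}_m)$ is within $\epsilon$ of $P^{(2)}_m$, a contradiction. This yields the required $\eta = \eta(\epsilon, K)$; note it depends only on $\epsilon$ and on $K$ through $n = |K|-1$ and an a priori bound on the $\beta_1$-values on $K$, which is what the statement allows (the separability of $X_1$ is used only to guarantee $\alpha \le \aleph_0$ so that everything embeds in $\H^\infty$).

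The main obstacle I anticipate is handling the $\mathrm{O}^+(1,n)$-orbit normalization cleanly — i.e. checking that the configurations really do stay in a fixed compact set after the normalizing isometries, uniformly in $m$, so that the subsequence extraction is legitimate. This requires observing that once one point is pinned at the basepoint, the remaining points lie in balls of radius $\mathrm{arccosh}$ of a uniform bound on the kernel values, and then using that the stabilizer of the basepoint is the compact group $\mathrm{O}(n)$ to pin down the rest. A secondary subtlety is that the ``uniqueness up to isometry'' in \Cref{thm:MonodPy} is stated for the infinite-dimensional construction, so I would either invoke it directly with $X$ the finite metric space $K$ (which is separable), or give the elementary finite-dimensional uniqueness argument; either is routine.
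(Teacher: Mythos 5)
Your proposal is correct, but it follows a genuinely different route from the paper. The paper proves an explicit perturbation lemma (\Cref{approxhyperbolic}): working in the hyperboloid model $\R\oplus H$, it normalizes $u_0=v_0=(1,0)$, runs Gram--Schmidt on the spans of the two configurations, observes that the Gram--Schmidt coefficients are universal rational functions of the inner products (which are $\eta$-close by hypothesis), and builds the isometry $\mathcal F$ by hand from the linear isometry of $H$ matching the two orthonormal bases, with a quantitative modulus $\eta_3(\eta)$; the modulus $\eta(\epsilon,u_0,\ldots,u_n)$ thus depends on the configuration $f_1(K)$, i.e.\ on $\beta_1|_K$. You instead reduce to a fixed copy of $\H^n$ via the dimension property (\Cref{total-n}) and transitivity of $\Isom\,\H^\infty$ on finite-dimensional totally geodesic subspaces, and then argue by compactness and contradiction, using rigidity of finite configurations in $\H^n$ (equal Gram matrices imply congruence, which you may indeed get either elementarily or from the uniqueness clause of \Cref{thm:MonodPy} applied to the finite space $K$). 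This works; the normalization worry you raise is already resolved by your own observation that pinning $x_0$ at the basepoint together with the bound on the $\cosh$-entries confines both configurations to a compact ball, so no further $\mathrm{O}(n)$-normalization is needed before extracting subsequences. What each approach buys: the paper's argument is effective (an explicit isometry and an explicit modulus), while yours is soft but yields a slightly cleaner uniformity, $\eta=\eta(\epsilon,n,B)$ with $B$ an a priori bound on the kernel values on $K$. Note that in both proofs $\eta$ in fact depends on $\beta_1|_K$ (through the configuration $f_1(K)$ in the paper, through the bound $B$ in yours), a dependence not visible in the statement's $\eta(\epsilon,K)$; this is harmless in the intended application, where $X_1=T$ and $\beta_1=\beta_s$ are fixed.
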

The proof of this proposition relies on the following Lemma, which is a variation of classical results.
Recall that the hyperbolic space $\H ^\infty$ can be seen as the 
set of positive lines in $\mathbb R \oplus H$ for the bilinear 
form $B \left( (s,x),(s', x') \right) := s s' -\langle x, x' \rangle$ where 
$(H , \langle ., . \rangle)$ is a (separable) Hilbert space. Given a set of points $\{u_0, \ldots, u_n\}$ in $\H ^\infty$, we define 
$H(u_0, \ldots, u_n)$ to be the smallest hyperbolic subspace $\H ^m \subset \H ^\infty$ containing $\{u_0, \ldots,u_n\}$.
\begin{lemma}\label{approxhyperbolic}
Let $\{u_0, \ldots u_n\}\subset \H ^\infty$ verify $\dim(H(u_0, \ldots,u_n)=n)$ and fix $\epsilon >0$.
Then there exists $\eta:= \eta(\epsilon, u_0,\ldots, u_n) >0$ such that for every subset $\{v_0, \ldots, v_n\}$ in $\H^\infty$ with $ \dim H (v_0, \ldots, v_n) =n$ and, for all $0\leq i,j\leq n$,
\begin{equation}\label{approxhyperbolicassumption}
| \cosh d(u_i, u_j) - \cosh d (v_i, v_j) | \leq \eta ,
\end{equation}
there exists an isometry $\mathcal F : \H^\infty \rightarrow \H^\infty$ sending $H(u_0, ...,u_n)$ onto $H(v_0,...,v_n)$ such that $\mathcal F (u_0) = v_0$, 
and for all $i = 1, ..., n$, 
$$
d(\mathcal F (u_i), v_i) \leq \epsilon.
$$
\end{lemma}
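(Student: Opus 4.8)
The plan is to reduce the statement to a finite-dimensional linear-algebra continuity statement in the Minkowski model $\R\oplus H$, and then argue by compactness. First I would set up coordinates: work inside the finite-dimensional Minkowski subspace $V:=\R\oplus H(u_0,\ldots,u_n)$ (of signature $(1,n)$) by viewing the points $u_i$ as unit timelike vectors with $B(u_i,u_i)=1$, $B(u_i,u_j)=\cosh d(u_i,u_j)$. The hypothesis \eqref{approxhyperbolicassumption} then says exactly that the Gram matrix $G_v:=(B(v_i,v_j))_{i,j}$ is $\eta$-close, entrywise, to $G_u:=(B(u_i,u_j))_{i,j}$. Since $\dim H(u_0,\ldots,u_n)=n$, the vectors $u_0,\ldots,u_n$ span $V$ and $G_u$ is the Gram matrix of a basis of a Lorentzian space, hence nondegenerate with the correct inertia; the same holds for $G_v$ on $V':=\R\oplus H(v_0,\ldots,v_n)$ by hypothesis.

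The key step is the following: two ordered bases of two Lorentzian spaces of the same signature $(1,n)$ with the \emph{same} Gram matrix are related by a unique linear isometry, and this isometry can be written by an explicit (rational, hence continuous) formula in the entries of the Gram matrix and the coordinates of the vectors — e.g. via a Gram--Schmidt-type orthonormalization carried out in the Lorentzian sense, which is well-defined because $G_u$ is nondegenerate on all the relevant flags (a condition that is open in $G$, so it persists for $G_v$ when $\eta$ is small). Concretely, I would produce the linear map $A:V\to V'$ sending each $u_i$ to $v_i$; because $G_v\neq G_u$ in general, $A$ is not an isometry, but $A$ depends continuously (indeed smoothly) on the entries of $G_v$ and on fixed coordinates for the $v_i$, and $A\to \mathrm{id}$-type limit holds as $G_v\to G_u$. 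Then I would Lorentz-polar-decompose or simply orthonormalize: let $\mathcal F_0:V\to V'$ be the genuine linear isometry obtained by applying Lorentzian Gram--Schmidt to $(u_i)$ and to $(v_i)$ and matching the resulting orthonormal bases. By continuity of Gram--Schmidt on the open set where the leading principal minors of the Gram matrix are nonzero, $\mathcal F_0(u_i)$ is within $\epsilon$ of $v_i$ (in the hyperbolic metric of $\H^\infty$, which near a fixed point is comparable to the ambient linear distance) once $\eta=\eta(\epsilon,u_0,\ldots,u_n)$ is small enough; and $\mathcal F_0(u_0)=v_0$ exactly if we start the Gram--Schmidt process at $u_0$ and $v_0$. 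Finally I would extend $\mathcal F_0$ to an isometry $\mathcal F$ of all of $\H^\infty$ by letting it act as $\mathcal F_0$ on $V$ and by any fixed Hilbert-space isometry on an orthogonal complement of $H(u_0,\ldots,u_n)$ inside $H$ onto an orthogonal complement of $H(v_0,\ldots,v_n)$ (both complements are separable Hilbert spaces of the same dimension, at most countable). This $\mathcal F$ sends $H(u_0,\ldots,u_n)$ onto $H(v_0,\ldots,v_n)$, fixes the requirement $\mathcal F(u_0)=v_0$, and satisfies $d(\mathcal F(u_i),v_i)\le\epsilon$.

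The main obstacle I expect is making the continuity quantitative and uniform in the right variables. The subtlety is that $\eta$ must depend only on $\epsilon$ and on $u_0,\ldots,u_n$, not on the $v_i$; this is fine because, once $\eta$ is small, the $v_i$ are confined to a compact neighborhood of the $u_i$ (their mutual Minkowski products are $\eta$-close to fixed values, pinning them down up to isometry), so one really is invoking uniform continuity of an explicit map on a compact set. A second, more bookkeeping-type point is to verify that smallness of $\eta$ forces $\dim H(v_0,\ldots,v_n)=n$ to be genuinely available and that the Lorentzian Gram--Schmidt never divides by zero — both follow from the nonvanishing of the leading principal minors of $G_u$ (a finite list of nonzero numbers depending only on the $u_i$) together with continuity. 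Everything else is the standard fact that congruent Gram matrices of the same signature give isometric inner-product spaces, upgraded to a continuous/quantitative form; this is the ``variation of classical results'' the lemma refers to.
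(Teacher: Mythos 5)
Your proposal is correct and follows essentially the same route as the paper: both reduce the hypothesis to entrywise closeness of Gram matrices, exploit that Gram--Schmidt coefficients are rational (hence continuous, thanks to the nondegeneracy guaranteed by the dimension hypothesis) functions of those entries, match the resulting orthonormal frames to get the isometry, and extend it to all of $\H^\infty$ by an arbitrary isometry of orthogonal complements. The only cosmetic difference is that the paper first normalizes $u_0=v_0=(1,0)$ and runs Euclidean Gram--Schmidt on the Hilbert-space components $x_i,y_i$, whereas you run a Lorentzian Gram--Schmidt directly on the Minkowski span starting at $u_0,v_0$; both yield $\mathcal F(u_0)=v_0$ and the same quantitative conclusion.
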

The proof relies on classical constructions in Hilbert spaces.
\begin{proof}
Let us write $u_i : =(s_i, x_i)$ and $v_i : =(t_i, y_i)$ in  $\H^\infty$, $i =0,...,n$, with:
$$B (u_i, u_i) = s_i ^2 - \langle x_i, x_i \rangle = 1 \quad \textrm{ and } \quad 
B (v_i, v_i) = t_i ^2 - \langle y_i, y_i \rangle = 1.$$
We can assume $u_0 = v_0 = (1,0) \in \H^\infty$.
Recall that for $w,w' \in \H^\infty$, the distance between $w$ and $w'$ satisfies:
\begin{equation}\label{cosh=B}
\cosh d(w,w') = B (w,w').
\end{equation}
We fix $\epsilon >0$ and assume 
(\ref{approxhyperbolicassumption}) with $\eta= \eta (\epsilon, u_0,...,u_n)$ to be determined later. From (\ref{approxhyperbolicassumption}) applied to 
$u_0, u_i$ and $v_0, v_i$, we get for $i=1,...,n$  
\begin{equation}\label{t-s}
    |t_i -s_i | \leq \eta
\end{equation}
and 
\begin{equation}\label{scalarproduct-approx}
    |\langle y_i, y_j\rangle  - 
     \langle x_i, x_j\rangle  |\leq \eta_1 (\eta)
\end{equation}
with $\lim _{\eta \to 0} \eta _1 (\eta) =0.$

We consider $V:= {\rm Vect} (y_1,...,y_n)$ and $\overline V := {\rm Vect}(x_1,...,x_n)$ the two $n$-dimensional subspaces of $H$ generated by  
$\{y_1,...,y_n\}$ and
$\{x_1,...,x_n \}$. Let 
$\{e_1,...,e_n\}$ and $\{\overline{e}_1,...,\overline{e}_n \}$ the Gram-Schmidt orthonormal basis of 
$\{y_1,...,y_n\}$ and
$\{x_1,...,x_n \}$.

According to the Gram-Schmidt algorithm, there exist universal rational functions $C_{i,k} \in \mathbb Q(X_1,...,X_{n^2})$ such that 
$y_i = \sum_{k=1}^n c_{i,k} e_k$ and 
$x_i = \sum_{k=1}^n \overline{c}_{i,k} \overline{e}_k$, with 
$c_{i,k}:= C_{i,k}((\langle y_l, y_{l'} \rangle)_{l,l'})$ and
$\overline{c}_{i,k}:= C_{i,k} ((\langle x_l, x_{l'} \rangle)_{l,l'}) $. 
Therefore, we get from (\ref{scalarproduct-approx}) that 
\begin{equation}\label{coeff-gramschmidt}
\left(\sum_{i,k} |c_{i,k} - \overline{c}_{i,k}|^2\right)^{1/2} \leq \eta_2 (\eta),
\end{equation}
with $\lim_{\eta \to 0} \eta _2(\eta) =0$.

Let us now consider the isometry 
$\varphi : \overline{V} \rightarrow V$ defined by $\varphi (\overline{e}_i) = e_i$, $i=1,...,n$. We can extend $\varphi$ to an isometry of the Hilbert space $H$, still denoted by $\varphi$. By (\ref{coeff-gramschmidt}), $\varphi$ satisfies 
\begin{equation}\label{norm-beta}
\|\varphi (x_i) - y_i\| \leq \eta _2 (\eta).
\end{equation}
The map $\mathcal F : \H^\infty \rightarrow \H^\infty$ defined by $\mathcal F (t,y) := (t, \varphi (y))$ is an isometry  such that 
$\mathcal F (u_i) = \mathcal F ((s_i,x_i)) = (s_i, \varphi (x_i))$. From (\ref{cosh=B}), we have
$\cosh d(\mathcal F (u_i), v_i)=\cosh d((s_i,\varphi (x_i)), (t_i,y_i))=B (\mathcal F (u_i), v_i)$, therefore, by (\ref{t-s}) and (\ref{norm-beta}),
$$
|\cosh d(\mathcal F (u_i), v_i) -1| =|t_i s_i - \langle \varphi (x_i), y_i\rangle -1| \leq \eta _3 (\eta),
$$
where $\lim_{\eta \to 0} \eta _3 (\eta) =0$. To conclude the proof of \Cref{approxhyperbolic}, we can choose $\eta = \eta(\epsilon, u_0,\ldots,u_n)$ such that $\eta_3(\eta)\leq \epsilon$.
\end{proof}
\begin{proof}[Proof of Proposition \ref{approxkernel}]
Let $K:= \{ x_0, x_1, ...,x_n\} $ and fix $\epsilon >0$. To prove Proposition \ref{approxkernel} we want to apply Lemma \ref{approxhyperbolic}
to $u_l := f_1(x_l)$ and $v_l := f_2 \circ \phi (x_l)$. In particular, we take $\eta:=\eta(\epsilon,K)>0$ to be the $\eta(\epsilon,u_0,\ldots,u_n)$ given by Lemma \ref{approxhyperbolic}. We first notice that, by  \Cref{total-n},
$$\dim H(f_1(x_0), \cdots ,f_1 (x_n))=
\dim H( f_2 \circ \phi (x_0), \cdots,f_2 \circ \phi (x_n)) =n.$$

Let us check the assumption \eqref{approxhyperbolicassumption}. 
Consider the maps $f_1 ; X_1 \to \H ^{\infty}$ and $f_2 : X_2 \to \H ^{\infty}$ associated to the kernels
$\beta _1$ and $\beta _2$. Recall that the maps $f_1, f_2$ satisfy for every $x,y \in X_i$, ($i=1,2$):
$$
\beta _i ( x,y) = \cosh d(f_i (x), f_i (y)).
$$
Therefore, by hypothesis (\ref{hypothesisapproxkernel}), we have for every $0\leq l,l'\leq n$:
\begin{align*}
|\beta_2(v_{l'}, v_l) &- \beta_1(u_{l'}, u_l)| =
\\
&|\cosh d(f_2 \circ \phi (x_{l'}), f_2 \circ  \phi (x_l)) - \cosh d (f_1 (x_{l'}), f_1 (x_{l}))| \leq \eta.
\end{align*}
Lemma \ref{approxhyperbolic} implies the existence of an isometry 
$\mathcal F : \H ^{\infty} \rightarrow \H ^{\infty}$ sending $H(u_0, ...,u_n)$ onto $H(v_0,...,v_n)$ and such that $\mathcal F (u_0) = v_0$, 
and for all $i = 1, ..., n$, 
$$
d(\mathcal F (u_i), v_i) \leq \epsilon,
$$
which is the isometry needed to prove Proposition \ref{approxkernel}.
\end{proof}
This concludes our review of kernels of hyperbolic type and associated embeddings.

\subsection{Asymptotic cones of diverging representations}\label{sec:asymptotic-cones}

We now consider a diverging sequence of discrete representations $\rho _i : \Gamma \rightarrow \Isom\,\H^N$ of a finitely generated group $\Gamma$. Informally,
'diverging' implies that the sequence of representations converges to an action of $\Gamma$ on a metric tree $(X,d)$, cf. \cite{Paulin-ENS}, \cite{Bestvina}. Our aim in this section is to show that this
convergence naturally extends to kernels of hyperbolic type and their associated maps into $\H ^\alpha$.

    \subsubsection{Asymptotic cones}
    
    Let us introduce the set up and the notations we need, following \cite{Paulin-ENS}. We fix a non principal ultrafilter $\omega$ on the set of integers $\N$
and for any bounded sequence $(\lambda _{k})_{k \in \N}$ of real numbers, denote $\omegalim \lambda _k$ the limit of this sequence along the ultrafilter $\omega$.
Let us consider $(X_k, d_k, o_k)$ a sequence of pointed metric spaces and denote
$$
X_\infty := \{ (x_k)_{k\in \N} \,|\, x_k \in X_k, \,\, \omegalim d_k(x_k, o_k) < \infty \}.
$$
This space inherits a pseudo-metric $d_\infty$ defined by $$d_\infty((x_k),(y_k)) = \omegalim d_k(x_k,y_k).$$ This function is symmetric, verifies the triangular inequality, but may take the value $0$ for two different sequences. A set with a pseudo-metric always gives rise to a metric space by "quotienting by the metric". Indeed, one defines the ultralimit 
$$(X_\omega , d_\omega, o_\omega) := \omegalim (X_k, d_k, o_k)$$ of the sequence $(X_k, d_k, o_k)$ as the quotient of the pointed space 
$(X_\infty, d_\infty, (o_k))$ by the equivalence relation $(x_k) \sim (y_k)$ iff $d_\infty((x_k), (y_k)) =0$.
Note that the distance $d_\omega$ between two classes $[(x_k)]$ and $[(y_k)]$ is well-defined as the distance $d_\infty$ between any representative sequences $(x_k)$ and $(y_k)$.

Given a sequence of  isometric actions of a group $\Gamma$ on $(X_k, d_k)$ such that for all $\gamma \in \Gamma$,
\begin{equation}\label{group-action}
\omegalim d_k (\gamma o_k, o_k) <\infty ,
\end{equation}
one gets an isometric action of $\Gamma$ on $(X_\omega, d_\omega)$ by setting
$\gamma \cdot [(x_k)] := [(\gamma (x_k))]$. In that case, we will say that the sequence of actions of $\Gamma$ on $(X_k, d_k)$ converges to an action of $\Gamma$ 
on $(X_\omega, d_\omega)$.

This general construction may be applied to a variety of cases. For our concerns, the main example is the case of asymptotic cones, where $X_k$ is assumed constant and the metrics $d_k$ are just a rescaling. In fact, we will restrict here to the case where $X_k=\H^N$ ($N$ a positive integer). 
\begin{definition}
    Let $N$ be a positive integer and $(\lambda _k)$ be a sequence of positive real numbers converging to $0$. The associated asymptotic cone of $\H^N$ is defined as:
\begin{equation*}
\Cone_\omega (\H ^N, \lambda _k \, d, o_k) := \omegalim (\H ^N, \lambda _k \, d, o_k).
\end{equation*}
\end{definition} 

It is known that for every sequence $\lambda _k$ converging to $0$, the asymptotic cone $$\Cone_\omega (\H ^N, \lambda_{k} d, o_k)$$ is a metric tree \cite{Paulin-ENS} and is not separable if $N>1$.

\subsubsection{Limits of diverging representations}\label{diverging}
    
    A celebrated application of the ultralimit construction is the study of diverging sequences of representations. Throughout this section, we fix a non-principal ultrafilter $\omega$ on $\N$. We work with a fixed finite dimensional hyperbolic spaces $(\H^N,d)$ and a finitely generated group $\Gamma$ with symmetric generating set $S$.
\begin{definition}
Let $\rho _k : \Gamma \rightarrow \Isom\,\H^N$ be a sequence of representations. The \emph{minimal joint displacement} of $\rho_k$ is:
\begin{equation}\label{joint-displacement}
r_k := \inf _{x\in \H^N} \max _{s \in S} d(x, \rho _k (s) x).
\end{equation}
The sequence is \emph{diverging} if the minimal joint displacement tends to $+\infty$.
\end{definition}
Let us outline how such a sequence of representations converges to an action on a real tree, for details we refer for example to \cite{Bestvina}, \cite{Paulin-Top-Ap}.
The idea is to consider the asymptotic cone for the sequence of rescalings  $\lambda_k = r_k ^{-1}$. We can choose a sequence of points $o_k \in \H^N$ almost realizing the minimal joint displacement: 
\begin{equation*}
 \max _{1\leq j \leq \nu} d(o_k , \rho _k (s_j) o_k) \leq r_k +1.
\end{equation*}
By our choice of the rescaling factors $r_k^{-1}$, the isometric actions of $\Gamma$
on $(\H ^N, r_{k}^{-1} d, o_k)$ satisfy the condition (\ref{group-action}). Therefore $\Gamma$ acts isometrically on the asymptotic cone
$\Cone_\omega (\H ^N, r_{k}^{-1} d, o_k)$.
To sum up, the sequence of actions of $\rho _k (\Gamma)$ on $(\H ^N, r_{k}^{-1} d, o_k)$  converges to an action of $\rho _\infty (\Gamma)$ on the metric tree $\Cone_\omega (\H ^N, r_{k}^{-1} d, o_k)$.

Moreover there exists a unique separable minimal closed and convex subtree $$(T, d_\omega, o_\omega) \subset \Cone_\omega (\H ^N, r_{k}^{-1} d, o_k)$$
which is invariant under the action of $\rho _\infty (\Gamma)$. This minimal subtree is the union of the axes in $\Cone_\omega (\H ^N, r_{k}^{-1} d, o_k)$
of the hyperbolic elements of $\rho _\infty (\Gamma)$, see \cite[Proposition 2.4]{Paulin-Top-Ap} or \cite{DK}. 

We will also need to understand the relation between the convergence in the sense of ultrafilter as above with the equivariant Gromov-Hausdorff convergence
introduced by F. Paulin, \cite{Paulin-ENS}.
Let us consider a sequence of metric spaces $(X_k, d_k)$ and a metric space $(X,d)$ with an isometric action of a group $\Gamma$. 
\begin{definition}[\cite{Paulin-ENS}]\label{paulin}
The sequence $(X_k, d_k)$  converges to $(X,d)$ in the \emph{equivariant Gromov-Hausdorff topology} if the following property is satisfied:
for every $\epsilon >0$, every finite subset $K\subset X$, every finite subset $P\subset \Gamma$, there exists a subset 
$\Omega \subset \N$ belonging to the ultrafilter $\omega$ such that for every $k\in \Omega$, there exists a finite subset $K_k \subset X_k$,
a relation $\mathcal R_k  \subset K_k \times K$, whose projection on each factor is surjective, such that for every $g, h \in P$, $x_k \mathcal R _k x$ and $y_k \mathcal R _k y$, 
we have 
$$
|d_k (g x_k, h y_k) - d (gx,hy) | \leq \epsilon.
$$
\end{definition}

Paulin showed that a sequence of metric spaces converges for this topology to its ultralimit.
\begin{theorem}[\cite{Paulin-ENS} Proposition 2.1 (4)]\label{conv-ultra-gromov-haus-equiv}
Let $(X_k, d_k, o_k)$, be pointed metric spaces with an isometric action of a group $\Gamma$ and $(X,d,o)$ be their ultralimit. Assume that 
the condition (\ref{group-action}) is satisfied, then 
the sequence $(X_k, d_k, o_k)$ converges to $(X,d, o)$ in the equivariant Gromov-Hausdorff topology.
\end{theorem}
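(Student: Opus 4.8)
The plan is to verify directly the definition of equivariant Gromov--Hausdorff convergence (\Cref{paulin}) for $(X,d,o)=(X_\omega,d_\omega,o_\omega)$, following Paulin; the entire argument is elementary bookkeeping with limits along $\omega$, and the only place where one must be slightly careful is ensuring that the distance sequences whose $\omega$-limits we take are bounded on a set belonging to $\omega$.

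First I would fix $\epsilon>0$, a finite subset $K=\{\xi_1,\dots,\xi_m\}\subset X_\omega$ and a finite subset $P\subset\Gamma$, and choose once and for all, for each $i$, a representative sequence $(x^{(i)}_k)_{k\in\N}\in X_\infty$ with $[(x^{(i)}_k)]=\xi_i$ --- possible since $X_\omega$ is by definition a quotient of $X_\infty$. I would then set, for each $k$, $K_k:=\{x^{(1)}_k,\dots,x^{(m)}_k\}\subset X_k$ and take $\mathcal R_k\subset K_k\times K$ to be the relation given by the pairs $(x^{(i)}_k,\xi_i)$ for $1\le i\le m$. Both projections of $\mathcal R_k$ are then surjective, so $\mathcal R_k$ is of the type required by \Cref{paulin}.

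The heart of the matter is the identity
\[
d_\omega(g\cdot\xi_i,h\cdot\xi_j)=\omegalim d_k(g x^{(i)}_k,h x^{(j)}_k)\qquad(g,h\in\Gamma,\ 1\le i,j\le m),
\]
which is immediate from the definitions of the $\Gamma$-action on $X_\omega$ (namely $g\cdot[(x_k)]=[(gx_k)]$) and of $d_\omega$ as the pseudo-metric $d_\infty$ evaluated on representatives. Before using it I would check that the right-hand side is meaningful, i.e. that $k\mapsto d_k(g x^{(i)}_k,h x^{(j)}_k)$ is bounded on a set belonging to $\omega$: since $g,h$ act isometrically, the triangle inequality bounds it above by $d_k(x^{(i)}_k,o_k)+d_k(g o_k,o_k)+d_k(h o_k,o_k)+d_k(x^{(j)}_k,o_k)$, and each of these four sequences has finite $\omega$-limit --- the outer two because $(x^{(i)}_k),(x^{(j)}_k)\in X_\infty$, the inner two by condition~\eqref{group-action}. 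Hence, by the defining property of the limit along a non-principal ultrafilter, for each fixed quadruple $(g,h,i,j)$ the set
\[
\Omega_{g,h,i,j}:=\bigl\{\,k\in\N:\ \bigl|\,d_k(g x^{(i)}_k,h x^{(j)}_k)-d_\omega(g\cdot\xi_i,h\cdot\xi_j)\,\bigr|\le\epsilon\,\bigr\}
\]
lies in $\omega$.

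Finally I would take $\Omega:=\bigcap_{g,h\in P}\bigcap_{1\le i,j\le m}\Omega_{g,h,i,j}$; this is a finite intersection of elements of $\omega$ because $P$ and $K$ are finite, hence $\Omega\in\omega$. For $k\in\Omega$ the pair $(K_k,\mathcal R_k)$ satisfies the required estimate: if $x_k\mathcal R_k\xi_i$ and $y_k\mathcal R_k\xi_j$ then $x_k=x^{(i)}_k$ and $y_k=x^{(j)}_k$, whence $|d_k(gx_k,hy_k)-d(g\cdot\xi_i,h\cdot\xi_j)|\le\epsilon$ for all $g,h\in P$ --- exactly the condition of \Cref{paulin}. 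I do not expect any genuine obstacle; the one point deserving a line of care is the $\omega$-essential boundedness above, which is what legitimises passing to the $\omega$-limits and guarantees that they control the sequence on a set belonging to $\omega$ rather than only "in the limit".
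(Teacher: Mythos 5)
Your argument is correct: choosing representative sequences for the finitely many points of $K$, using that $g\cdot[(x_k)]=[(gx_k)]$ and that $d_\omega$ is computed as the $\omega$-limit of $d_k$ on representatives (legitimate thanks to the boundedness check via the triangle inequality and condition \eqref{group-action}), and then intersecting the finitely many sets $\Omega_{g,h,i,j}\in\omega$ is exactly the standard verification of \Cref{paulin}. The paper itself gives no proof here — it simply cites Paulin's Proposition 2.1(4) — and your direct verification is essentially that reference's argument, with the one genuinely delicate point (boundedness along a set of the ultrafilter, so that the $\omega$-limits exist and control the sequences on sets in $\omega$) handled correctly.
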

Notice that if $(X',d, o) \subset (X,d, o)$ is a closed convex $\Gamma$-invariant subset, then $(X_k, d_k, o_k)$ also converges 
to $(X', d, o)$ in the equivariant Gromov-Hausdorff topology. In other words, this topology is not separated. In particular, the sequence also converges to the minimal subtree:
\begin{corollary}\label{GH-convergencerep}
Let  $\rho _k : \Gamma \rightarrow \Isom\,\H^N$ be a sequence of representations with minimal joint displacement $r_k$ satisfying $\lim_{k\to \infty} r_k = \infty$ and let
$(T, d_\omega, o_\omega)$ be the minimal $\rho _\infty (\Gamma)$-invariant separable subtree of  $\Cone_\omega (\H ^N, r_{k}^{-1} d, o_k)$. 

Then, the sequence $(\H ^N, r_{k}^{-1} d, o_k)$ converges to $(T, d_\omega, o_\omega)$ in the equivariant Gromov-Hausdorff topology.
\end{corollary}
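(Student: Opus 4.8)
The plan is to obtain this corollary as a direct instantiation of Paulin's result \Cref{conv-ultra-gromov-haus-equiv} together with the non-separatedness of the equivariant Gromov-Hausdorff topology observed immediately after it. There are really only two points to verify: that the rescaled actions of $\Gamma$ on $(\H^N, r_k^{-1}d, o_k)$ through $\rho_k$ satisfy the boundedness hypothesis \eqref{group-action}, and that the minimal subtree $T$ is a $\rho_\infty(\Gamma)$-invariant subset of the asymptotic cone, so that convergence to the cone restricts to convergence to $T$.

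First I would check \eqref{group-action}. By the choice of basepoints $o_k$ almost realizing the minimal joint displacement, we have $d(o_k,\rho_k(s)\,o_k)\le r_k+1$ for every generator $s\in S$, hence $r_k^{-1}d(o_k,\rho_k(s)\,o_k)\le 1+r_k^{-1}$, which is bounded in $k$. For an arbitrary $\gamma\in\Gamma$ written as a word of length $\ell$ in $S$, the triangle inequality gives $d(o_k,\rho_k(\gamma)\,o_k)\le \ell\,(r_k+1)$, so $r_k^{-1}d(o_k,\rho_k(\gamma)\,o_k)\le \ell\,(1+r_k^{-1})$ stays bounded. Therefore $\omegalim r_k^{-1}d(\rho_k(\gamma)\,o_k,o_k)<\infty$ for every $\gamma\in\Gamma$, which is exactly the hypothesis needed.

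Consequently \Cref{conv-ultra-gromov-haus-equiv} applies and yields that $(\H^N,r_k^{-1}d,o_k)$ converges, in the equivariant Gromov-Hausdorff topology, to its ultralimit $\Cone_\omega(\H^N,r_k^{-1}d,o_k)$, carrying the $\Gamma$-action through $\rho_\infty$. Finally, $T$ was introduced in \Cref{diverging} as a closed, convex, $\rho_\infty(\Gamma)$-invariant subtree of that cone; so I would invoke the remark following \Cref{conv-ultra-gromov-haus-equiv}: for any finite $K\subset T$ and finite $P\subset\Gamma$, the index set $\Omega\in\omega$, the finite sets $K_k\subset\H^N$ and the relations $\mathcal R_k\subset K_k\times K$ furnished by convergence to the full cone serve verbatim as the data required by \Cref{paulin} with target $T$, since all distances occurring in the estimate are computed inside the metric subspace $T$ (and the basepoint plays no role in that definition). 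Hence $(\H^N,r_k^{-1}d,o_k)$ converges to $(T,d_\omega,o_\omega)$ in the equivariant Gromov-Hausdorff topology.

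I do not expect a genuine obstacle here: the corollary is essentially a restatement of \Cref{conv-ultra-gromov-haus-equiv} for the minimal subtree, and the only slightly technical step, verifying \eqref{group-action}, is immediate from the way $o_k$ was chosen.
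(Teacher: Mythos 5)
Your proposal is correct and follows exactly the route the paper intends: the corollary is stated right after the remark that equivariant Gromov--Hausdorff convergence to the ultralimit (via \Cref{conv-ultra-gromov-haus-equiv}, once \eqref{group-action} is checked from the choice of $o_k$) passes to any closed convex $\rho_\infty(\Gamma)$-invariant subset, in particular to the minimal subtree $T$. Your explicit verification of \eqref{group-action} by the triangle inequality and your observation that the data of \Cref{paulin} for the full cone serve verbatim for $K\subset T$ are precisely the (implicit) content of the paper's argument.
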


\subsubsection{Convergence of kernels and asymptotic cones}
    
    We consider here a sequence of rescaled hyperbolic spaces
$(\H^N, r_{k}^{-1} d, o_k)$ with $r_k$ tending to infinity.
Recall that its asymptotic cone $\Cone_\omega (\H ^N, r_{k}^{-1} d, o_k)$ is a tree and contains a minimal invariant separable subtree, that we denote $(X,d_\omega)$. As such, for every $s > 0$, the function $$\beta _s (x,y) = e^{sd_\omega(x,y)}$$ is a kernel of hyperbolic type, see Section \ref{sec:kernels-examples}.
There exists an equivariant map $$F_s : X \rightarrow \H ^{\infty}$$
satisfying
$$
e^{sd(x,y)} = \cosh{d(F_s (x), F_s (y))}.
$$
On the other hand, for every $0< t < 1$, the function $$\beta ^t (x,y) = \cosh {d(x,y)}^t$$
is a kernel of hyperbolic type on $\H ^N$ with associated equivariant map 
$$
f_t : \H^N \rightarrow \H ^\infty ,
$$
satisfying
$$
\cosh {d(x,y)}^t = \cosh{d(f_t (x), f_t (y))}.
$$

The following proposition states that a suitable choices of a sequence $t_k$ makes those kernels converge.
\begin{proposition}\label{prop-convergence}
Fix $s>0$ and let $r_k$ be a sequence of real numbers tending to infinity. Set $t_k := s r_{k}^{-1}$. Then, 
for every $x=[(x_k)]$ and $y=[(y_k)]$ in $\Cone _\omega (\H ^N, r_k ^{-1}d, o_0)$, we have: 
$$\omegalim \beta^{t_k} (x_k, y_k) = \beta _s (x, y).$$
\end{proposition}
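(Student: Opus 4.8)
The plan is to reduce the proposition to an elementary estimate on the sequence of real numbers $D_k := d(x_k,y_k)$, where $d$ denotes the \emph{unrescaled} distance of $\H^N$. Since $x=[(x_k)]$ and $y=[(y_k)]$ lie in $\Cone_\omega(\H^N,r_k^{-1}d,o_0)$, the sequence $r_k^{-1}D_k$ is bounded (it is dominated by $r_k^{-1}d(x_k,o_0)+r_k^{-1}d(y_k,o_0)$, both of which have finite $\omega$-limit), and by the very definition of the ultralimit metric $\ell := d_\omega(x,y) = \omegalim r_k^{-1}D_k < \infty$. By definition $\beta_s(x,y) = e^{s\ell}$, whereas $\beta^{t_k}(x_k,y_k) = (\cosh D_k)^{t_k}$ with $t_k = s r_k^{-1}$; so everything comes down to proving $\omegalim (\cosh D_k)^{s/r_k} = e^{s\ell}$.

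First I would pass to logarithms. Writing $\cosh D_k = \tfrac12\, e^{D_k}(1+e^{-2D_k})$ gives the exact identity $\log\cosh D_k = D_k - \log 2 + \log(1+e^{-2D_k})$, valid for every $D_k\geq 0$ (the edge case $D_k=0$ included), hence
$$\frac{s}{r_k}\,\log\cosh D_k \;=\; s\,\frac{D_k}{r_k}\;-\;\frac{s\log 2}{r_k}\;+\;\frac{s}{r_k}\,\log\!\bigl(1+e^{-2D_k}\bigr).$$
Along the ultrafilter the first term converges to $s\ell$; the second converges to $0$ since $r_k\to\infty$; and the third converges to $0$ because $0\leq \log(1+e^{-2D_k})\leq \log 2$ is bounded while $r_k^{-1}\to 0$. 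Using that $\omegalim$ is linear and monotone on bounded sequences, we get $\omegalim \tfrac{s}{r_k}\log\cosh D_k = s\ell$; in particular the sequence $\bigl((\cosh D_k)^{s/r_k}\bigr)_k$ is bounded, so its $\omega$-limit is defined.

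To conclude I would apply the exponential. Since $\omegalim$ commutes with continuous functions evaluated on bounded sequences (equivalently: $u\mapsto e^u$ is continuous and the logarithmic sequence converges along $\omega$), we obtain
$$\omegalim (\cosh D_k)^{s/r_k} \;=\; \exp\!\Bigl(\omegalim \tfrac{s}{r_k}\log\cosh D_k\Bigr) \;=\; e^{s\ell} \;=\; \beta_s(x,y).$$
As a byproduct, the left-hand side depends only on $\ell = d_\omega(x,y)$ and is therefore independent of the chosen representatives $(x_k),(y_k)$, consistently with $\beta_s$ being well defined on the cone.

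There is no genuine obstacle here: the statement is a routine limit computation, and the only points demanding (mild) care are (i) checking that all sequences involved are bounded, so that their limits along $\omega$ make sense; (ii) noting that the bound $0\leq \log(1+e^{-2D_k})\leq\log 2$ requires no lower bound on $D_k$; and (iii) invoking the standard facts that limits along a non-principal ultrafilter are linear, monotone and commute with continuous maps on bounded sequences, as recalled in \Cref{sec:asymptotic-cones}.
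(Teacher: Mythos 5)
Your proof is correct and takes essentially the same route as the paper: both reduce the statement to the elementary asymptotics of $(\cosh d(x_k,y_k))^{s/r_k}$ along the ultrafilter. The only difference is organizational — the paper splits into the cases $d_\omega(x,y)=0$ and $d_\omega(x,y)>0$ and uses squeeze bounds, whereas your exact identity $\log\cosh D = D-\log 2+\log(1+e^{-2D})$ treats both cases uniformly.
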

\begin{proof}
Let $x=[(x_k)]$ and $y=[(y_k)]$ be two points in $\Cone _\omega (\H ^N, r_k ^{-1}d, o_0)$. 

\medskip

\noindent
$\bullet$ Case 1. Assume that $x=y$, i.e. $d_\omega ([(x_k)], [(y_k)]) = \omegalim r_k ^{-1} d(x_k, y_k) =0$. Notice that 
$$
1\leq \cosh{d(x_k, y_k)} ^{t_k} \leq e^{t_{k} d(x_k, y_k)},
$$
therefore, since $t_k = s r_k ^{-1}$, we get
$$
\omegalim \cosh{d(x_k, y_k)} ^{t_k}  =1,
$$
hence 
$$
\omegalim \cosh{d(x_k, y_k)} ^{t_k}  =  e^{s d_\omega (x,y)},
$$
that is,
$$
\omegalim \beta^{t_k} (x_k, y_k)   = \beta _s  (x, y).
$$

\noindent
$\bullet$ Case 2. We assume now that $d_\omega ([(x_k)], [(y_k)]) = \omegalim r_k ^{-1} d(x_k, y_k) >0$. In that case, we have
$$
\omegalim e^{-d(x_k, y_k)} =0,
$$
therefore, as $t_k\to 0$,
$$
\omegalim \cosh{d(x_k, y_k)} ^{t_k} =  \omegalim \left( \frac{e^{d(x_k, y_k)}}{2}\right) ^{t_k} =  e^{s d_\omega (\bar x, \bar y)},
$$
thus,
$$
\omegalim \beta^{t_k} (x_k, y_k)  =  \beta _s  (x, y).
$$
This finishes the proof of the proposition.
\end{proof}

We will prove in \Cref{sec:partial-hausdorff-convergence} that for every $s\geq 0$, the maps 
$f_{t_k} : X_k \rightarrow \H ^\infty$ converge,
in a suitable sense, to the map $F_s : T \rightarrow \H ^{\alpha}$ for $t_k := s r_{k}^{-1}$. But before we provide a version of \Cref{thm:bowen-CAT(-1)} with ultrafilters.

\subsubsection{A version with ultrafilters of Bowen Theorem}\label{sec:Bowen-ultrafilter}

Let us fix an ultrafilter $\omega$. We claim that, in Theorem \ref{intro-bowen}, if one assumes that the orbital maps $(\tau _k)_{k\in\N}$ only converges to $\tau _\infty$ along the ultrafilter, then the conclusions remain unchanged upon replacing limits by $\omega$-limits.  
The proof of this new version is identical to the one of Theorem \ref{thm:bowen-CAT(-1)}, replacing limits by $\omega$-limits. Indeed, the crucial point is \Cref{prop:convergenceLimitSets}; in this proposition, once fixed $\epsilon$, only a finite number of points in the orbit $R_k(\F_r)\cdot o$ matter. As an ultrafilter is stable under finite intersections, if $\tau_k\xrightarrow{c.s.}\tau_\infty$ along the ultrafilter, one get that $\omegalim \max_{\partial\F_r}|f_k-f_\infty|=0$. This leads to
\begin{theorem}[Bowen theorem for $\omega$-limits]\label{thm:Bowen-omega}
Let $(X,d,o)$ be a complete pointed $\CAT(-1)$-space; consider a sequence $(R_k)_{k\in \N}$ in $\RepX(\F_r)$ and $R_\infty\in \SchottkyspaceX(\F_r)$ a Schottky representation.
Assume that the sequence of orbit maps $(\tau_k)_{k\in \N}$ converges point-wise to $\tau_\infty$ along the ultrafilter $\omega$: 
    $$\forall g\in\F_r,\quad \omegalim \tau_k(g)= \tau_\infty(g).$$   
Then we have:
\begin{enumerate}
    \item The sequence of limit sets $(\Lambda_k)_{k\in \N}$ of $(R_k)_{k\in \N}$ converges along $\omega$, in the Hausdorff topology of $\partial X$, to the limit set $\Lambda_\infty$ of $R_\infty$.
    \item $\omegalim \Hdim \Lambda_k= \Hdim \Lambda_\infty$.
\end{enumerate}
\end{theorem}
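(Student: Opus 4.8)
The plan is to adapt, essentially verbatim, the proof of \Cref{thm:bowen-CAT(-1)} given in \Cref{sec:Bowen-extension}, systematically replacing ordinary limits over $\N$ by limits along the ultrafilter $\omega$. The key observation making this work is that the only place where the pointwise convergence hypothesis is used is in \Cref{prop:convergenceLimitSets}, and that proof uses only \emph{finitely many} constraints at a time. So first I would revisit that proposition: fix $0<\epsilon<1$. By \Cref{thm:quasi_isometric_open}, applied with the $\omega$-convergence hypothesis, the set of indices $k$ for which $R_k$ is $K'$-quasi-isometric belongs to $\omega$; discarding a set not in $\omega$, we may assume all $R_k$ are $K'$-quasi-isometric (this is legitimate since we only care about $\omega$-limits). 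Then, exactly as in the original proof, pick $n$ large enough that $e^{-d(o,R_k(g)\cdot o)/2}\leq \epsilon$ for all $g$ of length $\geq n$ and all $k$. Next, the crucial replacement: we want $k_0$ such that $d(R_k(g)\cdot o,R_\infty(g)\cdot o)\leq C_{K'}$ for all $g$ of length $\leq n$. Since the ball of radius $n$ in $\F_r$ is finite and $\omega$ is a filter stable under finite intersections, the set of $k$ satisfying this for \emph{all} such $g$ simultaneously is a finite intersection of sets in $\omega$, hence belongs to $\omega$. The rest of the shadow argument (\Cref{stability_geodesic} and \Cref{prop:diam_shadows}) is pointwise in $k$ and goes through unchanged, giving $d_o(\tau_k(\xi),\tau_\infty(\xi))\leq c_{2C_{K'}}\epsilon$ for all $k$ in a set belonging to $\omega$ and all $\xi\in\partial\F_r$.

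From this, the $\omega$-analogues of the two conclusions follow formally. For (1), Hausdorff convergence of $\Lambda_k$ to $\Lambda_\infty$ along $\omega$ means exactly that for every $\epsilon$ the set of $k$ with $\Lambda_k$ in the $\epsilon$-Hausdorff-neighborhood of $\Lambda_\infty$ lies in $\omega$; this is immediate from the uniform estimate just obtained together with $\Lambda_\infty=\tau_\infty(\partial\F_r)$, $\Lambda_k=\tau_k(\partial\F_r)$. For (2), the same uniform estimate gives that $\sup_{\partial\F_r}|f_k-f_\infty|\leq c\epsilon$ on a set of $k$ belonging to $\omega$ (the Busemann-function Lipschitz argument in the last paragraph of the proof of \Cref{prop:convergenceLimitSets} needs only that $\sigma_1$ ranges over the finite set $S$ and the uniform closeness of $\tau_k(\xi)$ to $\tau_\infty(\xi)$), i.e. $\omegalim \sup_{\partial\F_r}|f_k-f_\infty|=0$. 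Then \Cref{prop:delta_rho} gives $\Hdim\Lambda_k=\delta_k$ and $\Hdim\Lambda_\infty=\delta_\infty$ (the single-representation statements are unchanged), and the second part of \Cref{gibbs}—continuity of the Gibbs exponent in the uniform norm on the Hölder function—upgrades $\omegalim\sup|f_k-f_\infty|=0$ to $\omegalim\delta_k=\delta_\infty$, which is precisely $\omegalim\Hdim\Lambda_k=\Hdim\Lambda_\infty$.

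I do not expect any genuine obstacle here; the statement is deliberately a routine filter-theoretic strengthening. The one point requiring a little care is making sure that each finite bookkeeping step (application of \Cref{thm:quasi_isometric_open}, the choice of $k_0$ handling all words of length $\leq n$, and the finite range of $\sigma_1$) produces a condition holding on a set in $\omega$ rather than merely cofinitely; this is exactly where stability of $\omega$ under finite intersection is invoked, and it is the reason the argument does not need the full pointwise convergence over $\N$. One should also note that "$\omega$-limit of a sequence of closed subsets in the Hausdorff topology" must be given a meaning—either as the ordinary Hausdorff limit of the net indexed by $\omega$, or more concretely via the $\epsilon$-neighborhood formulation above—but this is purely a matter of phrasing and is already implicit in the statement. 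Hence the proof reduces to: \emph{repeat the proof of \Cref{thm:bowen-CAT(-1)} with $\lim$ replaced by $\omegalim$ throughout, using that $\omega$ is closed under finite intersections wherever finitely many indices were previously discarded}, as sketched in the paragraph preceding the theorem.
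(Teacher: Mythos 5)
Your proposal is correct and follows exactly the route the paper itself takes: repeat the proof of \Cref{thm:bowen-CAT(-1)}, observing that \Cref{prop:convergenceLimitSets} only ever imposes finitely many conditions at a time (the finite ball for \Cref{thm:quasi_isometric_open}, the words of length at most $n$, the finite generating set for $f_k$), so stability of $\omega$ under finite intersections yields $\omegalim\sup_{\partial\F_r}|f_k-f_\infty|=0$, and then \Cref{prop:delta_rho} and the continuity part of \Cref{gibbs} conclude. No substantive difference from the paper's argument.
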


\section{Partial Hausdorff convergence of embeddings}\label{sec:partial-hausdorff-convergence}

We consider a finitely generated group $\Gamma$ and a diverging sequence of representations $\rho_k \in \Repr_{\H^N} (\Gamma)$ with minimal joint displacement $r_k$. We have recalled the construction of the asymptotic cone, and the limiting action of $\Gamma$ on a separable minimal tree $T$, see \Cref{sec:asymptotic-cones}. We have recalled that the sequence of pointed metric spaces $(X_k,d_k,o_k):=(\H^N,r_k^{-1}d,o_k)$ converges in the equivariant Gromov-Hausdorff topology to $(T,d,o)$, see \Cref{paulin}.
We have also defined, for every $s\geq 0$, the embeddings  
$f_{t_k} : X_k \rightarrow \H ^\infty$ and $F_s : T \rightarrow \H ^{\alpha}$ for $t_k := s r_{k}^{-1}$, which are equivariant for representations $\nu_{t_k}$ and $\mu_s$, see \Cref{sec:EmbeddingKernel}. 

The goal of this section is to introduce a notion of convergence of a sequence embeddings, the \emph{partial equivariant Hausdorff convergence} such that the images of $f_{t_k}$ converge to the image of $F_s$. Recall from \Cref{nohausdorffconv} that it may not be possible that the Hausdorff distance between the images goes to $0$; so our notion will be weaker than plain Hausdorff convergence.
Let us consider more generally a sequence of 
pointed separable metric spaces 
$(X_k, d_k, o_k)$ for $k\in \N$
and $(X,d,o)$ a pointed separable metric space, with isometric actions 
$\rho_k : \Gamma \rightarrow \Isom\, X_k$ and 
$\rho : \Gamma \rightarrow \Isom\, X$, such that $X_k$ converges to $X$ in the equivariant Gromov-Hausdorff topology, cf. \Cref{paulin}.
The idea is to choose a sequence of embeddings $\phi_k$ and sequences of ever bigger finite sets $K_k\subset X_k$, 
such that the Hausdorff limit of the sequence $\phi_k (K_k)$ is the whole image $\phi (X)$ of the space $X$, this convergence being equivariant. Before setting the definition, we introduce some notations.

Let us fix an ultrafilter $\omega$. Let us consider a metric space $\mathcal H$ and (non necessarily isometric) embeddings $\phi :X \hookrightarrow \mathcal H$ and 
$\phi_k : X_k \hookrightarrow \mathcal H$. We consider isometric actions $\rho _\infty \in \RepX (\Gamma)$, $\rho _k \in \Repr_{X_k} (\Gamma)$ and $R_k , \, R_\infty \in \Repr_{\mathcal H} (\Gamma)$ such that the embeddings $\phi$ and $\phi_k$ are equivariant: for every $\gamma \in \Gamma$, 
$\phi \circ \rho _\infty (\gamma)= 
R_\infty (\gamma) \circ \phi$, and 
$\phi _k \circ \rho_k (\gamma) = R_k (\gamma) \circ \phi_k$.

\begin{definition}[Partial equivariant Hausdorff convergence]\label{def:partial-Hausdorff-convergence}
Given a metric space $\mathcal H$ and an equivariant embedding 
$\phi : X \hookrightarrow \mathcal H$, we say that a sequence of equivariant embeddings $\phi_k : X_k \hookrightarrow \mathcal H \, \, , k\in \N$, realizes a partial equivariant Hausdorff convergence of $\phi_k (X_k)$ to $\phi (X)$ in $\mathcal H$ along the ultrafilter if there exists a sequence of finite subsets $K_k\subset X_k$, $k\in\N$, such that:
\begin{itemize}
    \item Hausdorff convergence along $\omega$: $\phi(X)$ is the closure of the set of $\omega$-limit of sequences $(x_k)$, where $x_k\in K_k$:
            $$
            \phi (X) = \overline{\{\omegalim \phi_k(x_k),\; \textrm{ for sequences }(x_k)\textrm{ with }x_k\in K_k\}},$$
    \item Equivariance along $\omega$:  for every $\gamma \in \Gamma$, $x \in X$ and sequence $x_k \in X_k$ such that $\phi(x) = \omegalim_k \phi _k (x_k) $, we have 
    $$\phi (\rho(\gamma)\cdot x) = \omegalim _k \phi _k (\rho_k(\gamma) \cdot x_k).$$
\end{itemize}
\end{definition}

\subsection{Partial equivariant Hausdorff convergence of renormalizations}

The following Theorem states that, up to choosing isometries of $\H^\infty$, the sequence $f_{t_k}$ realizes a partial equivariant Hausdorff convergence of $f_{t_k}(X_k)$ to $F_s(T)$. 
\begin{theorem}[Partial Hausdorff convergence]\label{thm:partial-hausdorff-convergence}
For any diverging sequence $\rho_k \in \Repr_{\H^N} (\Gamma)$, with the notations above, there exist, for $k\in \N$, isometries $\mathcal I_k \in \Isom\,\H^\infty$, representations $R_k$ and $R_\infty$ in $\Repr _{\H^\infty}(\Gamma)$, such that the sequence $\phi _k := \mathcal I _k \circ f_{t_k}$ realizes a partial equivariant Hausdorff convergence of $\phi _k(X_k)$ to $F_s(T)$ in $\H^\infty$ along the ultrafilter. 
\end{theorem}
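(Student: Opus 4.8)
The plan is to feed the kernel-convergence of \Cref{prop-convergence} into the rigidity statement \Cref{approxkernel} by means of a diagonal argument over an exhaustion of $T$ by finite subsets, and then to read off both the Hausdorff and the equivariance clauses of \Cref{def:partial-Hausdorff-convergence} from a single uniform convergence statement for the maps $\phi_k$.

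First I would fix a countable dense subset $D=\{z^{(1)},z^{(2)},\dots\}$ of $T$ with $z^{(1)}=o_\omega$, and for each $i$ a representative sequence $z^{(i)}=[(z^{(i)}_k)_k]$, $z^{(i)}_k\in\H^N$; write $K^{(n)}=\{z^{(1)},\dots,z^{(n)}\}\subset T$. Recall that $\beta_s(x,y)=e^{sd_\omega(x,y)}$ on $T$ and $\beta^{t_k}(x,y)=(\cosh d(x,y))^{t_k}$ on $\H^N$ are kernels of hyperbolic type with associated embeddings $F_s$ and $f_{t_k}$ into $\H^\infty$. For each fixed $n$, \Cref{approxkernel} applied with $X_1=T$, $\epsilon=1/n$ and $K=K^{(n)}$ supplies a threshold $\eta_n=\eta(1/n,K^{(n)})>0$ depending only on the finite configuration $K^{(n)}\subset T$, \emph{not on $k$} --- this independence is exactly what makes the diagonalization possible. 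Since \Cref{prop-convergence} gives $\omegalim\beta^{t_k}(z^{(i)}_k,z^{(j)}_k)=\beta_s(z^{(i)},z^{(j)})$ for each of the finitely many pairs $i,j\le n$, stability of $\omega$ under finite intersection yields a decreasing sequence $\Omega_n\in\omega$ such that for $k\in\Omega_n$ the map $\phi^{(n)}_k\colon z^{(i)}\mapsto z^{(i)}_k$ is injective on $K^{(n)}$ and the kernels $(\beta_s,\beta^{t_k})$ are $(K^{(n)},\phi^{(n)}_k,\eta_n)$-close; \Cref{approxkernel} then produces an isometry $\mathcal G^{(n)}_k$ of $\H^\infty$ with $\mathcal G^{(n)}_k\circ F_s$ and $f_{t_k}\circ\phi^{(n)}_k$ being $(1/n)$-close on $K^{(n)}$. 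Diagonalizing, I would put $n(k)=\max\{n\le k:\ k\in\Omega_n\}$ (so $n(k)\to\infty$ along $\omega$, since $\Omega_N$ and the cofinite set $\{k\ge N\}$ both lie in $\omega$), and then define $\mathcal I_k=(\mathcal G^{(n(k))}_k)^{-1}$, $\phi_k=\mathcal I_k\circ f_{t_k}$, $R_k(\gamma)=\mathcal I_k\circ\nu_{t_k}(\rho_k(\gamma))\circ\mathcal I_k^{-1}$ and $R_\infty=\mu_s\circ\rho_\infty$. By construction $\phi_k$ is $R_k$-equivariant, $F_s$ is $R_\infty$-equivariant, and $d(\phi_k(z^{(i)}_k),F_s(z^{(i)}))\le 1/n(k)$ whenever $i\le n(k)$, so $\omegalim\phi_k(z^{(i)}_k)=F_s(z^{(i)})$ for every $i$.

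Next I would establish the key uniform statement: for any sequence $(w_k)$ in $\H^N$ with $[(w_k)]\in T$ one has $\omegalim\phi_k(w_k)=F_s([(w_k)])$, and conversely $\omegalim\phi_k(x_k)=F_s(x)$ with $x\in T$ forces $[(x_k)]=x$. Both rest on the identity $\cosh d(\phi_k(a_k),\phi_k(b_k))=\beta^{t_k}(a_k,b_k)$ (valid since $\mathcal I_k$ is an isometry) together with the proof of \Cref{prop-convergence}, which gives $\omegalim\beta^{t_k}(a_k,b_k)=e^{s\,d_\omega([(a_k)],[(b_k)])}=\cosh d(F_s([(a_k)]),F_s([(b_k)]))$ as soon as the two classes lie in $\Cone_\omega(\H^N,r_k^{-1}d,o_k)$: for the forward statement I approximate $[(w_k)]$ by points $z^{(i_n)}\in D$, note $\omegalim d(\phi_k(w_k),\phi_k(z^{(i_n)}_k))=d(F_s([(w_k)]),F_s(z^{(i_n)}))\to 0$ and $\omegalim\phi_k(z^{(i_n)}_k)=F_s(z^{(i_n)})\to F_s([(w_k)])$; for the converse I first observe that $F_s(x)=\omegalim\phi_k(x_k)$ bounds $\omegalim\beta^{t_k}(x_k,o_k)$, whence $[(x_k)]\in\Cone_\omega(\H^N,r_k^{-1}d,o_k)$, and then $d_\omega([(x_k)],z^{(i)})=d_\omega(x,z^{(i)})$ for all $i$, hence for all $y\in T$ by density, so $y=x$ gives $[(x_k)]=x$. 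Finally, with $K_k=\{z^{(i)}_k:\ i\le n(k)\}\subset X_k$ (finite), I would conclude: the containment $F_s(T)\subset\overline{\{\omegalim\phi_k(x_k):\ x_k\in K_k\}}$ holds because each $F_s(z^{(i)})$ is such an $\omega$-limit, $D$ is dense, $F_s$ is continuous and $F_s(T)$ is closed in $\H^\infty$ ($F_s$ being a quasi-isometric embedding of the complete space $T$ by \Cref{QI2}); the reverse containment holds because $\phi_k(K_k)$ lies in the $1/n(k)$-neighbourhood of $F_s(T)$ and $1/n(k)\to 0$ along $\omega$; and the $\omega$-equivariance follows by applying the forward statement to $w_k=\rho_k(\gamma)x_k$ after using the converse statement to identify $[(x_k)]=x$ and hence $\rho_\infty(\gamma)x=[(\rho_k(\gamma)x_k)]\in T$.

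The main obstacle, and the heart of the argument, is the diagonalization: \Cref{prop-convergence} only controls each finite sub-configuration on an $\omega$-set $\Omega_n$, and these sets have empty intersection, so no single isometry works for all $k$ at once. What saves the situation is the uniformity built into \Cref{approxkernel} --- the threshold $\eta_n$ depends only on the fixed finite set $K^{(n)}\subset T$ --- which allows the number $n(k)$ of controlled points to grow slowly with $k$ while keeping the error $1/n(k)$ small along $\omega$. A secondary technical point is upgrading $\omegalim\phi_k(z^{(i)}_k)=F_s(z^{(i)})$ from the chosen dense family to arbitrary representative sequences of points of $T$, needed both for the reverse Hausdorff inclusion and for the $\omega$-equivariance; this is where closedness of $F_s(T)$ and the uniform kernel estimate of \Cref{prop-convergence} come in.
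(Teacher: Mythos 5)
Your proof is correct (up to a harmless convention for the few $k$ where $n(k)$ is not yet defined, where one may take $\mathcal I_k=\mathrm{id}$ and $K_k=\{o_k\}$), but it takes a genuinely different route from the paper on the equivariance clause. The paper builds the group action into the approximation data from the start: its sets $I(l,\epsilon_l)$ demand distance and kernel closeness on $\Gamma_l\cdot E_l$ against $\Gamma_l\cdot K_{k,l}$, so that \Cref{approxkernel} produces isometries aligning the \emph{translated} configurations (\Cref{estimate-mathcal-I}), and equivariance along $\omega$ is then a direct triangle-inequality estimate. You instead anchor only the untranslated points $z^{(i)}_k$ and recover equivariance a posteriori by a rigidity argument: since $\mathcal I_k$ is an isometry, $\cosh d(\phi_k(a),\phi_k(b))=\beta^{t_k}(a,b)$ for \emph{all} $a,b\in\H^N$, so \Cref{prop-convergence} controls all mutual distances of $\phi_k$-images; your forward statement ($\omegalim\phi_k(w_k)=F_s([(w_k)])$ whenever $[(w_k)]\in T$), applied to $w_k=\rho_k(\gamma)x_k$ after the converse identification $[(x_k)]=x$, yields the clause via the definition of the limit action on the cone and the $\rho_\infty$-invariance of $T$. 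This buys a lighter construction (no lifting of group translates, no $\Gamma_l$-decorated correspondences), at the price of the extra converse/identification step. Two smaller differences: your cap $n(k)\le k$ makes finiteness of the diagonal index automatic, so you never need the paper's \Cref{lem:no-tree-in-Xi}, whereas the paper's $\ell(k)$ does; and you take lifts directly as representative sequences of cone points, bypassing the equivariant Gromov--Hausdorff formalism of \Cref{conv-ultra-gromov-haus-equiv}, which is legitimate since \Cref{prop-convergence} is stated for representatives. Both arguments share the essential uniformity (the threshold $\eta(\epsilon,K)$ of \Cref{approxkernel} depends only on the finite subset of $T$, not on $k$), the same $R_k=\mathcal I_k\circ(\nu_{t_k}\circ\rho_k)\circ\mathcal I_k^{-1}$ and $R_\infty=\mu_s\circ\rho_\infty$, and closedness of $F_s(T)$ via \Cref{QI2} for the reverse Hausdorff inclusion; when invoking \Cref{approxkernel} do keep, as you did, the injectivity remark, since the dimension property of \Cref{total-n} requires the anchor points to be pairwise distinct in both spaces.
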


The sections \ref{sec:lifts} and \ref{sec:approx} are devoted to the proof of this Theorem. Our approach relies heavily on the separability of the minimal tree $T$.
We fix, throughout this section, an increasing sequence of finite subsets of $T$:
$$E_0=\{o\}\subset E_1 \subset ..... \subset E_l \subset E_{l+1} \subset....$$ such that $\# E_l = l+1$, and 
$E := \cup_l E_l$ is a countable dense subset of $T$. In the sequel, we will denote its points by
$$
E_l = \{x^i \,|\, 0\leq i \leq l \};
$$
this notation makes sense as $E_l\subset E_{l+1}$. We will also fix 
once and for all an increasing sequence $(\Gamma_k)_{k\in \N}$ of finite subsets of $\Gamma$ such that $\Gamma_0=\{e\}$ and $\cup_\N \Gamma_k=\Gamma$. Moreover, for a subset $S_k\subset X_k$, we define the subset
\[\Gamma_k\cdot S_k:=\{\rho_k(\gamma)(s)|s\in S_k, \gamma\in \Gamma_k\}.\]
As $\Gamma$ acts on $T$ through $\rho_\infty$, we denote by $\Gamma_l\cdot E_l$ the set of all images of points in $E_l$ by elements of $\Gamma_l$.

We will use the equivariant Gromov-Hausdorff convergence for each pair $(E_l,\Gamma_l)$ as well as \Cref{approxkernel}; we will try to make $l$ go to $\infty$ as well as $k$ for achieving the partial equivariant Hausdorff convergence. This can be viewed as an application of the diagonal extraction procedure. 

\subsection{A sequence of lifts of ever bigger sets}\label{sec:lifts}

For $\epsilon>0$ and $l\in \N$, denote by $\eta (\epsilon, \Gamma_l\cdot E_l)$ the number given by Proposition \ref{approxkernel}. 
By Theorem \ref{conv-ultra-gromov-haus-equiv} and Proposition \ref{prop-convergence}, there exists a subset $I(l, \epsilon)$ in $\omega$ verifying the property of the definition of equivariant Gromov-Hausdorff convergence. The actual definition\footnote{This definition is easier to understand assuming that the relations $\mathcal R_{k,l,\epsilon}$ are maps $E_l\to X_k$: it would say that one can lift, for any $k\in I(l,\epsilon)$, the set $E_l$ in a set $K_{k,l,\epsilon}$ such that the relative distances, resp. values of kernels, between points in $\Gamma_l\cdot E_l$ and their images in $\Gamma_l \cdot K_{k,l,\epsilon}$ coincide up to $\epsilon$, resp $\eta(\epsilon,E_l)$.} of $I(l,\epsilon)$ reads:
\begin{definition}\label{def:K_ile}
For an integer $l$ and $\epsilon>0$, we define the set $I(l, \epsilon)$ as the set of integers $k$ such that there is a subset $K_{k,l,\epsilon}\subset X_k$, a relation $\mathcal R_{k,l,\epsilon} \subset E_l\times K_{k,l,\epsilon}$ that verifies, for every $x^i, x^{i'} \in E_l$, every $\gamma^i,\gamma^{i'}\in \Gamma_l$ and every $x^i_{k,l,\epsilon}$, $x^{i'}_{k,l,\epsilon}$ in $K_{k,l,\epsilon}$ related to respectively $x^i$ and $x^{i'}$,  the two inequalities:
\begin{align}
|d_T (\gamma^{i}\cdot x^i,\gamma^{i'}\cdot x^{i'}) - d_{X_k} (\gamma^{i}\cdot x^{i}_{k,l,\epsilon}, \gamma^{i'}\cdot x^{i'}_{k,l,\epsilon})| \leq & \epsilon.\nonumber\\
|\beta_s (\gamma^{i}\cdot x^i, \gamma^{i'}\cdot x^{i'}))- \beta ^{t_k}(\gamma^{i}\cdot x^{i}_{k,l,\epsilon}, \gamma^{i'}\cdot x^{i'}_{k,l,\epsilon})| \leq & {\eta (\epsilon, \Gamma_l\cdot E_l)} ,\label{approx-hilbert}
\end{align}
\end{definition}
Note that for all $\epsilon$, we have $I(0,\epsilon)=\N$ as one can always isometrically lift a point in $T$ to any $X_k$, equivariantly under the action of $\Gamma_0=\{e\}$. Moreover, by Theorem \ref{conv-ultra-gromov-haus-equiv} and Proposition \ref{prop-convergence}, for all $\epsilon>0$ and $l\in \N$, the set $I(l,\epsilon)$ belongs to the ultrafilter $\omega$.

We first state the following easy monotony property of the family of sets $I(l,\epsilon)$. 
\begin{lemma}\label{mono_Il}
If $l'\geq l$ and $\epsilon'\leq \epsilon$, we have $I(l',\epsilon')\subset I(l,\epsilon)$.
\end{lemma}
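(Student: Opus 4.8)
The plan is to unwind both definitions and reduce everything to two elementary monotonicity facts: one about the closeness parameter $\eta$ supplied by \Cref{approxkernel}, and one about restricting lifts and relations to smaller sets. First I would recall that, by \Cref{def:K_ile}, an integer $k$ belongs to $I(l,\epsilon)$ precisely when there exists a subset $K_{k,l,\epsilon}\subset X_k$ and a relation $\mathcal R_{k,l,\epsilon}\subset E_l\times K_{k,l,\epsilon}$ with surjective projections satisfying the two displayed inequalities for all points of $\Gamma_l\cdot E_l$ and their related images; so it suffices, given $k\in I(l',\epsilon')$, to produce such a witness for the pair $(l,\epsilon)$.

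The key observation is that $\eta(\epsilon,\Gamma_l\cdot E_l)$ is nondecreasing in $\epsilon$ and, when the finite set shrinks, one may take the $\eta$ for the larger set: indeed the proof of \Cref{approxkernel} (via \Cref{approxhyperbolic}) allows choosing $\eta$ for $\Gamma_l\cdot E_l$ to be at most $\eta$ for $\Gamma_{l'}\cdot E_{l'}$ when $l'\ge l$, since $\Gamma_l\cdot E_l\subset \Gamma_{l'}\cdot E_{l'}$ and shrinking the configuration of points only makes the approximation problem easier. Hence $\eta(\epsilon',\Gamma_{l'}\cdot E_{l'})\le \eta(\epsilon,\Gamma_l\cdot E_l)$ whenever $l'\ge l$ and $\epsilon'\le\epsilon$. (If the paper's convention is instead that $\eta$ is simply fixed independently of the ambient metric space, the same conclusion holds a fortiori.)

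Now suppose $k\in I(l',\epsilon')$ with witnesses $K_{k,l',\epsilon'}$ and $\mathcal R_{k,l',\epsilon'}$. Since $E_l\subset E_{l'}$, define $\mathcal R_{k,l,\epsilon}:=\mathcal R_{k,l',\epsilon'}\cap(E_l\times X_k)$ and let $K_{k,l,\epsilon}$ be the image of this restricted relation under the second projection. The projection onto $E_l$ is still surjective because every point of $E_l\subset E_{l'}$ was related to something; the projection onto $K_{k,l,\epsilon}$ is surjective by construction. For points $x^i,x^{i'}\in E_l$ and $\gamma^i,\gamma^{i'}\in\Gamma_l\subset\Gamma_{l'}$, the first inequality holds with $\epsilon$ in place of $\epsilon'$ since $\epsilon'\le\epsilon$, and the second holds with $\eta(\epsilon,\Gamma_l\cdot E_l)$ in place of $\eta(\epsilon',\Gamma_{l'}\cdot E_{l'})$ by the monotonicity of $\eta$ just noted. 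Therefore $k\in I(l,\epsilon)$, which proves $I(l',\epsilon')\subset I(l,\epsilon)$.

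The only point requiring care — the ``main obstacle'' — is the monotonicity of $\eta$ in the finite set: one must be sure that the constant produced by \Cref{approxkernel} for $\Gamma_l\cdot E_l$ can be chosen no smaller than the one for $\Gamma_{l'}\cdot E_{l'}$. This is genuinely a property of the construction in \Cref{approxhyperbolic} (the $\eta$ there depends on the fixed configuration $u_0,\dots,u_n$, and adding more points only tightens the requirement), so I would phrase the lemma's proof so that $\eta(\epsilon,\cdot)$ is, by definition, taken to be monotone — e.g. replacing it by $\inf$ over all finite supersets inside a fixed enumeration — which makes the restriction argument automatic. Everything else is bookkeeping about restricting relations and using $\epsilon'\le\epsilon$.
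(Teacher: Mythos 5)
Your overall route is the paper's own: the paper disposes of this lemma with a one-line remark (a lift satisfying the stricter condition also works in the easier situation), and your restriction of the relation $\mathcal R_{k,l',\epsilon'}$ to $E_l\times X_k$, together with $E_l\subset E_{l'}$, $\Gamma_l\subset\Gamma_{l'}$ and $\epsilon'\le\epsilon$, is exactly the bookkeeping behind that remark. You are also right that the only step with real content is the comparison $\eta(\epsilon',\Gamma_{l'}\cdot E_{l'})\le\eta(\epsilon,\Gamma_l\cdot E_l)$, which the paper passes over silently and which is a matter of choosing the constants of \Cref{approxkernel} coherently.

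However, the way you secure that comparison is flawed. First, the claim that the $\eta$ produced for the larger configuration "works" for the smaller one does not follow from the statement of \Cref{approxkernel} (to invoke it for $\Gamma_{l'}\cdot E_{l'}$ you would need $\phi$ defined, and the kernels close, on that whole set), and it is not needed. Second, your proposed normalization by an infimum over all finite supersets goes the wrong way and may be degenerate: if $K\subset K'$, the supersets of $K'$ form a subcollection of the supersets of $K$, so this infimum is \emph{nondecreasing} in the set, whereas you need $\eta$ to be \emph{nonincreasing} in the set; moreover the infimum runs over infinitely many configurations and can well be $0$, which is not an admissible constant in \Cref{def:K_ile}. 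The elementary fix uses only that the conclusion of \Cref{approxkernel} is preserved when $\eta$ is decreased (the closeness hypothesis becomes stronger) and when $\epsilon$ is increased (the conclusion becomes weaker): having fixed any admissible values $\eta_0(\epsilon,\Gamma_m\cdot E_m)$, replace them by $\eta(\epsilon,\Gamma_l\cdot E_l):=\min_{m\le l}\eta_0(\epsilon,\Gamma_m\cdot E_m)$, a finite minimum over the nested family, hence positive, still admissible for $(\epsilon,\Gamma_l\cdot E_l)$, and nonincreasing in $l$; monotonicity in $\epsilon$ is arranged in the same spirit (and for the paper's use only the pairs $(l,\epsilon_l)$ with $(\epsilon_l)$ decreasing occur, so a recursive minimum along that sequence already suffices). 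With that convention, your restriction argument — surjectivity of the restricted relation onto $E_l$ and onto its image, plus $\epsilon'\le\epsilon$ — is complete and proves the lemma.
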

Indeed, if you know how to lift a lot of points, with a strict condition $\epsilon$, you can do it for an easier situation.

For the rest of our construction, we further choose once and for all a decreasing sequence $(\epsilon_l)_{l\geq 1}$ of positive real numbers converging to $0$. We will only use the sequence $I(l,\epsilon_l)$ and we fix the choice of the lifts $K_{k,l}:=K_{k,l\epsilon(l)}\subset X_k$ as in Definition \ref{def:K_ile}. As the relations $\mathcal R_{k,l,\epsilon_l}$ project surjectively on each factor, we fix functions $E_l\to K_{k,l}$, for $k\in I(l,\epsilon_l)$, denoted $x^i\mapsto x^i_{k,l}$, such that we have $(x^i,x^i_{k,l})\in \mathcal R_{k,l,\epsilon_l}$. We furthermore assume, up to applying an isometry of $X_k$, that for each $k$ and $l$, we have $x^0_{k,l}$ is the origin $o_k$.

We define the function $\ell : \N \rightarrow \N\cup\{\infty\}$ by:
\begin{equation*}
\ell (k) := \max \{ l \in \N \, | \, k \in I(l, \epsilon _l)\}.
\end{equation*}
Note that $\ell(k)$ is in fact finite:
\begin{lemma}\label{lem:no-tree-in-Xi}
For every $k\in \N$, we have $\ell (k) <\infty$.
\end{lemma}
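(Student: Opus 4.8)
The plan is to argue by contradiction: assuming $\ell(k)=\infty$ for some fixed $k$, I would build an isometric copy of the whole minimal tree $T$ inside the single space $X_k=(\H^N,r_k^{-1}d,o_k)$, and then contradict the elementary fact that a finite-dimensional hyperbolic space contains no isometrically embedded (non-degenerate) tripod.

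First, I would unwind the hypothesis. By \Cref{mono_Il} and the monotonicity of the sequence $(\epsilon_l)$, the sets $I(l,\epsilon_l)$ are nested ($I(l+1,\epsilon_{l+1})\subset I(l,\epsilon_l)$), so $\ell(k)=\infty$ means exactly that $k\in I(l,\epsilon_l)$ for \emph{every} $l$. Hence, for each $l$, \Cref{def:K_ile} furnishes a map $E_l\to X_k$, $x^i\mapsto x^i_{k,l}$, normalised so that $x^0_{k,l}=o_k$, which on $\Gamma_l\cdot E_l$ is $\epsilon_l$-close to being isometric and $\Gamma_l$-equivariant; in particular, taking $\gamma^i=\gamma^{i'}=e$ in the first inequality of \Cref{def:K_ile} gives $|d_T(x^i,x^{i'})-d_{X_k}(x^i_{k,l},x^{i'}_{k,l})|\le\epsilon_l$ for all $x^i,x^{i'}\in E_l$.

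The next step is a limiting argument. Since $x^0_{k,l}=o_k$, each point $x^i_{k,l}$ lies in the closed ball of $X_k$ of radius $d_T(o,x^i)+\epsilon_1$ about $o_k$; here $k$ is \emph{fixed}, so $X_k$ is proper and these balls are compact, and a diagonal extraction over $i\in\N$ yields a subsequence along which $x^i_{k,l}\to y^i\in X_k$ for every $i$. Letting $l\to\infty$ in the inequalities of the previous paragraph (each pair $x^i,x^{i'}$ eventually lies in $E_l$, and $\epsilon_l\to0$), one sees that $x^i\mapsto y^i$ is an isometric embedding of the countable dense set $E$ into $X_k$; as $T$ is complete it extends to an isometric embedding $\iota\colon T\hookrightarrow X_k$, and the same passage to the limit promotes the $\Gamma_l$-equivariance to $d_{X_k}(\rho_k(\gamma)\iota(x),\iota(x'))=d_T(\rho_\infty(\gamma)x,x')$ for all $\gamma\in\Gamma$ and $x,x'\in T$.

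It remains --- and this is the heart of the argument --- to see that no such isometric embedding exists. In the situation we consider, $T$ is not a single geodesic, so it contains a genuine tripod: points $a,b,c$ whose median $m\in T$ is distinct from all three. An isometric embedding of a uniquely geodesic space into the uniquely geodesic space $\H^N$ sends geodesics to geodesics; thus $\iota m$ would lie on $[\iota a,\iota b]$ and on $[\iota a,\iota c]$, so $[\iota a,\iota m]$ is a common initial segment of both, and for each of these concatenations to be an actual geodesic the segments $[\iota m,\iota b]$ and $[\iota m,\iota c]$ must leave $\iota m$ in the direction opposite to $[\iota m,\iota a]$ --- hence in the same direction as each other; but then $[\iota m,\iota b]\cup[\iota m,\iota c]$ cannot be a geodesic, contradicting $\iota m\in[\iota b,\iota c]$. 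This contradiction forces $\ell(k)<\infty$. The only point requiring care is that $T$ genuinely branches; this is automatic in all the cases used in the paper (for a Schottky $\rho_\infty$ on $\F_r$ with $r\ge 2$ the tree $T$ is visibly far from linear), and in general one observes that a linear $T$ would, via the equivariance obtained above, force $\rho_k(\Gamma)$ to preserve a single geodesic of $\H^N$, which is incompatible with the divergence of the sequence $\rho_k$. Everything else is routine compactness bookkeeping.
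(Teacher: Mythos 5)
Your core argument is the same as the paper's: assume $\ell(k)=\infty$, use properness of the fixed space $X_k$ together with a diagonal extraction over $l$ to get limit points, pass to the limit in the distance inequalities of \Cref{def:K_ile} to obtain an isometric copy of $E$, hence of $T$, inside $X_k$, and conclude because a rescaled $\H^N$ contains no isometrically embedded nondegenerate tripod. The paper invokes this last fact without proof; your unique-geodesic/opposite-direction argument for it is correct, and the extra equivariance you carry through the limit is also correct (though not needed for the branching case). Up to there the proposal matches the paper and is sound.

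The step that fails is your patch for the case where $T$ does not branch. You assert that a linear $T$ would force $\rho_k(\Gamma)$ to preserve a geodesic of $\H^N$, ``which is incompatible with the divergence of the sequence $\rho_k$''. Divergence is a statement about the sequence as $k\to\infty$, whereas in this lemma $k$ is fixed, and preserving a geodesic is perfectly compatible with divergence: take $\Gamma=\Z$ and $\rho_k$ generated by a translation of length $r_k\to\infty$ along one fixed axis. Then every single group $\rho_k(\Gamma)$ preserves that axis, $T$ is a line, and the rescaled action of $\rho_k$ on its axis is exactly isometric (equivariantly) to the limit action on $T$; so the distance condition in \Cref{def:K_ile}, even with the equivariance you established, yields no contradiction whatsoever, and no argument using only that condition can. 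What saves the lemma in the non-branching case is the other half of \Cref{def:K_ile}, the kernel inequality \eqref{approx-hilbert}: for fixed $k$ and a pair of points of $E_l$ at definite $d_T$-distance, it forces the unrescaled distance between the lifts to be close to $r_k\, d_T(x,y)+\ln 2$, while the distance inequality forces it to lie within $r_k\epsilon_l$ of $r_k\, d_T(x,y)$; once $\epsilon_l$ (and the corresponding $\eta(\epsilon_l,\Gamma_l\cdot E_l)$, which one may take $\leq\epsilon_l$) are small enough these are incompatible. Alternatively, one can simply work, as the paper implicitly does, under the hypothesis that $T$ branches --- automatic when $\rho_\infty$ is Schottky on $\F_r$, $r\geq 2$, the case used in the main theorem --- and then your argument, like the paper's, is complete; but the general-case justification as you wrote it is not valid.
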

\begin{proof}
Let us assume, by contradiction that $\ell (k) = \infty$ for some $k\in \N$, ie. $k\in I(l, \epsilon _l)$ for every $l\in \N$. By the local compacity of $X_k$ (recall that $x_{k,l}^0=o_k$), we can assume that there is a sequence of integers $l_j$ tending to $\infty$ such that for every $i\in \N$, $\lim_{j \to \infty} x^i_{k, l_j} = x^i_{k, \infty} \in X_k$. The points $x^i_{k, \infty} \in X_k$ satisfy, for every $x^i, x^{i'} \in E$, 
$$
d_k(x^i_{k, \infty}, x^{i'}_{k, \infty}) = d_T(x^i, x^{i'}). 
$$
The subset $\overline{\{x^i_{k, \infty} \, , i\in \N\}} \subset X_k$ is therefore isometric to $T$, which is impossible since $X_k$ does not contain isometrically embedded non trivial tripods.
\end{proof}
We prove now that $\ell(k)$ goes to $\infty$ along the ultrafilter.
\begin{lemma}\label{l(i)-infini}
For every $l \in \N$, we have $\{k \in I \,|\, \ell (k) \geq l \} = I(l, \epsilon _l)$. 
In particular,  we have $\omega-\lim _{k \to \infty} \ell (k) = \infty .$
\end{lemma}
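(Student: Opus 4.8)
The plan is to simply unfold the definition of $\ell$ and feed it the monotonicity property \Cref{mono_Il}. Fix $l\in\N$. By definition $\ell(k)=\max\{m\in\N \mid k\in I(m,\epsilon_m)\}$, which is a well-defined finite integer: the set is nonempty since $I(0,\epsilon_0)=\N$, and bounded by \Cref{lem:no-tree-in-Xi}. I would first observe the inclusion $I(l,\epsilon_l)\subset\{k \mid \ell(k)\geq l\}$, which is immediate since $k\in I(l,\epsilon_l)$ witnesses $\ell(k)\geq l$. For the reverse inclusion, suppose $\ell(k)\geq l$; then there is some $m\geq l$ with $k\in I(m,\epsilon_m)$, and since $(\epsilon_m)$ is decreasing we have $\epsilon_m\leq\epsilon_l$, so \Cref{mono_Il} yields $I(m,\epsilon_m)\subset I(l,\epsilon_l)$ and hence $k\in I(l,\epsilon_l)$. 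This proves the claimed equality $\{k\mid \ell(k)\geq l\}=I(l,\epsilon_l)$.

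For the ``in particular'' clause, I would recall that $I(l,\epsilon_l)$ belongs to $\omega$ for every $l$ (this follows from \Cref{conv-ultra-gromov-haus-equiv} together with \Cref{prop-convergence}, as already noted right after \Cref{def:K_ile}). Combined with the equality just established, this shows that for every $l$ the set $\{k\mid \ell(k)\geq l\}$ lies in $\omega$, which is exactly the statement $\omegalim_{k\to\infty}\ell(k)=\infty$.

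The main ``obstacle'' here is that there is essentially none: the lemma is pure bookkeeping, resting entirely on the definition of $\ell$, the monotonicity \Cref{mono_Il}, and the already-recorded fact that each $I(l,\epsilon_l)$ is an element of the ultrafilter. The only point requiring mild care is making sure $\ell(k)$ is genuinely an integer, which is precisely the content of \Cref{lem:no-tree-in-Xi}.
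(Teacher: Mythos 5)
Your proof is correct and follows essentially the same route as the paper: both directions of the equality rest on the definition of $\ell$ together with the monotonicity of \Cref{mono_Il}, and the $\omega$-limit claim follows because each $I(l,\epsilon_l)$ belongs to the ultrafilter. Your extra remark that $\ell(k)$ is a well-defined integer via \Cref{lem:no-tree-in-Xi} is exactly how the paper justifies it as well.
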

\begin{proof}
Fix $l\in \N$.
By definition, we have $I(l, \epsilon _l) \subset \{k \in \N \,|\, \ell (k) \geq l \}$. For the reverse inclusion, from \Cref{mono_Il}, for any $l'\geq 0$, we have the inclusion $I(l+l', \epsilon _{l+l'}) \subset I(l, \epsilon _l)$ since $E_l$ is increasing and $\epsilon _l$ is decreasing.
The assumption $\ell (k) = l+l' \geq l$ means that $k\in I(l+l', \epsilon _{l+l'})$. By the previous remark, it implies that $k\in I(l, \epsilon _l)$.

The second part of the Lemma follows since $I(l, \epsilon _l) \in \omega$: the liminf, along $\omega$, of $\ell(k)$ is bigger than $l$ for any $l$.
\end{proof}

The previous \Cref{{lem:no-tree-in-Xi}} allows to take the biggest meaningful among the $K_{k,l}$'s in $X_k$, for any $k$. Indeed, we define the sequence of compact sets $K_k\subset X_k$ needed for Theorem \ref{thm:partial-hausdorff-convergence}:
\begin{equation*}
K_{k} := K_{k,\ell(k)} = \{x_{k,\ell(k)}^l,\, 0\leq l\leq \ell(k)\} \subset X_k.
\end{equation*}
The $K_k$'s are finite subsets of $X_k$ such that the hyperbolic kernel $\beta ^{t_k}$ between points in $\Gamma_{\ell(k)}\cdot K_k$
coincides up to $\eta (\epsilon _{\ell (k)}, \Gamma_{\ell(k)}\cdot E_{\ell (k)})$ with the hyperbolic kernel $\beta _s$ between related points of $\Gamma_{\ell(k)}\cdot E_{\ell (k)} \subset T$. 
We now construct the sequence of isometries $\mathcal I_k$.

\subsection{A sequence of approximations}\label{sec:approx}

There is an inclusion $E_{\ell (k)} \subset  E_{\ell (k')}$ when $\ell (k) \leq \ell (k')$. However {\it it does not imply the inclusion} $f_{t_k} (K_k) \subset f_{t_{k'}} (K_{k'}) $ since 
the sets $f_{t_k}(K_k)$ are not related as subsets of $\H ^{\infty}$. Indeed, $K_k$ and $K_{k'}$ are subsets of different spaces $X_k$ and $X_{k'}$. The information we have is that the kernel $\beta^{t_k}$ on $K_k$ is close to the kernel $\beta_s$ on $E_{\ell(k)}$ if $\ell(k)$ is big. 
Taking also into account the actions of $\Gamma$, we therefore use Proposition \ref{approxkernel} to send isometrically each $f_{t_k}(\Gamma_{\ell(k)}\cdot K_k)$ close to $F_s(\Gamma_{\ell(k)}\cdot E_{\ell(k)})$.

\begin{lemma}\label{estimate-mathcal-I}
For $k\in \N$, there exists an isometry $\mathcal I_{k} : \H^\infty\to \H^\infty$, verifying:
for every $0\leq l \leq \ell (k)$ and $\gamma\in\Gamma_{\ell(k)}$:
\begin{equation}\label{def:I_n-maps}
d(\mathcal I_{k} \circ f_{t_k} (\rho_k(\gamma)\cdot x_{k,\ell (k)}^l), F_s (\rho_\infty (\gamma)\cdot x^l)) \leq \epsilon _{\ell (k)}.
\end{equation}
In particular, for each $l\in \N$, the sequence $z_k^l := \mathcal I_k\circ f_{t_k} (x_{k,\ell (k)}^l)$ is defined for $k\in I(l,\epsilon_l)$ and its $\omega$-limit is $F_s(x^l)\in \H^\infty$. Moreover, for any $J\subset I(l,\epsilon_l)$ with $\lim_{k\in J} \ell(k)= \infty$, we have the convergence $\lim_{k\in J} z^l_k = F_s(x^l)$.
\end{lemma}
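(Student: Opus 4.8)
The plan is to read off the isometry $\mathcal I_k$ from \Cref{approxkernel}, applied to the separable tree $T$ on one side and to $X_k=(\H^N,r_k^{-1}d,o_k)$ on the other, along the finite set $\mathcal K:=\Gamma_{\ell(k)}\cdot E_{\ell(k)}\subset T$: the whole point of \Cref{def:K_ile} is that it produces precisely the data needed to run \Cref{approxkernel} with matching tolerances. Fix $k\in\N$; we may assume $r_k>s$, so that $t_k\in(0,1)$ and $\beta^{t_k}$ is a genuine kernel of hyperbolic type on $\H^N$ (only the asymptotic behaviour matters, so this is harmless). Since $\ell(k)=\max\{l:k\in I(l,\epsilon_l)\}$, we have $k\in I(\ell(k),\epsilon_{\ell(k)})$, so \Cref{def:K_ile} furnishes the lift $x^i\mapsto x^i_{k,\ell(k)}$ of $E_{\ell(k)}$ into $X_k$ together with its two estimates. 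Extending this $\Gamma_{\ell(k)}$-equivariantly (picking one representative for each point of $\mathcal K$) gives a map $\phi:\mathcal K\to X_k$ with $\phi(\rho_\infty(\gamma)\cdot x^i)=\rho_k(\gamma)\cdot x^i_{k,\ell(k)}$ for all $\gamma\in\Gamma_{\ell(k)}$ and $x^i\in E_{\ell(k)}$.

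I would then apply \Cref{approxkernel} with $(X_1,\beta_1)=(T,\beta_s)$, hence $f_1=F_s$, with $(X_2,\beta_2)=(X_k,\beta^{t_k})$, hence $f_2=f_{t_k}$, with the finite set $\mathcal K$ and with $\epsilon=\epsilon_{\ell(k)}$: the kernel inequality \eqref{approx-hilbert} in \Cref{def:K_ile} is exactly the assertion that $(\beta_s,\beta^{t_k})$ are $(\mathcal K,\phi,\eta(\epsilon_{\ell(k)},\Gamma_{\ell(k)}\cdot E_{\ell(k)}))$-close, i.e. that \eqref{hypothesisapproxkernel} holds with the threshold \Cref{approxkernel} requires. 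The proposition yields an isometry $\mathcal F$ of $\H^\infty$ such that $\mathcal F\circ F_s$ and $f_{t_k}\circ\phi$ are $\epsilon_{\ell(k)}$-close on $\mathcal K$; setting $\mathcal I_k:=\mathcal F^{-1}$ and composing with the isometry $\mathcal F^{-1}$ turns this into $d(\mathcal I_k\circ f_{t_k}(\phi(y)),F_s(y))\le\epsilon_{\ell(k)}$ for every $y\in\mathcal K$, which, once $y$ is written as $\rho_\infty(\gamma)\cdot x^l$, is precisely \eqref{def:I_n-maps}.

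For the remaining assertions I would specialise \eqref{def:I_n-maps} to $\gamma=e\in\Gamma_{\ell(k)}$, obtaining $d(z_k^l,F_s(x^l))\le\epsilon_{\ell(k)}$ for $0\le l\le\ell(k)$, with $z_k^l=\mathcal I_k\circ f_{t_k}(x^l_{k,\ell(k)})$. By \Cref{l(i)-infini}, $z_k^l$ is defined exactly when $\ell(k)\ge l$, i.e. for $k\in I(l,\epsilon_l)\in\omega$. Since $\omegalim_k\ell(k)=\infty$ (\Cref{l(i)-infini}) and $(\epsilon_l)$ decreases to $0$, we get $\omegalim_k\epsilon_{\ell(k)}=0$, whence $\omegalim_k z_k^l=F_s(x^l)$; and if $J\subset I(l,\epsilon_l)$ satisfies $\lim_{k\in J}\ell(k)=\infty$, the same monotonicity gives $\lim_{k\in J}\epsilon_{\ell(k)}=0$, hence $\lim_{k\in J}z_k^l=F_s(x^l)$.

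The step needing the most care --- the main obstacle, such as it is --- is turning the relational data underlying \Cref{def:K_ile} (equivariant Gromov--Hausdorff convergence, where $\mathcal R_{k,l,\epsilon}$ is only a relation and equivariance enters merely through distance estimates) into an honest embedding $\phi:\mathcal K\hookrightarrow X_k$, since \Cref{approxkernel} feeds $\phi$ into \Cref{total-n} and so needs the images $\phi(y)$, $y\in\mathcal K$, to be pairwise distinct. One handles this using the distance estimate in \Cref{def:K_ile} together with \Cref{lem:no-tree-in-Xi} (no isometrically embedded tripod in $X_k$) to keep the $\phi$-images separated; if the fixed dense set $E$ happens to contain pairs at distance $<\epsilon_{\ell(k)}$, one shrinks the sequence $(\epsilon_l)$ below half the minimal separation of each $\Gamma_l\cdot E_l$, and any residual discrepancy between $\phi(\rho_\infty(\gamma)\cdot x^i)$ and $\rho_k(\gamma)\cdot x^i_{k,\ell(k)}$ is absorbed by the quantitative continuity of $f_{t_k}$ from \Cref{QI1} (a point at $X_k$-distance $\le\delta$ from another has $f_{t_k}$-image moving by a quantity tending to $0$ with $\delta$, uniformly in $k$). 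Modulo this bookkeeping, the lemma is a direct unwinding of \Cref{approxkernel}.
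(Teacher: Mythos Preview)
Your proof is correct and follows essentially the same route as the paper: feed the kernel-closeness estimate \eqref{approx-hilbert} from \Cref{def:K_ile} into \Cref{approxkernel} on the finite set $\Gamma_{\ell(k)}\cdot E_{\ell(k)}$ to produce $\mathcal I_k$, then read off the convergence statements from \Cref{l(i)-infini}. Your final paragraph on the injectivity of $\phi$ (required for the dimension count in \Cref{total-n}) flags a bookkeeping point the paper's proof passes over silently; your proposed fix is adequate.
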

In the previous theorem, when $\ell(k)=0$, the condition on the isometry is almost empty: any isometry $\mathcal I_k$ fixing the origin $o = f_{t_k}(o_k)=f_{t_k}(x^0_{k,\ell(k)})$ does the trick. Note also that a $J$ as in \Cref{estimate-mathcal-I} exists: as $\omegalim_k \ell(k)=+\infty$, there is a subsequence $J\subset I(l,\epsilon_l)$ with $\lim_{k\in J} \ell(k)=+\infty$. But this subset may not belong to the ultrafilter, so the two convergence statements are different. The condition on $J$ is explicit enough to be actually checked in examples, see \Cref{ex:mcmullen3}.

\begin{proof}
Fix $k\in \N$. By construction, Eq. \eqref{approx-hilbert} holds for any $1\leq l, l' \leq \ell (k)$, and $\gamma^l$, $\gamma^{l'}$ in $\Gamma_{\ell(k)}\cdot K_k$. This states that the kernels $\beta_s$ on $\Gamma_{\ell(k)}\cdot E_{\ell(k)}$ and $\beta^{t_k}$ on $\Gamma_{\ell(k)}\cdot K_k$ are $\eta(\epsilon_l,\Gamma_{\ell(k)}\cdot E_{\ell(k)})$-close.
The Proposition \ref{approxkernel} then implies the existence of an isometry
$\mathcal I_{k} : \H ^\infty \to \H ^\infty$ with the desired property.

The last sentence follows from the fact that $z^l_k$ is defined as soon as $\ell(k)\geq l$, or equivalently $k\in I(l,\epsilon_l)$ and the two convergence statement are granted by the first point: the $\omega$-limit holds as we have $I(l,\epsilon_l)$ belongs to the ultrafilter and $\omegalim_k \ell(k)=+\infty$. The second one follows directly from the inequality in the first point. 
\end{proof}

The sequence $\mathcal I_k$ given by the previous Lemma verifies the conditions of Theorem \ref{thm:partial-hausdorff-convergence}:
\begin{proof}[Proof of Theorem \ref{thm:partial-hausdorff-convergence}]
Fix a choice of $\mathcal I_k$  as in Lemma \ref{estimate-mathcal-I} and define $$\phi _k := \mathcal I _k \circ f_{t_k} : \H^N \hookrightarrow \H^\infty.$$ For coherence of notations with \Cref{def:partial-Hausdorff-convergence}, we denote here by $\phi$ the map $\phi:=F_s : T \hookrightarrow \H^\infty.$ 
Define also (recall the representations $\nu_t$ and $\mu_s$ from \Cref{sec:kernels-examples}) the following representations of $\Gamma$ into $\Isom\, \H^\infty$: $$R_k := \mathcal I _k \circ (\nu _{t_k} \circ \rho _k) \circ \mathcal I _k ^{-1} \quad \textrm{ and }\quad R_\infty := \mu _s \circ \rho_\infty.$$ By construction, $\phi_k$ and $\phi$ are equivariant: for $\gamma \in \Gamma$, we have 
$$\phi _k \circ \rho _k (\gamma) = R_k (\gamma) \circ \phi _k\quad \textrm{ and }\quad \phi \circ \rho _\infty (\gamma) = R_\infty (\gamma) \circ \phi.$$

We prove the conditions of  \Cref{def:partial-Hausdorff-convergence} of partial equivariant Hausdorff convergence, for the embedding $\phi$ and the sequence $\phi_k$ using the sequence of sets
$$
K_{k} = \{x_{k,\ell (k)} ^l\, | \, 0\leq l \leq \ell(k)\}.
$$ 
{\bf Hausdorff convergence along $\omega$:} By \Cref{estimate-mathcal-I}, any point $\phi(x^l)$ in $\phi(E)$ is the $\omega$-limit of the sequence $z^l_k:=\phi_k(x_{k,\ell (k)} ^l)$, defined for $k\in I(l,\epsilon_l)$.
Hence, we have
$\phi (E) \subset \overline{\{\omegalim \phi_k(x_k),\; \textrm{ for sequences }(x_k)\textrm{ with }x_k\in K_k\}}$; by continuity of $\phi$, we get a first inclusion:
$$
\phi (T) \subset  \overline{\{\omegalim \phi_k(x_k),\; \textrm{ for sequences }(x_k)\textrm{ with }x_k\in K_k\}}.
$$
We now prove the reverse inclusion. Consider  
$$z\in \overline{\{\omegalim \phi_k(x_k),\; \textrm{ for sequences }(x_k)\textrm{ with }x_k\in K_k\}}.$$
Let us write $z= \omegalim \phi_k \left(x_{k,\ell (k)}^{l_k}\right)$ for a sequence of integers $1\leq l_k \leq \ell (k)$.
We have by Lemma \ref{estimate-mathcal-I}:
$$
d\left(\phi_k \left(x_{k,\ell (k)}^{l_k}\right), 
\phi \left(x^{l_k}\right) \right) \leq \epsilon _{\ell(k)}
$$
where $\phi \left(x^{l_k}\right) \in \phi (E) \subset \phi (T)$ and $\omegalim \epsilon _{\ell(k)} =0$ as $\omegalim \ell(k) = \infty$.
Therefore, we obtain at the limit
$d(z, \phi(T)) =0$. It implies that $z\in \phi (T)$ since $\phi (T)$ is closed as a direct consequence of the Proposition \ref{QI2}. 
This gives the reverse inclusion and ends the proof of the first part of the Definition \ref{def:partial-Hausdorff-convergence}.

\noindent
{\bf Equivariance along $\omega$:} Consider $x\in T$ and a sequence $x_k \in K_k$ such that $\phi (x) = \omegalim_k \phi _k (x_k)$. Fix $\epsilon >0$. From the density of $E$ in $T$ and the continuity of $\phi$, there exists $x^l \in E$  such that $d(\phi (x), \phi (x^l)) \leq \epsilon$. On the other hand, since $\phi (x) = \omegalim_k \phi _k (x_k)$, the set 
$I_\epsilon:= \{k\in \N,\, | \, d(\phi (x), \phi_k (x_k)) \leq \epsilon \}$ belongs to $\omega$. By the triangular inequality and Lemma \ref{estimate-mathcal-I}, we get for every $k\in I_\epsilon\cap I(l,\epsilon_l)$:  
$$d(\phi_k (x_k), \phi _k ( x^l_{k, \ell(k)})) \leq 2\epsilon + \epsilon_{\ell(k)}.$$ 
By equivariance of $\rho_k$, for every $\gamma \in \Gamma$, we have:
\begin{equation*}
d\left(\phi_k \left(\rho_k (\gamma) x_k\right), \phi_k \left(\rho _k (\gamma) x^l_{k,\ell (k)})\right) \right) \leq 2\epsilon + \epsilon_{\ell (k)}.
\end{equation*}
Moreover, if $\gamma \in \Gamma_{\ell(k)}$, \Cref{estimate-mathcal-I} gives:
$$ d\left(\phi_k \left(\rho_k (\gamma)
x^l_{k,\ell(k)}\right), \phi \left(\rho_\infty (\gamma) x^l\right)\right) \leq \epsilon_{\ell(k)}.$$
Therefore, for $k$ verifying: $k\in I_{\epsilon}$, $k\in I(l,\epsilon_l)$ and $\gamma \in  \Gamma_{\ell(k)}$, we have
\begin{align*}
d\left(\phi_k \left(\rho_k (\gamma) x_k\right), \phi \left(\rho _\infty (\gamma)  x\right) \right)  &\leq 
d\left(\phi_k \left(\rho_k (\gamma) x_k\right), \phi_k \left(\rho_k (\gamma)
x^l_{k,\ell(k)}\right)\right) \\
& \phantom{xxxxx} + d\left(\phi_k \left(\rho_k (\gamma)
x^l_{k,\ell(k)}\right), \phi \left(\rho_\infty (\gamma) x^l\right)\right) \\
& \phantom{xxxxx}+d\left(\rho_\infty (\gamma) x^l, \rho_\infty (\gamma) x \right),\\
&\leq  3\epsilon + 2\epsilon_{\ell (k)}.
\end{align*}
The three previous conditions on $k$ are in the ultrafilter, $\epsilon_{\ell(k)}$ goes to zero along the ultrafilter by \Cref{l(i)-infini}, hence the set of $k$ such that $$d\left(\phi_k \left(\rho_k (\gamma) x_k\right), \phi \left(\rho _\infty (\gamma)  x\right) \right)\leq 5\epsilon$$
belongs to the ultrafilter.
This ends the proof of Theorem \ref{thm:partial-hausdorff-convergence}.
\end{proof}

\subsection{Proof of \Cref{hausdimasymp}}\label{sec:hausdimasymp}

At this point, we have proven \Cref{thm:bowen-CAT(-1)} (and its ultrafilter version \Cref{thm:Bowen-omega}), as well as \Cref{thm:partial-hausdorff-convergence}. We conclude the proof of \Cref{hausdimasymp}. Let us restate it now that we defined the notion of $\omega$-limit:
\begin{theorem}\label{hausdimasymp-ultrafiltre}
Let $(\rho _k)_{k\in \N}$ be a diverging sequence in
$\Repr_{\H^N} (\F_{r})$ with minimal joint 
displacement $r_k$. Fix a non-principal ultrafilter $\omega$ on $\N$ and assume that the limiting action of $\rho _\infty (\F _r)$ on the minimal tree $T$ is Schottky. Then 
$$
\omegalim _k \; r_k \cdot\Hdim ( \Lambda _{\rho _k}) = \Hdim ( \Lambda _{\rho _\infty}).
$$
\end{theorem}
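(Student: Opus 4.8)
The plan is to execute the two-step strategy announced in the introduction. Fix the ultrafilter $\omega$, set $s=1$ and $t_k:=r_k^{-1}$, and apply \Cref{thm:partial-hausdorff-convergence}: it produces isometries $\mathcal I_k\in\Isom\,\H^\infty$ together with representations $R_k:=\mathcal I_k\circ(\nu_{t_k}\circ\rho_k)\circ\mathcal I_k^{-1}$ and $R_\infty:=\mu_1\circ\rho_\infty$ in $\Repr_{\H^\infty}(\F_r)$, such that, with base points $q_k:=\mathcal I_k(f_{t_k}(o_k))$ on the $R_k$-side and $F_1(o)$ on the $R_\infty$-side, one has $\omegalim_k\tau_{R_k}(g)=\tau_{R_\infty}(g)$ for every $g\in\F_r$. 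I would then check that $R_\infty$ is Schottky in $\Isom\,\H^\infty$: by equivariance of $F_1$ its orbit map is $\tau_{R_\infty}=F_1\circ\tau_{\rho_\infty}$, the composition of the $K$-quasi-isometric embedding $\tau_{\rho_\infty}$ (the Schottky hypothesis on $\rho_\infty$) with the $(1,\ln 2)$-quasi-isometric embedding $F_1$ of \Cref{QI2}, hence a quasi-isometric embedding, so $R_\infty\in\Schottkyspace_{\H^\infty}(\F_r)$. By \Cref{thm:quasi_isometric_open} it follows that $R_k$ is quasi-isometric for all $k$ in some element of $\omega$, and pulling this back through $\mathcal I_k\circ f_{t_k}$ (which is $1$-Lipschitz and bounded below by $t_k\,d$ by \Cref{QI1}) shows that $\rho_k$ is itself Schottky — with a constant depending on $k$ — for every such $k$; hence $\Lambda_{\rho_k}$, $\Hdim\Lambda_{\rho_k}$ and the Gibbs exponent $\delta_{\rho_k}$ of \Cref{prop:delta_rho} are defined for all indices relevant to the $\omega$-limit.

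The heart of the argument is then to combine \Cref{thm:Bowen-omega}, applied to $(R_k)$ and $R_\infty$, with an identification of the potentials $f_{R_k}$ and $f_{R_\infty}$. First I would record how Busemann functions are transported along the Monod–Py embeddings: the kernel identities $\cosh d(f_t x,f_t y)=(\cosh d(x,y))^t$ and $\cosh d(F_s x,F_s y)=e^{s\,d(x,y)}$ together with $\operatorname{arccosh}(u)=\ln(2u)+o(1)$ as $u\to\infty$ give $d(f_t a,f_t z)=t\,d(a,z)+(1-t)\ln 2+o(1)$ and $d(F_s a,F_s z)=s\,d(a,z)+\ln 2+o(1)$ as $d(a,z)\to\infty$, whence, letting $z$ tend to a boundary point $\eta$ (the additive constants cancelling in the difference of distances),
$$B_{\H^\infty}(f_t x,f_t y,\partial f_t\eta)=t\,B_{\H^N}(x,y,\eta)\qquad\text{and}\qquad B_{\H^\infty}(F_1 x,F_1 y,\partial F_1\eta)=B_T(x,y,\eta).$$
Combining this with the equivariance relations $\tau_{R_k}(\zeta)=\mathcal I_k(\partial f_{t_k}(\tau_{\rho_k}(\zeta)))$ and $R_k(\sigma_1)\cdot q_k=\mathcal I_k(f_{t_k}(\rho_k(\sigma_1)o_k))$ (and their $R_\infty$-analogues), together with the $\Isom\,\H^\infty$-invariance of $B$, the definition \eqref{frho} of the potential — computed on the $R_k$-side with the base point $q_k$ and on the $R_\infty$-side with $F_1(o)$ — yields exactly $f_{R_k}=t_k\,f_{\rho_k}$ and $f_{R_\infty}=f_{\rho_\infty}$ on $\partial\F_r$, where $f_{\rho_k}$ is computed with the base point $o_k\in\H^N$ and $f_{\rho_\infty}$ with $o\in T$.

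By \Cref{gibbs}, scaling a H\"older potential by a positive constant scales the associated exponent by its inverse: the pair $(m_{f_{\rho_k}},\,\delta_{\rho_k}/t_k)$ satisfies the characterising relation \eqref{gibbsmeasure} for the potential $t_kf_{\rho_k}=f_{R_k}$, hence equals $(m_{f_{R_k}},\delta_{R_k})$ by uniqueness, so $\delta_{R_k}=r_k\,\delta_{\rho_k}$, and likewise $\delta_{R_\infty}=\delta_{\rho_\infty}$. Applying \Cref{prop:delta_rho} (with base points $q_k$, $F_1(o)$, $o_k$, $o$ respectively) turns this into Eq. \eqref{eq:relations-Hdim}: $\Hdim\Lambda_{R_k}=r_k\Hdim\Lambda_{\rho_k}$ and $\Hdim\Lambda_{R_\infty}=\Hdim\Lambda_{\rho_\infty}$. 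Since \Cref{thm:Bowen-omega} gives $\omegalim_k\Hdim\Lambda_{R_k}=\Hdim\Lambda_{R_\infty}$, we conclude
$$\omegalim_k r_k\Hdim\Lambda_{\rho_k}=\omegalim_k\Hdim\Lambda_{R_k}=\Hdim\Lambda_{R_\infty}=\Hdim\Lambda_{\rho_\infty},$$
which is the statement. The deep inputs — the Gibbs coding of limit sets, the extension of Bowen's theorem past local compactness, and the partial equivariant Hausdorff convergence — are already available from the previous sections and enter here only as black boxes; the main obstacle in this last step is a careful bookkeeping of base points across $f_{t_k}$, $\mathcal I_k$ and $F_1$ so that one obtains the \emph{exact} equality $f_{R_k}=t_k f_{\rho_k}$ (not merely cohomology of potentials), since this is precisely what makes the Gibbs exponents scale by exactly $r_k$ and produces the clean asymptotic with no spurious multiplicative constant.
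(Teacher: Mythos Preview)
Your proof is correct and follows the same overall two-step architecture as the paper: use \Cref{thm:partial-hausdorff-convergence} to produce $R_k,R_\infty$ with $\omega$-converging orbit maps, check that $R_\infty$ is Schottky via \Cref{QI2}, and invoke \Cref{thm:Bowen-omega}. Where you diverge is in establishing the relation $\Hdim\Lambda_{R_k}=r_k\Hdim\Lambda_{\rho_k}$ (and its $\infty$-analogue). The paper does this purely metrically: since $\phi_k$ is a $(1,\ln 2)$-quasi-isometry from $X_k=(\H^N,r_k^{-1}d)$ to $\H^\infty$, its boundary extension is bilipschitz for the visual metrics (in fact a similarity, as one checks from the kernel identity), so $\Hdim_{X_k}\Lambda_{\rho_k}=\Hdim\Lambda_{R_k}$; then the rescaling $d_{X_k}=r_k^{-1}d_{\H^N}$ gives $\Hdim_{X_k}\Lambda_{\rho_k}=r_k\Hdim\Lambda_{\rho_k}$. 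You instead go through the thermodynamic formalism: your Busemann transport identity $B_{\H^\infty}(f_tx,f_ty,\partial f_t\eta)=t\,B_{\H^N}(x,y,\eta)$ yields the exact relation $f_{R_k}=t_kf_{\rho_k}$ on $\partial\F_r$, and then the uniqueness in \Cref{gibbs} forces $\delta_{R_k}=r_k\delta_{\rho_k}$, whence the Hausdorff-dimension equality via \Cref{prop:delta_rho}. Your route requires first knowing that $\rho_k$ is Schottky (which you correctly deduce by pulling back $R_k$'s quasi-isometry through $f_{t_k}$), a step the paper's metric argument avoids. On the other hand, your computation makes transparent \emph{why} the factor is exactly $r_k$: it is the scaling of the potential, not merely a bilipschitz equivalence of boundary metrics. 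One small bookkeeping point: your potentials $f_{R_k}$ are computed at the moving base points $q_k=\phi_k(o_k)$, while \Cref{thm:Bowen-omega} is stated for a fixed $o$; since $q_k\to o$ along $\omega$ and Hausdorff dimension is base-point independent, this causes no trouble, but it is worth saying explicitly.
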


\begin{proof}
Consider the embeddings $\phi_k:\H^N\to\H^\infty$ given by \Cref{thm:partial-hausdorff-convergence} applied to the sequence $\rho_k$ (with the choice $s=1$): this sequence realizes an equivariant partial Hausdorff convergence of $\phi_k(\H^N)$ to $F_1(T)$. Recall that the representations $R_k$ and $R_\infty$ from $\F_r$ to $\Isom\,\H^\infty$ coming from the Theorem \ref{thm:partial-hausdorff-convergence} are defined, for all $\gamma\in \F_r$, by: $$R_k(\gamma) = \mathcal I_k \circ (\nu _{t_k} \circ \rho_k(\gamma))\circ\mathcal I_k^{-1} \quad \textrm{ and } \quad 
R_\infty (\gamma) = \mu_1 \circ \rho _\infty (\gamma).
$$
We have to prove the following:
\begin{enumerate}
    \item The orbit maps $\tau_{R_k}$ converge pointwise along the ultrafilter to $\tau_{R_\infty}$;
    \item  $R_\infty$ is Schottky;
    \item for all $k$ we have: $r_k\cdot\Hdim \Lambda_{\rho_k} =  \Hdim \Lambda_{R_k}$ and $\Hdim \Lambda_{\rho_\infty} =  \Hdim \Lambda_{R_\infty}$.
\end{enumerate}
Indeed, the two first points allow to apply the Theorem \ref{thm:Bowen-omega} to the family $(R_k)_{k\in\N}$; it gives that $\omegalim_k \Hdim \Lambda_{R_k} = \Hdim \Lambda_{R_\infty}$.
Then the last point gives the convergence of \Cref{hausdimasymp-ultrafiltre}.

{\bf Proof of point 3:} By definition, $\Hdim \Lambda_{\rho_k}$ is computed using the visual distance for the usual hyperbolic space $\H^N$. But $X_k$ is the renormalization of $\H^N$ by a factor $\frac{1}{r_k}$. So the Hausdorff dimension $\Hdim_{X_k} (\Lambda_{\rho_k})$ for the visual distance of $X_k$ is $\Hdim_{X_k} (\Lambda_{\rho_k})=r_k\Hdim \Lambda_{\rho_k}$. Then, by construction, $\phi_k$ is a $(1,\ln(2))$)-quasi-isometry from $X_k$ to $\H^\infty$ and extends as a Lipschitz map on boundaries. Moreover, by construction, $\phi_k(\Lambda_{\rho_k})=\Lambda_{R_k}$. It implies that $$\Hdim_{X_k} (\Lambda_{\rho_k}) = \Hdim \Lambda_{R_k}.$$
We have proven $r_k\cdot \Hdim \Lambda_{\rho_k} =  \Hdim \Lambda_{R_k}$. The case of $\rho_\infty$ and $R_\infty$ is handled similarly, as $F_1$ gives a Lipschitz homeomorphism between $\Lambda_{\rho_\infty}$ and $\Lambda_{R_\infty}$.

{\bf Proof of point 1:} According to Theorem \ref{thm:partial-hausdorff-convergence}, for every $x\in T$ and $x_k \in K_k$ such that $\omegalim _k \phi_k (x_k) = F_1 (x)$, the orbit $R_k (\gamma) \phi _k (x_k)$ converges to $R_\infty (\gamma) F_1 (x)$. Choose $x=o$ and recall that we chose $F_1 (o) =o$. Choose the sequence $x_k:=o_k\in K_k$; by construction the $\omega$-limit of $\phi _k (o_k)$ is $o$. For any $\gamma\in \Gamma$, we have $$\omegalim R_k(\gamma)\cdot o = \omegalim R_k(\gamma)\cdot \phi_k(o_k) = R_\infty(\gamma)\cdot o.$$ Therefore the sequence of orbit maps $\tau_{R_k}$ converges to $\tau _{R_\infty}$ along the ultrafilter.

{\bf Proof of point 2:} It is a direct consequence of the fact that, by assumption, $\rho _\infty$ is a Schottky representation on $T$, and of the Proposition \ref{QI2}.
\end{proof}

We have now completed the proof of \Cref{hausdimasymp} and our main objective in this paper. 

We exhibit in the next section an actual subsequence along which $R_k$ converge to $R_\infty$ in $\Repr_{\H^\infty}(\F_r)$.

\subsection{Convergent subsequences of representations}\label{sec:convergent_subsequences}

Let us come back to the end of the proof of \Cref{thm:partial-hausdorff-convergence}, in the case of a general finitely generated group $\Gamma$. We use freely in this section the notations of Sections \ref{sec:lifts} and \ref{sec:approx}. The last part of the proof of \Cref{thm:partial-hausdorff-convergence} actually proves the following: for each $l\in \N$, $\gamma\in \Gamma$, for all $k$ such that $\ell(k)\geq l$ and $\gamma \in \Gamma_{\ell(k)}$, we have:
\begin{equation}\label{explisub}
d(R_k(\gamma)\cdot F_1(x^l), R_\infty(\gamma)\cdot F_1(x^l)) \leq 2 \epsilon_{\ell(k)}.
\end{equation}
This explicit estimates leads to the following construction of actual convergent subsequences of representations:
\begin{proposition}[Description of convergent subsequences]\label{prop:convergent_subsequences}
Let $J\subset \N$ be a subset such that $\lim_{k\in J} \ell(k)=+\infty$. Then we have:
\begin{itemize}
    \item The subsequence $(R_k)_{k\in J}$ converges to $R_\infty$ in $\Repr_{\H^\infty}(\Gamma)$;
    \item if moreover $\Gamma=\F_r$ and $R_\infty$ is Schottky, we have: $\lim_{k\in J} \Hdim \Lambda_{R_k} = \Lambda_{R_\infty}$.
\end{itemize}
\end{proposition}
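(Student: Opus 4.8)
The plan is to derive the first bullet from the explicit estimate \eqref{explisub}, and then to feed it into \Cref{coro:BowenAlgConv} for the second. Recall that convergence $R_k\to R_\infty$ in $\Repr_{\H^\infty}(\Gamma)$ means exactly that $R_k(\gamma)\cdot z\to R_\infty(\gamma)\cdot z$ for every $\gamma\in\Gamma$ and every $z\in\H^\infty$, so I fix $\gamma$ and aim at this statement. Since $\lim_{k\in J}\ell(k)=+\infty$ and $\gamma$ lies in some $\Gamma_m$, for $k\in J$ large we have $\ell(k)\ge m$, hence $\gamma\in\Gamma_{\ell(k)}$; then \eqref{explisub} gives $d(R_k(\gamma)\cdot F_1(x^l),R_\infty(\gamma)\cdot F_1(x^l))\le 2\epsilon_{\ell(k)}\to 0$ for every $l$, and the same holds with $\gamma$ replaced by $\gamma^{-1}$. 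Taking $l=0$ (recall $x^0=o$ and $F_1(o)=o$), the displacements $d(o,R_k(\gamma)\cdot o)$ and $d(o,R_k(\gamma^{-1})\cdot o)$ stay bounded as $k\to\infty$ in $J$. So $R_k(\gamma)$ converges pointwise on the countable dense subset $F_1(E)=\{F_1(x^l)\}_l$ of $F_1(T)$.

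The difficulty is that $F_1(T)$, although total in the Minkowski model of \Cref{H-infini} (the ``total hyperbolic image'' property of \Cref{thm:MonodPy}), is not metrically dense in $\H^\infty$, so this pointwise convergence does not propagate to all of $\H^\infty$ for free. To bridge this I would work in the hyperboloid model: identify a point $z\in\H^\infty$ with its Minkowski vector $v_z$, so that $B(v_o,v_z)=\cosh d(o,z)$, hence $\|v_z\|^2=2\cosh^2 d(o,z)-1$, and $B(v_z,v_w)=\cosh d(z,w)$; each $R_k(\gamma)$ extends to a Minkowski-linear isometry with $v_{R_k(\gamma)\cdot z}=\widetilde{R_k(\gamma)}\,v_z$. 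Fix $z$ and put $w_k:=R_k(\gamma)\cdot z$. Since $d(o,w_k)=d(R_k(\gamma^{-1})\cdot o,z)\to d(R_\infty(\gamma^{-1})\cdot o,z)=d(o,R_\infty(\gamma)\cdot z)$, the $v_{w_k}$ are Hilbert-bounded and $\|v_{w_k}\|\to\|v_{R_\infty(\gamma)\cdot z}\|$. For each $l$, using that $R_k(\gamma)$ is an isometry and that $R_k(\gamma)\cdot F_1(x^l)\to R_\infty(\gamma)\cdot F_1(x^l)$, one gets $d(w_k,R_\infty(\gamma)\cdot F_1(x^l))\to d(z,F_1(x^l))=d(R_\infty(\gamma)\cdot z,R_\infty(\gamma)\cdot F_1(x^l))$, i.e. $B(v_{w_k},v_{R_\infty(\gamma)\cdot F_1(x^l)})\to B(v_{R_\infty(\gamma)\cdot z},v_{R_\infty(\gamma)\cdot F_1(x^l)})$. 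Because $\{R_\infty(\gamma)\cdot F_1(x^l)\}_l$ is again total (image of a total set under a linear automorphism), the corresponding vectors, after lowering the index, span a dense subspace; so the bounded sequence $v_{w_k}$ converges weakly to $v_{R_\infty(\gamma)\cdot z}$, and together with the convergence of norms this upgrades to strong convergence $v_{w_k}\to v_{R_\infty(\gamma)\cdot z}$, that is $R_k(\gamma)\cdot z\to R_\infty(\gamma)\cdot z$ in $\H^\infty$. As $\gamma$ and $z$ are arbitrary, $R_k\to R_\infty$ in $\Repr_{\H^\infty}(\Gamma)$.

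For the second bullet, assume $\Gamma=\F_r$ and $R_\infty$ Schottky; then $R_\infty$ is a Schottky representation into the complete $\CAT(-1)$-space $\H^\infty$, and by the first bullet $(R_k)_{k\in J}\to R_\infty$ in $\Repr_{\H^\infty}(\F_r)$. Applying \Cref{coro:BowenAlgConv} to the sequence $(R_k)_{k\in J}$ gives $\lim_{k\in J}\Hdim\Lambda_{R_k}=\Hdim\Lambda_{R_\infty}$.

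The main obstacle is precisely the passage carried out in the second paragraph: one cannot conclude from a bare ``convergence on a dense subset'' argument, since $F_1(T)$ is total but not metrically dense in $\H^\infty$; the boundedness of the basepoint displacements $d(o,R_k(\gamma^{\pm1})\cdot o)$ — which is exactly what \eqref{explisub} buys — is what makes the weak-to-strong convergence argument in the Minkowski model work.
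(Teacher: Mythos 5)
Your proof is correct, and for the first bullet it takes a genuinely different route from the paper. Both arguments start from \eqref{explisub} and both have to face the same obstacle you name explicitly: $F_1(E)$ is total but not metrically dense in $\H^\infty$, so pointwise convergence on it does not propagate to all of $\H^\infty$ by a naive density argument. The paper resolves this softly: pointwise convergence on a total set defines a Hausdorff group topology on $\Isom\,\H^\infty$ (two isometries agreeing on a total set coincide), and by minimality of the Polish topology on $\Isom\,\H^\infty$ \cite[Corollary 1.7]{Duchesne} this convergence is equivalent to convergence in $\Isom\,\H^\infty$, hence in $\Repr_{\H^\infty}(\Gamma)$. You instead give a hands-on Hilbert-space argument in the hyperboloid model: the displacement bounds from \eqref{explisub} at $l=0$ (for $\gamma$ and $\gamma^{-1}$) give boundedness and convergence of the norms $\|v_{R_k(\gamma)z}\|$, the isometry property converts convergence on $F_1(E)$ into convergence of the pairings $B(v_{R_k(\gamma)z},\cdot)$ against the total family $\{v_{R_\infty(\gamma)F_1(x^l)}\}_l$, and boundedness plus a total family yields weak convergence, upgraded to strong convergence by Radon--Riesz; strong convergence of the vectors gives metric convergence since $\cosh d(z,w)=B(v_z,v_w)$. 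This is more elementary and self-contained (no appeal to the minimality of the Polish topology), at the price of invoking the standard fact that every isometry of $\H^\infty$ is induced by a bounded linear automorphism of the Minkowski space preserving the form (cf. \cite{BIM,DSU}; the paper itself builds isometries linearly in \Cref{approxhyperbolic}), and of the small remark — also needed and made in the paper — that $F_1(E)$, not merely $F_1(T)$, is total, which follows from density of $E$ in $T$ and continuity of $F_1$ and of the hyperboloid embedding. Your treatment of the second bullet (apply \Cref{coro:BowenAlgConv} to the convergent subsequence) is exactly the paper's.
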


\begin{proof}
The second point follows from the first one and \Cref{coro:BowenAlgConv}.

For the first point, we know that for each $\gamma\in \Gamma$, for each $x\in E$, the sequence $R_k(\gamma)\cdot F_1(x)$, for $k\in J$ converges to $R_\infty(\gamma)\cdot F_1(x)$ by \Cref{explisub}.
Note that $E$ is dense in $T$ and $F_1(T)$ is total in $\H^\infty$, so $F_1(E)$ is also total. Let us prove that this implies the first point.

If two elements of $\Isom\, \H^\infty$ coincide on a total subspace, they are equal. In other terms, the topology on $\Isom \H^\infty$ defined by the simple convergence on $F_1(E)$ is Hausdorff. Then, by minimality of the Polish topology on $\Isom\, \H^\infty$ \cite[Corollary 1.7]{Duchesne}, the sequence $R_k(\gamma)$ converges to $R_\infty(\gamma)$ in $\Isom\, \H^\infty$ iff for all $z\in F_1(E)$, the sequence $R_k(\gamma)\cdot z$ converges to $R_\infty(\gamma)\cdot z$ in $\H^\infty$. This last condition is granted by Eq \eqref{explisub} along any subset $J$ such that $\lim_{k\in J} \ell(k) = +\infty$.
\end{proof}
Beware that in the Proposition, we cannot guarantee that there is such a subset $J$ which belongs to the ultrafilter. However, the description  is explicit enough to be actually used in examples.

\begin{example}[McMullen example, continued II]\label{ex:mcmullen3}
 In the case of \Cref{ex:mcmullen}, one can explicitly lift the limiting trivalent tree into the renormalized spaces $X_\theta := (\H^2,r_\theta^{-1}d,o)$ by taking the orbit of the origin by the groups generated by the three symmetries, and joining any neighbors by a geodesic segment.

For each $\theta$, this lift is isometric on each edge, and ever closer to an isometric embedding. Once all the choices made, and with straightforward notations, it translates in our setting in the fact that $\ell(\theta)\xrightarrow{\theta\to 0} +\infty$. In other terms, in this case, we do not need ultrafilter and directly get the limit:
$$\omegalim 4 |\log (\theta /2)| \Hdim \Lambda _{\rho _\theta} =
\Hdim \Lambda _{\rho _\infty} = 2\log 2.$$
This recovers \Cref{McMullenexample}.
\end{example}

\bibliographystyle{alpha}
\bibliography{biblio}

\end{document}